\newtheorem{theorem}{Theorem}[section]
\newtheorem{lem}[theorem]{Lemma}
\newtheorem{prop}[theorem]{Proposition}
\newtheorem{cor}[theorem]{Corollary}
\theoremstyle{remark}
\newtheorem{remark}[theorem]{Remark}
\theoremstyle{definition}
\newtheorem{defin}[theorem]{Definition}
\def\R{\mathbb R}
\def\C{\mathbb C}
\def\d{\partial}
\newcommand{\ii}{\mathrm{i}\mkern1mu}
\def\QQ{\tilde{Q}_{b}}
\def\ZZ{\tilde{\zeta_b}}
\title[Blow-up of the Damped NLS Equation]{On the formation of singularities for the slightly supercritical NLS equation with nonlinear damping}
\author[P. Antonelli]{Paolo Antonelli}
\address{Gran Sasso Science Institute, viale Francesco Crispi, 7, 67100 L'Aquila, Italy}
\email{paolo.antonelli@gssi.it}
\author[B. Shakarov]{Boris Shakarov}
\address{Gran Sasso Science Institute, viale Francesco Crispi, 7, 67100 L'Aquila, Italy}
\email{boris.shakarov@gssi.it}
\begin{document}
\maketitle

\begin{abstract}
We consider the focusing, mass-supercritical NLS equation augmented with a nonlinear damping term. 
We provide sufficient conditions on the nonlinearity exponents and damping coefficients for finite-time blow-up.
In particular, singularities are formed for focusing and dissipative nonlinearities of the same power, provided that the damping coefficient is sufficiently small.
Our result thus rigorously proves the non-regularizing effect of nonlinear damping in the mass-supercritical case, which was suggested by previous numerical and formal results.
\newline
We show that, under our assumption, the damping term may be controlled in such a way that the self-similar blow-up structure for the focusing NLS is approximately retained even within the dissipative evolution.
The nonlinear damping contributes as a forcing term in the equation for the perturbation around the self-similar profile, that may produce a growth over finite time intervals. 
We estimate the error terms through a modulation analysis and a careful control of the time evolution of total momentum and energy functionals.
\end{abstract}

\section{Introduction}
In this work, we consider the NLS equation with nonlinear damping
\begin{equation} \label{eq:main}
 \begin{cases}
 & \ii \partial_t \psi + \Delta \psi + |\psi|^{2\sigma_1} \psi + \ii \eta |\psi|^{2\sigma_2} \psi = 0, \\ 
 & \psi(0) = \psi_0 \in H^1(\R^d),
 \end{cases}
\end{equation}
where $\psi: \R^+ \times \R^d \rightarrow \C$ and $\eta > 0$ is the damping coefficient. More precisely, our goal is to investigate the formation of singularities in finite time.
Equation \eqref{eq:main} arises as an effective model in various contexts, see for instance \cite{dyachenko1992optical, berge2009modeling, BaJaMa03}.
\newline
The nonlinear damping appearing in \eqref{eq:main} is usually introduced as a regularizing term of the singular dynamics provided by the focusing NLS equation. It is well known that the undamped NLS equation \eqref{eq:main} with $\eta=0$ and $\sigma_1\geq\frac{2}{d}$ may experience the formation of singularity in finite time, see \cite{Gl77, HoRo07}. From the modeling point of view, this means that the NLS effective description fails to be accurate close to the blow-up time and further effects, that were neglected in the derivation of the NLS equation, become relevant near the singularity. Several phenomena may be taken into account within the NLS description, see for instance \cite{DuLaSz16} for a quite general overview. In this work, we focus on nonlinear damping terms as in \eqref{eq:main}.
\newline
A relevant question in this perspective is to determine whether the nonlinear damping truly acts as a regularization in the vanishing dissipation regime. More precisely, given the singular dynamics for $\eta=0$, we are interested in determining whether the regularized equation \eqref{eq:main} has a global solution for any $\eta>0$, no matter how small.
\newline
The eventual (weak) limit as $\eta\to0$ of such solutions (if it exists) may be seen as a possible criterion to continue the solution beyond the singularity, in the same spirit of vanishing viscosity limits for conservation laws \cite{BiBr05}. To our knowledge, the only related rigorous results available in the literature are due to Merle \cite{merle1992uniqueness, merle1992limit}, where Hamiltonian-type regularization is adopted, see also \cite{Lan20} for a related result for the generalized KdV equation. On the other hand, several numerical simulations were performed to investigate this issue, see for instance \cite{SuSu07,Fi15,FiKl03, FiPa00} and the references therein.
\newline
The regularizing property of nonlinear damping is already established in some cases. For $\frac2d\leq\sigma_1<\sigma_2\leq\frac{2}{(d-2)^+}$ \cite{AnSp10,AnCaSp15} and for $\frac2d=\sigma_1=\sigma_2$ \cite{Da14}, the Cauchy problem \eqref{eq:main} is globally well-posed in $H^1$ for any $\eta>0$. On the other hand, there are also other cases where the damping does not act as a regularization and the dynamics remains singular for sufficiently small values of $\eta>0$. This is the case for instance of a linear damping $\sigma_2=0$ \cite{Ts84, OhTo09}. Moreover, in \cite{Da14} it is also shown that, for $0\leq\sigma_2<\sigma_1=\frac2d$, it is possible to determine an open set of initial data that develop a singularity in finite time. This is achieved by adapting the analysis developed in \cite{MeRa02,MeRa04,MeRa03,MeRa05,Ra05} by Merle and Rapha\"el for the mass-critical NLS, where they study the stability of blow-up in a self-similar regime.
\newline
In the physically relevant case $\frac2d<\sigma_1=\sigma_2\leq\frac{2}{(d-2)^+}$, this question remains unanswered in the general case. In \cite{AnCaSp15} the authors prove global well-posedness of \eqref{eq:main} in $H^1$ only for sufficiently large $\eta$, namely by requiring $\eta\geq\min(\sigma_1,\sqrt{\sigma_1})$. It is not clear whether this result is sharp, however numerical simulations \cite{SuSu07,Fi15,FiKl03} suggest that finite time blow-up still occurs for small values of $\eta$.
\newline
This work provides a rigorous answer to this question in the slightly supercritical case, confirming the numerical findings of \cite{FiKl03}. In particular, we prove that for $\frac2d<\sigma_1=\sigma_2<\frac2d+\delta^*$, where $\delta^*>0$ is sufficiently small, it is possible to provide an open set of initial data that develop a singularity in finite time in a self-similar regime. Our result shows that the self-similar blow-up regime studied in \cite{MeRaSz10} remains unaltered even under the action of the dissipative effects encoded in \eqref{eq:main}. In fact, we are going to prove a more general theorem on finite time blow-up for a larger class of nonlinear damping terms, see Theorem \ref{thm:mainBU1} below.
\newline
We first introduce the following notations. Let $s_c=s_c(\sigma_1)$ be the Sobolev critical exponent associated with $\sigma_1$,
\begin{equation}\label{eq:sc}
s_c = \frac{d}{2} - \frac{1}{\sigma_1},
\end{equation}
namely $s_c$ determines the critical regularity $\dot H^{s_c}$ for the well-posedness of equation \eqref{eq:main} with $\eta=0$. We also define
\begin{equation}\label{eq:sigma2}
\begin{aligned}
\sigma_* = \frac{2s_c}{d - 2 s_c} = s_c \sigma_1,\quad
\sigma^* = \frac{2}{d-2} - \sigma_1.
\end{aligned}
\end{equation}
Moreover, we also set 
\begin{equation}\label{eq:sigma2_max}
\sigma_{2, max}=\left\{\begin{array}{cc}
\sigma_1&\textrm{for}\,d\leq3,\\
\sigma^*&\textrm{for}\,d\geq4.
\end{array}\right.
\end{equation}
Our main result is stated as follows.
\begin{theorem}\label{thm:mainBU1}
There exists $\sigma_{crit}>\frac2d$, such that for any
$\frac2d<\sigma_1<\sigma_{crit}$, 
$\sigma_*<\sigma_2\leq\sigma_{2, max}$ the following holds true. There exists $\eta^*=\eta^*(\sigma_1)>0$ such that for any $0<\eta\leq\eta^*$,
there exists an open set $\mathcal O\subset H^1(\R^d)$ such that if $\psi_0 \in\mathcal O$ then the corresponding solution $\psi \in C([0,T_{max}),H^1(\R^d))$ to \eqref{eq:main} develops a singularity in finite time, that is $T_{max} < \infty$ and 
\begin{equation*}
	\lim_{t \to T_{max}} \| \nabla \psi(t)\|_{L^2} = \infty.
\end{equation*}
\end{theorem}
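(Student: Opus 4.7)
The plan is to adapt the modulation-theoretic framework of Merle--Rapha\"el--Szeftel for the slightly supercritical NLS to the presence of the nonlinear damping $i\eta|\psi|^{2\sigma_2}\psi$. The open set $\mathcal O$ will be defined as a small neighborhood (in $H^1$, with finite variance and appropriate orthogonality-type conditions) of an approximate self-similar profile $\lambda_0^{-1/\sigma_1}\tilde Q_{b_0}(\,\cdot\,/\lambda_0)e^{i\gamma_0}$, where $\tilde Q_b$ is the MRSz profile for the undamped slightly supercritical equation, $b_0$ is a fixed modulation parameter close to some admissible value, and $\lambda_0$ is small. The heuristic is that under the self-similar rescaling $\psi(t,x)=\lambda(t)^{-1/\sigma_1}v(s,y)$, $y=x/\lambda(t)$, $ds=dt/\lambda^2$, the damping term acquires a prefactor $\eta\lambda^{2-2\sigma_2/\sigma_1}$; the assumption $\sigma_2>\sigma_*$ together with smallness of $\eta$ keeps this term small compared with the leading self-similar dynamics as $\lambda\downarrow 0$.

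First I would perform the standard geometric decomposition
\[
\psi(t,x) = \frac{1}{\lambda(t)^{1/\sigma_1}}\bigl(\tilde Q_{b(t)} + \eps\bigr)\!\left(s,\frac{x-x(t)}{\lambda(t)}\right)e^{i\gamma(t)},
\]
where the modulation parameters $(\lambda,b,x,\gamma)$ are fixed by imposing four orthogonality conditions on $\eps$ against suitable directions in the generalized null space of the linearized operator around $\tilde Q_{b}$, thus eliminating the instability directions. Plugging this decomposition into \eqref{eq:main} and using the equation satisfied by $\tilde Q_{b}$ yields a system of modulation ODEs for $(\lambda_s/\lambda, b_s, \gamma_s, x_s)$ together with an evolution equation for $\eps$ in which the nonlinear damping appears as a forcing term of the schematic form $i\eta\lambda^{2-2\sigma_2/\sigma_1}|\tilde Q_b+\eps|^{2\sigma_2}(\tilde Q_b+\eps)$. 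The next step is to derive the standard relation $b_s+c_b b^2 \simeq 0$ (plus error) that drives the self-similar blow-up, now with additional contributions from the damping that must be estimated using the MRSz coercivity estimates on the linearized operator.

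The heart of the proof is a bootstrap argument on a set of pointwise-in-time estimates for $(b,\lambda,\eps)$: $b(t)$ remains close to $b_0$, $\lambda(t)\sim b_0(T-t)^{1/2}$, and $\eps$ stays bounded in the weighted norm used in MRSz10. To close the bootstrap I would use the monotonicity of a truncated local virial--energy functional adapted to $\tilde Q_b$, but modified to account for dissipation: since the mass and energy are no longer conserved, one must propagate weighted $L^2$ and $H^1$ bounds via the identities
\[
\frac{d}{dt}\|\psi\|_{L^2}^2 = -2\eta\|\psi\|_{L^{2\sigma_2+2}}^{2\sigma_2+2},\qquad
\frac{d}{dt}E(\psi) = -\eta\,\Re\!\int |\psi|^{2\sigma_2}\overline{\psi}\,(\Delta\psi+|\psi|^{2\sigma_1}\psi)\,dx,
\]
and show that the dissipation stays small in the rescaled variables under the hypothesis $\sigma_2>\sigma_*$. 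One obtains controlled growth of these functionals over the self-similar time window $s\in[s_0,\infty)$, which translates back to a finite blow-up time $T_{\max}<\infty$.

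The main obstacle will be the last step: the damping term is \emph{not} a lower-order perturbation in any conservative sense, and it breaks the Hamiltonian symmetries on which the MRSz argument is built. One must show that the new forcing terms in the $\eps$-equation and in the modulation equations can be absorbed by the coercivity of the linearized operator, and that the error terms generated by the non-conservation of mass/energy are integrable in the rescaled time $s$. This is where the combined smallness of $\eta\leq\eta^*(\sigma_1)$, the slight supercriticality $\sigma_1<\sigma_{crit}$, and the lower bound $\sigma_2>\sigma_*$ enter quantitatively. The upper bound $\sigma_2\leq\sigma_{2,max}$ is only needed to ensure local well-posedness in $H^1$ and standard energy estimates on the damping term. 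Once the bootstrap is closed, $\lambda(t)\to 0$ in finite time forces $\|\nabla\psi(t)\|_{L^2}\to\infty$, completing the proof.
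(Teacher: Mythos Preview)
Your approach is essentially the paper's: the MRSz self-similar profile $Q_b$, four orthogonality conditions on the remainder, the damping rescaled with prefactor $\eta\lambda^{2-2\sigma_2/\sigma_1}$, and a bootstrap closed via local virial and Lyapunov-type monotonicity while tracking the non-conservation of mass, energy and momentum.

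A few technical points where your explanation misidentifies the mechanism and would cause trouble in execution. First, the lower threshold $\sigma_2>\sigma_*$ is \emph{not} what makes $\lambda^{2-2\sigma_2/\sigma_1}$ small (that exponent is already nonnegative whenever $\sigma_2\le\sigma_1$); its role is to guarantee $s_c<\tfrac{d\sigma_2}{2\sigma_2+2}$, so that $\|\eps\|_{L^{2\sigma_2+2}}$ can be controlled by interpolation between $\dot H^s$ and $\dot H^1$ for some $s>s_c$ --- norms rougher than $\dot H^{s_c}$ are uncontrolled in the self-similar regime, and this is exactly where the damping contribution in the mass-flux estimate must be bounded. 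Second, the upper threshold $\sigma_2\le\sigma_{2,\max}$ is not about local well-posedness (which holds up to the energy-critical exponent) but about the Sobolev embedding $H^1\hookrightarrow L^{2(\sigma_1+\sigma_2+1)}$ needed to bound the time integrals in the energy and momentum balance laws. Third, the relevant $b$-dynamics is dynamical trapping near a fixed $b^*(s_c)>0$ (which you do state correctly in your bootstrap hypotheses), not $b_s+c_b b^2\simeq 0$, which is the mass-critical log-log picture. Finally, you must also control the momentum, not only mass and energy, since $(\eps,\ii\nabla Q_b)$ is one of the negative directions in the coercivity estimate; and no finite-variance assumption on the data is used.
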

A more precise statement of our blow-up result is provided later in Theorem \ref{thm:mainBU}. In particular, the explicit blow-up rate is given there.
As previously said, our proof follows the strategy developed in \cite{MeRaSz10} for the Hamiltonian dynamics, given by \eqref{eq:main} with $\eta=0$, which in turn exploited previous results by the same authors related to the formation of singularities in the mass-critical case \cite{MeRa03, MeRa04}. More precisely, we construct a set of initial data whose evolution is almost self-similar. By a fine control of the modulation parameters entering the description of the self-similar regime, it is then possible to show the occurrence of finite-time blow-up.
\newline
Let us emphasize that the introduction of a dissipative term in the dynamics introduces further mathematical difficulties. 
First of all, the self-similar profile we consider in our analysis is determined by the undamped equations, see also \eqref{eq:Qb1} below, for instance. At present, it is not even clear whether it would be possible to determine a profile that takes into account also the dissipative term. A generalized notion of dissipative solitons is present in the literature \cite{AfAkSo96, akhmediev2001solitons, HaIbMa21}, where the profiles are determined not only by the balance between dispersion and focusing effects but also between gain and loss terms. In \eqref{eq:main} the sole presence of a nonlinear damping cannot be balanced by other effects.
The fact that the self-similar profile is determined by the Hamiltonian part of the equation, yields a non-trivial forcing term in the equation for the perturbation, given by the nonlinear damping itself. 
Through a careful modulation analysis, we determine conditions under which this forcing can be controlled, so that the perturbation is shown to be sufficiently small with respect to the self-similar profile.
In particular, in the case $\sigma_1=\sigma_2$, the control is determined by imposing a smallness condition on $\eta>0$, as stated in the main theorem above.
\newline
Moreover, a second main difficulty is that in our case the functionals related to the physical quantities such as total mass, momentum and energy are - straightforwardly - not conserved in time anymore. We thus need a suitable control on the time evolution of these quantities that will in turn provide the necessary bounds on the perturbation and the modulation parameters.
\newline
For $d\geq4$, the restriction $\sigma_2\leq\sigma^*<\sigma_1$ prevents us to consider the case $\sigma_2=\sigma_1$. This is a technical condition, needed to ensure the validity of the Sobolev embedding $H^1(\R^d) \hookrightarrow L^{2(\sigma_1 + \sigma_2 + 1)}(\R^d)$ in \eqref{eq:en_grow}, \eqref{eq:momGrow} below, see also Section \ref{ch:bootstrap}. On the other hand, the condition $\sigma_*<\sigma_2$ is motivated by the fact that we cannot control Sobolev norms $\dot H^s$ norms with $s<s_c$ in the self-similar regime, see \eqref{eq:lowerS}. We remark that for the undamped dynamics \eqref{eq:main} with $\eta=0$, it was proved in \cite{MeRa08} that all radially symmetric blowing-up solutions leave the critical space $\dot H^{s_c}$ at blow-up time.
\newline
Let us further remark that the smallness condition $\eta\leq \eta^*$ is only necessary when $\sigma_2 = \sigma_1$. 
In the case $\sigma_1 > \sigma_2$, it is also possible to show that for any $\eta$, there exists a set of initial conditions depending on $\eta$, $\sigma_1$ and $\sigma_2$ whose corresponding solutions blow-up in finite time. We will further discuss this point in Section \ref{sec:concl} and throughout this work. \newline
Moreover, as it will be clear in our analysis, the smallness of $\eta^*$ is determined in terms of the smallness of $s_c$, which is related to $\sigma_1$ through identity \eqref{eq:sc}. Indeed, we will see that $\eta^*\sim s_c^3$. 
For this reason, with some abuse of notation, in what follows we often write $\eta^*(s_c)$ instead of $\eta^*(\sigma_1)$, where $s_c$ and $\sigma_1$ are related by identity \eqref{eq:sc}.\newline
Finally, our result provides additional evidence that the mass supercritical self-similar collapse is very different from the one occurring in the mass critical case. In the latter case, indeed, any damping coefficient $\eta>0$ regularizes the dynamics and prevents the formation of finite time blow-up \cite{Da14, AnSp10}. 
In Section \ref{sec:concl} we will provide an argument explaining why the solution escapes the self-similar regime in the case $\sigma_1 = \sigma_2 = \frac{2}{d}$ for any $\eta >0$.
 We will now present a road map of how Theorem \ref{thm:mainBU} will be proved.

\subsection{Singularity formation}

We start with an initial condition which can be decomposed as a soliton profile and a small perturbation 
\begin{equation}\label{eq:deco00}
\psi_0(x) = \lambda_0^{-\frac{1}{\sigma_1}} \left(Q_{b_0}\left( \frac{x-x_0}{\lambda_0} \right) + \xi_0\left( \frac{x-x_0}{\lambda_0} \right) \right) e^{\ii \gamma_0}, 
\end{equation}
where $0 < b_0, \lambda_0 \ll 1$ are small parameters, and $Q_{b_0}$ is roughly a localized radial solution to the stationary equation 
\begin{equation} \label{eq:Qb1}
\Delta Q_{b_0} - Q_{b_0} + \ii {b_0} \left(\frac{1}{\sigma_1} Q_{b_0} + x \cdot \nabla Q_{b_0}\right) + |Q_{b_0}|^{2\sigma_1}Q_{b_0} = 0.
\end{equation}
It is known (see, for instance, \cite{SuSu07, MeRaSz10}) that self-similar blowing-up solutions of the supercritical NLS focus as a zero energy solution to \eqref{eq:Qb1} plus a non-focusing radiation. For any initial value $Q_{b_0}(0)$ and any ${b_0}\in \R$, nontrivial solutions to \eqref{eq:Qb1} exist \cite{Wa90,BuChRu99}, but any zero energy solution does not belong to $L^2(\R^d)$ \cite{JoPa93} and thus we employ a suitable localization in space. Moreover, the parameter $b_0$ is chosen to be close to a value $b^* = b^*(s_c) > 0$. By continuity, we may find a time interval where the solution can be decomposed as 
\begin{equation}\label{eq:deco0}
\psi(t,x) = \frac{1}{\lambda^\frac{1}{\sigma_1}(t)} \left(Q_{b(t)}\left( \frac{x-x(t)}{\lambda(t)} \right) + \xi\left(t, \frac{x-x(t)}{\lambda(t)} \right) \right) e^{\ii \gamma(t)}
\end{equation}
where the parameter $b$ is still close enough to $b^*$ and $\lambda$ and $\xi$ are still small enough. By perturbation techniques, we choose the parameters $b,\lambda, x,\gamma$ so that $\xi$ satisfies four suitable orthogonality conditions. \newline
Next, we will use a local virial law and a suitable Lyapunov functional to prove that the parameter $b(t)$ is trapped around the value $b^*$ for all times. This yields the following law for the scaling parameter 
\begin{equation*}
	\lambda(t) \sim \sqrt{-2b^*t + \lambda_0^2}.
\end{equation*}
In particular, there exists a time $T_{max}(s_c,\lambda_0)>0$ such that $\lambda(t) \to 0$ as $t \to T_{max}$. Consequently, the kinetic energy of the solution diverges
\begin{equation*}
 \lim_{t \to T_{max}} \| \nabla \psi(t)\|_{L^2} = \infty. 
\end{equation*}
We will use the coercivity property stated in Proposition \ref{prp:quadForm} below to prove the dynamical trapping of the parameter $b$. In order to control the six negative directions on the right-hand side of \eqref{eq:quadform}, we will use four orthogonality conditions implied by the selection of the parameters $b,\lambda, x,\gamma$ and two almost orthogonal conditions which come from suitable bounds on the energy and the momentum of the solution. \newline 
In our case, there are two more difficulties that do not arise when $\eta = 0$. First, since $Q_b$ approximates a solution of the undamped NLS, one issue will be to show that the contribution of the damping term in \eqref{eq:main} can be considered of a smaller order with respect to the rest of the dynamics. in particular, the damping term generates a forcing in the equation of the remainder, which will be controlled using the smallness of $\eta$ and $\lambda$. Second, the presence of the damping implies that the energy 
\begin{equation*}
 E[\psi(t)] = \int \frac{1}{2} |\nabla \psi(t)|^2 - \frac{1}{2\sigma + 2}|\psi(t)|^{2\sigma + 2} \, dx
\end{equation*}
and the momentum
\begin{equation*}
 P[\psi(t)] = \big( \nabla \psi(t), \ii \psi(t) \big)
\end{equation*}
are not conserved. But to show the dynamical trapping of the parameter $b$ around $b^*$, we need $E$ and $P$ to remain small enough to control two negative directions in the coercivity \eqref{eq:quadform} below. In comparison with the undamped case, this is not a trivial consequence implied by the smallness of the energy and momentum of the initial condition. Thus, we will study their time evolution and show that they remain sufficiently small until the blow-up time under the assumption $d \leq 3$, $\sigma_2 \leq \sigma_1$ and $\eta \leq \eta^*(s_c)$ or $d \geq 4$ and $\sigma_2 < \sigma^*$. 
Finally, we recall that the localized profile $Q_b$ is close in $H^1(\R^d)$ to the ground state of the undamped NLS which is the unique real-valued solution $Q \in H^1(\R^d)$ \cite{Kw89, GiNiNi79} to the elliptic equation 
\begin{equation}\label{eq:gsQ}
 \Delta Q - Q + |Q|^{2\sigma_1}Q = 0.
\end{equation}
In light of the discussion above, Theorem \ref{thm:mainBU1} will be the consequence of the following result.

\begin{theorem} \label{thm:mainBU}
There exists $s_c^* >0$ such that for any $0 <s_c <s_c^*$ and $ \sigma_*< \sigma_2 \leq \sigma_1$ when $d \leq 3$ or $ \sigma_*< \sigma_2 < \sigma_{2,max}$ when $d \geq 4$, there exists $\eta^*(s_c)>0$ such that if $\eta \leq \eta^*$, then
there exists an open set $\mathcal{O} \subset H^1(\R^d)$ such that if $\psi_0 \in \mathcal{O}$, then for any $t \in [0,T_{max})$ the corresponding maximal solution to \eqref{eq:main} can be written as 
\begin{displaymath}
 \begin{aligned}
 &\psi(t,x) = \frac{1}{\lambda^\frac{1}{\sigma_1}(t)}\left(Q \left(\frac{x - x(t)}{\lambda(t)} \right) + \zeta \left(t,\frac{x - x(t)}{\lambda(t)} \right)\right) e^{\ii \gamma(t)},
 \end{aligned}
\end{displaymath}
where $\lambda,\gamma \in C^1([0, T_{max});\R)$, 
$x \in C^1([0, T_{max}),\R^d)$, 
\begin{equation*}
 \lim_{t \to T_{max}} \lambda(t) = 0, \quad \lim_{t \to T_{max}} x(t)= x_\infty \in \R^d,
\end{equation*}
there exists $0< \delta(s_c) = \delta \ll 1$ such that 
\begin{equation*}
 \|\nabla \zeta \|_{L^\infty([0,T_{max}),L^2)} \leq \delta,
\end{equation*}
and the blow-up rate is given by
\begin{equation*}
 \| \nabla \psi(t)\|_{L^2}^2
 \sim\left(T_{max}-t\right)^{-(1-s_c)}.
\end{equation*}
\end{theorem}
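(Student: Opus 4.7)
The plan is to prove Theorem \ref{thm:mainBU} by a modulated bootstrap argument in the spirit of Merle--Rapha\"el--Szeftel \cite{MeRaSz10}, adapted to the nonlinear damping. The starting point is the geometric decomposition: for initial data sufficiently close to a localized $Q_{b_0}$ as in \eqref{eq:deco00} with $b_0$ near $b^*(s_c)$, continuity of the $H^1$-flow together with an implicit function argument furnishes a maximal interval on which $\psi(t)$ admits the decomposition \eqref{eq:deco0} with $b(t)$ in a small neighbourhood of $b^*$. The four modulation parameters $(b,\lambda,x,\gamma)$ are uniquely determined by four orthogonality conditions on $\xi$, chosen along the directions tangent to the symmetries of \eqref{eq:main} (scaling, phase, translation) together with the $b$-tangent to $\{Q_b\}$. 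Differentiating these conditions along the flow and using \eqref{eq:Qb1} produces an ODE system for $(b,\lambda,x,\gamma)$ coupled to a PDE for $\xi$, in which the undamped part reproduces the system analyzed in \cite{MeRaSz10}, while the damping enters, after self-similar rescaling, as an explicit forcing of size $\eta\lambda^{2(\sigma_1-\sigma_2)/\sigma_1}$ (equal to $\eta$ in the critical case $\sigma_1=\sigma_2$, and $\lambda$-small otherwise).

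Next I would set up a bootstrap on $[0,T]$ by postulating quantitative smallness of $\|\xi\|_{H^1}$, $|b-b^*|$, $\lambda$, and of the no-longer-conserved functionals $E[\psi(t)]$ and $P[\psi(t)]$, with constants tuned so that each is strictly improved. The mechanism for controlling $\xi$ is the coercivity of Proposition \ref{prp:quadForm}: the linearized quadratic form around $Q_{b^*}$ is positive on $\xi$ up to six bad directions, of which four are killed by the orthogonality conditions and the remaining two are neutralized by the smallness of $E$ and $P$. Since these are not conserved, I would derive their evolution equations \eqref{eq:en_grow}, \eqref{eq:momGrow} directly from \eqref{eq:main}, estimate the damping contribution via Sobolev embedding $H^1\hookrightarrow L^{2(\sigma_1+\sigma_2+1)}$ (whence the restriction $\sigma_2\leq\sigma_{2,max}$ and the dichotomy $d\leq 3$ vs.\ $d\geq 4$), and integrate in time to keep $|E[\psi(t)]|+|P[\psi(t)]|$ of the order of their initial size throughout the regime.

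The core of the argument is the dynamical trapping of $b(t)$ near $b^*$ via a local virial identity together with a Lyapunov functional combining a weighted $L^2$ quantity, the scaling parameter $b$, and $\xi$-corrections; its approximate monotonicity forces $b(t)$ to remain in a shrinking neighbourhood of $b^*$. Once trapping is established, the $\lambda$-modulation equation yields $\dot\lambda\sim -b^*/\lambda$, hence $\lambda(t)\sim\sqrt{-2b^*t+\lambda_0^2}$, giving finite blow-up time $T_{max}$; integrability of $\dot x$ provides $x(t)\to x_\infty$, and the self-similar scaling gives $\|\nabla\psi\|_{L^2}^2\sim\lambda^{-2(1-s_c)}\sim (T_{max}-t)^{-(1-s_c)}$. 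Finally, since $Q_{b(t)}-Q$ is $O(b^*)$-small in $H^1$ for small $b^*$, absorbing it into the remainder converts the $Q_b$-decomposition into the $Q$-decomposition stated in the theorem, with $\zeta$ bounded by $\delta\ll 1$.

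The step I expect to be hardest is controlling the damping-generated forcing inside the Lyapunov derivative. In the critical case $\sigma_1=\sigma_2$ the rescaled damping coefficient is exactly $\eta$, with no compensating power of $\lambda$, so only the smallness $\eta\leq\eta^*\sim s_c^3$ is able to dominate the unfavourable contribution and preserve monotonicity of the Lyapunov functional; in the subcritical regime $\sigma_2<\sigma_1$ the intrinsic factor $\lambda^{2(\sigma_1-\sigma_2)/\sigma_1}$ provides a free gain and the argument closes for any $\eta>0$. The delicate combinatorial point is the joint calibration of $\eta^*$, the initial smallness of $\xi_0$, $E[\psi_0]$, $P[\psi_0]$, and the universal coercivity constants from \cite{MeRaSz10}, so that every bootstrap assumption is improved simultaneously.
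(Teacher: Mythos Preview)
Your proposal is correct and follows essentially the same route as the paper: modulation decomposition around $Q_b$ with four orthogonality conditions, a bootstrap (Proposition~\ref{thm:boot}) closed via the coercivity of Proposition~\ref{prp:quadForm}, dynamical trapping of $b$ near $b^*$ through a local virial lower bound plus a Lyapunov-type monotonicity, control of $E$ and $P$ via their balance laws \eqref{eq:en_grow}--\eqref{eq:momGrow}, and derivation of the self-similar rate from $\dot\lambda/\lambda\sim -b^*$.

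One technical point to flag: you write the bootstrap in terms of $\|\xi\|_{H^1}$, but $\xi$ is not small in $L^2$ (mass escapes the core), so the paper replaces the $L^2$-part by a localized norm $\int|\xi|^2e^{-|y|}\,dy$ together with a separate $\dot H^s$ control for some $s_c<s<\min\bigl(\tfrac{d\sigma_1}{2\sigma_1+2},\tfrac{d\sigma_2}{2\sigma_2+2},\tfrac12\bigr)$; the latter is what allows interpolation bounds on $\|\xi\|_{L^{2\sigma_j+2}}$ and on $P[\xi]$, and is also where the lower restriction $\sigma_2>\sigma_*$ enters. The upper bound on $\dot b$ is obtained not from the raw virial but from a refined version using the outgoing radiation $\tilde\zeta_b$ of Lemma~\ref{lem:rad} and the $L^2$-flux estimate through the annulus $A\le|y|\le 2A$; these ingredients combine into the functional $J$ in \eqref{eq:Jclean}. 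With these refinements your plan matches the paper.
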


More precisely, we will show that if we define 
\begin{equation}\label{eq:s_c}
 b^* = - \frac{\pi (1 + \delta)}{\ln(s_c)}
\end{equation}
then the law of the scaling parameter will satisfy the following bounds
\begin{equation}\label{eq:self-similare_speed}
\lambda^2(0) - 2(1 + \delta) b^* t \leq \lambda^2(t) \leq \lambda^2(0) -2(1 - \delta) b^* t.
\end{equation}

\begin{remark}
 Recently, in \cite{BaMaRa21}, the authors provided a complete description of a zero energy solution to \eqref{eq:Qb1} for $\sigma_1 \in \left( \frac{2}{d}, \frac{2}{d} + \varepsilon \right)$ and $\varepsilon$ small enough. 
\end{remark}

\section{Preliminaries}

We start this section with a list of notations that we are going to use throughout this work. \newline
 We use the symbol $A\sim B$, to indicate the fact that there exist two constants $C_1,C_2>0$ such that $C_1B \leq A \leq C_2B$.\newline
 For any $f \in H^1(\R^d)$, we define the operators
\begin{equation}\label{eq:LambdaD}
	\Lambda f = \frac{1}{\sigma_1} f + x \cdot \nabla f, \quad D f = \frac{d}{2} f + x \cdot \nabla f.
\end{equation}
We notice that
\begin{equation}\label{eq:Lambda-D}
 \Lambda f = D f - s_c f.
\end{equation}
Moreover, by integrating by parts we have 
\begin{equation}\label{eq:Lambdapp}
 (f, \Lambda g) = -2s_c(f,g) - (g, \Lambda f)
\end{equation}
and 
\begin{equation}\label{eq:LambLap}
 \Delta \Lambda f = 2 \Delta f + \Lambda \Delta f. 
\end{equation}
 We use the following notation 
\begin{equation*}
 \frac{1}{(d - 2)^+} = \begin{cases}
 \infty, \ \mbox{ for } \ d\leq 2, \\
 \frac{1}{(d - 2)}, \ \mbox{ for } \ d\geq 3. 
 \end{cases}
\end{equation*}
Next, we recall some preliminary results.

\begin{defin}\label{def:strich}
 We say that a pair $(q,r)$ is admissible if $2 \leq q, r \leq \infty$, $(q,r,d) \neq (2,\infty,2)$ and 
 \begin{equation*}
 \frac{2}{q} = d \left( \frac{1}{2} - \frac{1}{r} \right)
 \end{equation*}
\end{defin}

We will exploit the following Strichartz estimates \cite{GiVe85, KeTa98}.

\begin{theorem}\label{thm:strichartz}
For every $\phi \in L^2(\R^d)$ and every $(q,r)$ admissible, there exists a constant $C>0$ such that for any $t > 0$, 
\begin{displaymath}
\| e^{\ii t\Delta} \phi \|_{L^q((0,t),L^r)} \leq C \| \phi\|_{L^2}.
\end{displaymath}
Moreover, if $f \in L^{\gamma'}((s,t), L^{\rho'}(\R^d))$ where $(\gamma, \rho)$ is an admissible pair, and 
\begin{displaymath}
N(f) = \int_{s}^t e^{\ii (t-\tau) \Delta} f(\tau) \, d\tau,
\end{displaymath}
then there exists a constant $C>0$ such that for any $(q,r)$ admissible, we have
\begin{displaymath}
\| N(f)\|_{L^q((s,t), L^r)} \leq C \| f\|_{L^{\gamma'}((s,t), L^{\rho'})}.
\end{displaymath} 
\end{theorem}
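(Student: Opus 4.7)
The plan is to prove Theorem \ref{thm:strichartz} via the classical $TT^*$ duality argument, starting from the dispersive decay of the free Schrödinger propagator and upgrading it via Hardy--Littlewood--Sobolev (complemented by the Keel--Tao bilinear interpolation at the endpoint). Concretely, I would first establish the two fundamental bounds on $U(t)=e^{\ii t\Delta}$: the $L^2$ isometry $\|U(t)\phi\|_{L^2}=\|\phi\|_{L^2}$ (immediate from the Fourier representation, since the symbol has modulus one) and the dispersive estimate
\begin{equation*}
\|U(t)\phi\|_{L^\infty(\R^d)}\leq C\,|t|^{-d/2}\|\phi\|_{L^1(\R^d)},
\end{equation*}
which follows from the explicit convolution kernel $K_t(x)=(4\pi\ii t)^{-d/2}e^{\ii|x|^2/4t}$. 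Riesz--Thorin interpolation between these then gives $\|U(t)\phi\|_{L^r}\lesssim|t|^{-d(1/2-1/r)}\|\phi\|_{L^{r'}}$ for every $2\leq r\leq\infty$.

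Next, I would set $T\phi(t)=U(t)\phi$ and observe, by duality and Minkowski, that the homogeneous estimate $\|T\phi\|_{L^q_tL^r_x}\lesssim\|\phi\|_{L^2}$ is equivalent to $\|T^*F\|_{L^2}\lesssim\|F\|_{L^{q'}_tL^{r'}_x}$, which in turn is equivalent by squaring to
\begin{equation*}
\bigl\|TT^*F\bigr\|_{L^q_tL^r_x}=\Bigl\|\int U(t-s)F(s)\,ds\Bigr\|_{L^q_tL^r_x}\lesssim\|F\|_{L^{q'}_tL^{r'}_x}.
\end{equation*}
Plugging the dispersive estimate into the $x$-integral reduces this to the one-dimensional weak-type inequality
\begin{equation*}
\Bigl\|\int|t-s|^{-d(1/2-1/r)}\|F(s)\|_{L^{r'}_x}\,ds\Bigr\|_{L^q_t}\lesssim\|F\|_{L^{q'}_tL^{r'}_x},
\end{equation*}
which is precisely the Hardy--Littlewood--Sobolev inequality, valid whenever $2/q=d(1/2-1/r)$ and the exponents are strictly admissible (i.e., $q>2$). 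A standard dualization then yields the homogeneous bound $\|U(t)\phi\|_{L^q_tL^r_x}\lesssim\|\phi\|_{L^2}$ for every admissible pair in the non-endpoint range.

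For the inhomogeneous bound on $N(f)=\int_s^t U(t-\tau)f(\tau)\,d\tau$ with two possibly different admissible pairs $(q,r)$ and $(\gamma,\rho)$, I would first note that the untruncated operator $\tilde N(f)=\int_{-\infty}^{\infty}U(t-\tau)f(\tau)\,d\tau$ satisfies the desired bound by combining the homogeneous estimate with its dual. To pass to the retarded operator $N$ (which integrates only over $\tau<t$) for generic admissible exponents, I would invoke the Christ--Kiselev lemma, whose hypotheses are met as soon as $q>\gamma'$, a condition that holds outside the endpoint.

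The genuinely delicate step --- and the main obstacle --- is the endpoint $(q,r)=(2,\tfrac{2d}{d-2})$ for $d\geq 3$, which the restriction $(q,r,d)\neq(2,\infty,2)$ in Definition \ref{def:strich} specifically allows. Here Christ--Kiselev and Hardy--Littlewood--Sobolev both fail (the HLS exponent becomes the forbidden one), and one must replace the scalar $TT^*$ argument by the bilinear Keel--Tao method: decompose $TT^*$ dyadically according to $|t-s|\sim 2^k$, control each dyadic block by interpolating between the dispersive estimate and the energy estimate on a bilinear form, and sum the geometric series using the strict admissibility of the neighboring exponents. This delivers the endpoint simultaneously for the homogeneous estimate and for $N$, which completes the proof for all admissible $(q,r)$ and $(\gamma,\rho)$.
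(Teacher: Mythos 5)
The paper does not prove Theorem \ref{thm:strichartz} at all: it is quoted as a classical result with references to \cite{GiVe85, KeTa98}, and your outline is precisely the argument contained in those references (dispersive estimate plus $TT^*$ and Hardy--Littlewood--Sobolev for the non-endpoint range, Christ--Kiselev for the retarded operator when $q>\gamma'$, and the Keel--Tao bilinear interpolation at the endpoint). Your sketch is correct, including the observation that the only case escaping Christ--Kiselev is the double endpoint $q=\gamma=2$, so there is nothing to reconcile with the paper beyond noting that it delegates this proof entirely to the literature.
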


By using the Strichartz estimates, it is possible to prove the local existence of solutions to equation \eqref{eq:main} (see \cite[Proposition $2.3$]{AnCaSp15}).

\begin{theorem}\label{thm:locEx}
Let $\eta \in \R$, $\sigma_1, \sigma_2 < 2/(d-2)$ if $d \geq 3$. Then for any $\psi_0\in H^1(\R^d)$ there exists $T_{max}>0$ and a unique solution $\psi$ to \eqref{eq:main} such that for any $(q,r)$ admissible, we have
\begin{equation}\label{eq:strich_sol}
 \psi,\nabla\psi \in C\left([0,T_{max}),L^2(\R^d) \right) \cap L^q_{loc}\left((0,T_{max}), L^r(\R^d)\right).
\end{equation}
Moreover, either $T_{max} = \infty$ or $T_{max} < \infty$ and 
\begin{equation*}
 \lim_{t \to T_{max}} \| \nabla \psi(t) \|_{L^2} = \infty.
\end{equation*}
\end{theorem}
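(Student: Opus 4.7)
The plan is to prove this by a standard Banach fixed-point argument applied to the Duhamel formulation
\begin{equation*}
\psi(t) = e^{\ii t \Delta} \psi_0 + \ii \int_0^t e^{\ii(t-s)\Delta}\bigl(|\psi|^{2\sigma_1}\psi + \ii\eta|\psi|^{2\sigma_2}\psi\bigr)(s)\,ds,
\end{equation*}
combined with the Strichartz estimates of Theorem \ref{thm:strichartz}. Note that the damping term $\ii\eta|\psi|^{2\sigma_2}\psi$ plays exactly the same role as the focusing nonlinearity in the Duhamel iteration, since Strichartz bounds are insensitive to the complex constant in front of the nonlinearity.

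For each exponent $\sigma_j$, $j=1,2$, I would choose the admissible pair $(q_j,r_j)$ with $r_j = 2\sigma_j+2$, the corresponding $q_j$ being determined by $2/q_j = d(1/2-1/r_j)$. The assumption $\sigma_j < 2/(d-2)$ ensures $r_j < 2d/(d-2)$, so that the Sobolev embedding $H^1(\R^d)\hookrightarrow L^{r_j}(\R^d)$ holds. I would then work in the complete metric space
\begin{equation*}
X_T = \Bigl\{\psi \in C([0,T],H^1) : \psi,\nabla\psi \in L^{q_j}((0,T),L^{r_j}),\ j=1,2\Bigr\},
\end{equation*}
equipped with the norm $\|\psi\|_{X_T} = \|\psi\|_{L^\infty_T H^1} + \sum_{j=1,2}\bigl(\|\psi\|_{L^{q_j}_T L^{r_j}} + \|\nabla\psi\|_{L^{q_j}_T L^{r_j}}\bigr)$, restricted to a ball of radius $2C\|\psi_0\|_{H^1}$, with $C$ the Strichartz constant. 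The linear part is controlled by $\|e^{\ii t\Delta}\psi_0\|_{X_T}\leq C\|\psi_0\|_{H^1}$ uniformly in $T$. For the nonlinear terms, Hölder in time and space yields
\begin{equation*}
\bigl\||\psi|^{2\sigma_j}\psi\bigr\|_{L^{q_j'}_T L^{r_j'}} \leq T^{\alpha_j}\|\psi\|_{L^\infty_T L^{r_j}}^{2\sigma_j}\|\psi\|_{L^{q_j}_T L^{r_j}}
\end{equation*}
for some $\alpha_j>0$, where the $L^\infty_T L^{r_j}$ norm is bounded through Sobolev embedding by $\|\psi\|_{L^\infty_T H^1}$. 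The gradient of the nonlinearity is handled identically after writing $\nabla(|\psi|^{2\sigma_j}\psi)$ as a linear combination of terms of the form $|\psi|^{2\sigma_j}\nabla\psi$.

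For $T=T(\|\psi_0\|_{H^1},\eta,\sigma_1,\sigma_2)$ sufficiently small, the factor $T^{\alpha_j}$ makes the Duhamel map a contraction on the chosen ball, producing the unique fixed point; uniqueness in the class \eqref{eq:strich_sol} follows from the same Hölder--Strichartz argument applied to differences. Iterating the construction from any later time yields a maximal existence interval $[0,T_{max})$. For the blow-up alternative, one argues by contradiction: if $T_{max}<\infty$ but $\liminf_{t\to T_{max}}\|\nabla\psi(t)\|_{L^2}<\infty$, then along a suitable sequence the $H^1$ norm remains bounded, and the local theory provides a uniform existence time $T_*>0$ for initial data of that size, which would allow extension of the solution past $T_{max}$, a contradiction.

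The main technical hurdle is closing the gradient estimates when $\sigma_1\neq\sigma_2$, since the two nonlinearities may naturally require distinct admissible pairs: one must track simultaneously two pairs of Strichartz norms and verify that all the Hölder exponents match. The positivity of the exponents $\alpha_j$ and the compatibility of the Sobolev embeddings ultimately both rely on the subcriticality condition $\sigma_j < 2/(d-2)$ in an essential way.
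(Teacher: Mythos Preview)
Your proposal is the standard Kato-type contraction argument, and it is essentially what the paper invokes: the paper does not give its own proof but simply refers to \cite[Proposition~2.3]{AnCaSp15}, whose proof proceeds exactly via Strichartz estimates and a fixed point in the space you describe. So your approach matches.

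One small gap worth closing in your blow-up alternative: you pass from ``$\liminf_{t\to T_{max}}\|\nabla\psi(t)\|_{L^2}<\infty$'' directly to ``along a sequence the $H^1$ norm remains bounded''. This requires knowing that $\|\psi(t)\|_{L^2}$ cannot blow up on its own while $\|\nabla\psi(t)\|_{L^2}$ stays bounded. That follows from the mass identity \eqref{eq:mass_grow} together with Gagliardo--Nirenberg, $\|\psi\|_{L^{2\sigma_2+2}}^{2\sigma_2+2}\lesssim\|\nabla\psi\|_{L^2}^{d\sigma_2}\|\psi\|_{L^2}^{2+2\sigma_2-d\sigma_2}$, which yields a differential inequality for $M(t)=\|\psi(t)\|_{L^2}^2$ that, via Gronwall, keeps $M$ bounded on any finite interval where $\|\nabla\psi\|_{L^2}$ is bounded. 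With that remark added, your argument is complete.
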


The usual physical quantities (the total mass, the total energy and the momentum) are governed by the following time-dependent functions \cite{Ca03}. 

\begin{theorem}
 Let $\psi_0 \in H^1(\R^d)$ and $\psi \in C\left([0,T_{max}),H^1(\R^d)\right)$ the corresponding solution to \eqref{eq:main}. Then the total mass, the total energy and the momentum satisfy 
\begin{equation}\label{eq:mass_grow}
M[\psi(t)] = \int |\psi(t)|^2 \, dx = M[\psi_0] - 2\eta \int_0^t \| \psi(s)\|_{L^{2\sigma_2 + 2}}^{2\sigma_2 + 2} \, ds,
\end{equation}
\begin{equation}\label{eq:en_grow}
\begin{aligned}
E[\psi(t)] &= \int \frac{1}{2} |\psi(t)|^2 -\frac{1}{2\sigma_1 + 2} |\psi(t)|^{2\sigma_1 + 2} \, dx \\
& = E[\psi_0] + \eta \int_0^t \int | \psi |^{2\sigma_1 + 2\sigma_2 +2} - |\psi|^{2\sigma_2} |\nabla \psi|^2 \\ 
& - 2\sigma_2 |\psi|^{2\sigma_2 - 2} Re \left( \bar{\psi}\nabla \psi \right)^2 \, dx \, ds, 
\end{aligned}
\end{equation}
and
\begin{equation}\label{eq:momGrow}
 P[\psi(t)] = \big( \nabla \psi(t), \ii \psi(t) \big) = P[\psi_0] - 2\eta \int_0^t \int |\psi|^{2\sigma_2} Im ( \bar{\psi} \nabla \psi) \, dx \, ds, 
\end{equation} 
respectively. 
\end{theorem}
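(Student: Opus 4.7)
The plan is to derive each identity first for sufficiently regular solutions by direct differentiation and integration by parts, and then promote them to $H^1$ solutions by approximation. Throughout, the structure of the argument is identical to the standard one in Cazenave \cite{Ca03} for the conservative NLS; the only new contribution comes from the damping term $i\eta|\psi|^{2\sigma_2}\psi$, whose role in each identity is to produce the explicit dissipation/forcing integrals on the right-hand sides.

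At a smooth level, I would test \eqref{eq:main} against $\bar\psi$ and take the imaginary part of the spatial integral. The Laplacian and focusing terms contribute only real parts (after integration by parts), while $i\eta|\psi|^{2\sigma_2}\psi\cdot\bar\psi=i\eta|\psi|^{2\sigma_2+2}$ gives the dissipation, yielding $\tfrac12 \frac{d}{dt}\|\psi\|_{L^2}^2=-\eta\|\psi\|_{L^{2\sigma_2+2}}^{2\sigma_2+2}$, which integrates to \eqref{eq:mass_grow}. For the momentum I would test against $\nabla\bar\psi$ and take the real part; the dispersive and focusing nonlinear terms produce exact divergences with vanishing integrals (for the focusing one, using $\nabla|\psi|^{2\sigma_1+2}=(2\sigma_1+2)|\psi|^{2\sigma_1}\operatorname{Re}(\bar\psi\nabla\psi)$), while the damping contributes $-2\eta\int|\psi|^{2\sigma_2}\operatorname{Im}(\bar\psi\nabla\psi)\,dx$, giving \eqref{eq:momGrow}. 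For the energy I would test against $\partial_t\bar\psi$ and take the real part: the term $i\partial_t\psi\cdot\partial_t\bar\psi$ is purely imaginary and vanishes, the Laplacian and focusing pieces combine to $-\frac{d}{dt}E[\psi]$, and the damping contributes $-\eta\operatorname{Im}\int|\psi|^{2\sigma_2}\psi\,\partial_t\bar\psi\,dx$. To get a tractable form, I would then substitute $\overline{\partial_t\psi}=-i\Delta\bar\psi-i|\psi|^{2\sigma_1}\bar\psi-\eta|\psi|^{2\sigma_2}\bar\psi$ from \eqref{eq:main}. The last piece is real and drops out of the imaginary part; the focusing piece contributes $\int|\psi|^{2\sigma_1+2\sigma_2+2}\,dx$; and the Laplacian piece, after one integration by parts together with the identity $\nabla(|\psi|^{2\sigma_2})=2\sigma_2|\psi|^{2\sigma_2-2}\operatorname{Re}(\bar\psi\nabla\psi)$, produces exactly $-\int|\psi|^{2\sigma_2}|\nabla\psi|^2\,dx-2\sigma_2\int|\psi|^{2\sigma_2-2}\operatorname{Re}(\bar\psi\nabla\psi)^2\,dx$. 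Integrating in time yields \eqref{eq:en_grow}.

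The main obstacle is justifying these manipulations at the $H^1$ regularity level provided by Theorem \ref{thm:locEx}, since $\partial_t\psi$ a priori lives only in $H^{-1}$ and several integrands (e.g.\ $|\psi|^{2\sigma_2-2}(\operatorname{Re}\bar\psi\nabla\psi)^2$ when $\sigma_2<1$) must be interpreted with some care. The standard remedy is a regularization/approximation scheme: approximate $\psi_0$ in $H^1$ by $\psi_0^n\in H^2\cap H^1$ (or smoother), let $\psi^n$ be the corresponding solutions, which by the local theory and elliptic regularity are smooth enough on a common time interval for the identities above to hold pointwise in $t$. Then pass to the limit $n\to\infty$ in the integrated versions \eqref{eq:mass_grow}--\eqref{eq:momGrow}, relying on continuous dependence in $H^1$ (encoded in the Strichartz bounds \eqref{eq:strich_sol}) and the Sobolev embedding $H^1(\R^d)\hookrightarrow L^{p}(\R^d)$ for every $p$ with $2\le p\le 2/(d-2)^+$. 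The constraint $\sigma_1,\sigma_2<2/(d-2)$ for $d\ge 3$ in Theorem \ref{thm:locEx} guarantees that $L^{2\sigma_1+2}$, $L^{2\sigma_2+2}$, and $L^{2\sigma_1+2\sigma_2+2}$ are all controlled by $H^1$, while the mixed quantities $|\psi|^{2\sigma_2}|\nabla\psi|^2$ and $|\psi|^{2\sigma_2-2}(\operatorname{Re}\bar\psi\nabla\psi)^2$ are bounded by $\|\psi\|_{L^\infty_t L^{p}}^{2\sigma_2}\|\nabla\psi\|_{L^2_{t,x}}^2$ after Hölder, so dominated convergence closes the limit on each finite time subinterval of $[0,T_{max})$.
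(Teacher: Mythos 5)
Your computations for all three balance laws are correct (the mass, momentum, and energy dissipation terms all come out with the right signs and coefficients), and your regularize-then-pass-to-the-limit strategy is exactly the standard argument the paper is implicitly invoking: the paper gives no proof of this theorem, only the citation to Cazenave \cite{Ca03}, together with the remark that the dissipative integrals are finite thanks to the Strichartz bounds \eqref{eq:strich_sol}. Your observation that the term $|\psi|^{2\sigma_2-2}(\mathrm{Re}\,\bar\psi\nabla\psi)^2$ is pointwise dominated by $|\psi|^{2\sigma_2}|\nabla\psi|^2$ is the right way to handle the case $\sigma_2<1$, so no gap remains.
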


\begin{remark}
The dissipative terms appearing in \eqref{eq:mass_grow}, \eqref{eq:en_grow} and \eqref{eq:momGrow} are always well defined because of Strichartz estimates in Theorem \ref{thm:locEx}, see \eqref{eq:strich_sol}.
\end{remark}
We also notice that if $\sigma_2< \sigma_1 < \frac{1}{d-2}$, then it follows 
 \begin{equation*}
 4\sigma_2 + 2 \leq 2(\sigma_1 + \sigma_2 + 1) \leq 4\sigma_1 + 2 < \frac{2d}{(d-2)^+}.
 \end{equation*}
Thus, by the Sobolev embedding theorem, the positive term on the right-hand side of \eqref{eq:en_grow} can be bounded by
 \begin{equation*}
 \int_s^t \int |\psi|^{2(\sigma_1 + \sigma_2 + 1)} \, dx \, d\tau \leq (t-s) \| \psi\|_{L^{\infty}((s,t),H^1)}^{2(\sigma_1 + \sigma_2 + 1)}.
 \end{equation*}
Analogously, we can also estimate the term on the right-hand side of \eqref{eq:momGrow} as
\begin{equation*}
\int_s^t \int |\psi|^{2\sigma_2} Im ( \bar{\psi} \nabla \psi) \, dx \, d\tau \lesssim (t-s) \| \psi \|^{2\sigma_2 +1}_{L^{4\sigma_2 +2}} \| \nabla \psi\|_{L^2} \lesssim (t-s) \| \psi \|^{2\sigma_2 +2}_{H^1}.
\end{equation*} 
These computations will be exploited in Section \ref{ch:bootstrap}. Indeed, they will be crucial to control uniformly in time the right-hand side of \eqref{eq:en_grow} and \eqref{eq:momGrow}.

\subsection{Construction of the Approximated Soliton Core} \label{sec;profile}

In this subsection, we are going to construct a suitable localized solution to the stationary equation \eqref{eq:Qb1}, that will constitute the blowing-up soliton core in the self-similar regime. This construction already appeared in \cite[Section $2$]{MeRaSz10}, however for the sake of completeness we are going to present the main related results and properties here.
\newline
Let us notice that the damping term does not enter into the construction of the approximated blow-up core.
\newline
Let $\rho \in (0,1)$, $b > 0$, we define the following radii 
\begin{equation*}
 R_b = \frac{2}{b} \sqrt{1 - \rho}, \ R_b^- = \sqrt{1 - \rho} R_b. 
\end{equation*}
 Let $\phi_b \in C^\infty(\R^d)$ be a radially symmetric cut-off function such that
\begin{equation}\label{eq:cut-off}
\phi_b(x) = \begin{cases} &0, \mbox{ for } |x| \geq R_b, \\
& 1, \mbox{ for } |x| \leq R_b^-,
\end{cases} \quad \quad 0 \leq \phi_b(x) \leq 1.
\end{equation}
We also denote the open ball in $\R^d$ of radius $R_b$ and centered at the origin by
\begin{equation*}
 B(0,R_b) = \{ x\in \R^d: \, |x| < R_b\}.
\end{equation*}
We recall that $s_c$ denotes the Sobolev critical exponent defined in \eqref{eq:sc}.
We start with the following lemma.

\begin{lem}\label{thm:exisPb}
There exists $\sigma_c^{(1)}>\frac2d$ and $\rho^{(1)}> 0$, such that for any $\frac{2}{d} < \sigma_1 < \sigma_c^{(1)} $, $\rho \in (0, \rho^{(1)})$, there exists $b^{(1)}(\rho)>0$, such that for any $0 < b \leq b^{(1)}$, there exists a unique radial solution $P_b \in H^1_{0}(B(0, R_b))$ to the elliptic equation
\begin{equation*}
 \left(- \Delta + 1 - \frac{b^2|x|^2}{4} \right) P_b - P_b^{2\sigma_1 + 1} = 0,
\end{equation*}
with $P_b > 0 \mbox{ in } B(0, R_b)$. Moreover $P_b \in C^3(B(0,R_b))$ and 
\begin{equation*}
 \| P_b - Q \|_{C^3} \to 0
\end{equation*}
as $b \to 0$, where $Q$ the unique positive solution to \eqref{eq:gsQ}. Finally $b^{(1)}(\rho) \to 0$ as $\rho \rightarrow 0$. 
\end{lem}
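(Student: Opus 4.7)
The plan is to construct $P_b$ by a perturbation argument around the undamped ground state $Q$ of \eqref{eq:gsQ}. As $b \to 0$, the radius $R_b = \frac{2}{b}\sqrt{1-\rho}$ tends to infinity, the potential $V_b(x) := 1 - \frac{b^2 |x|^2}{4}$ converges locally uniformly to $1$, and $V_b \geq \rho > 0$ throughout $B(0,R_b)$, so the limiting equation on $\R^d$ is precisely \eqref{eq:gsQ}. Since $Q$ decays exponentially, the truncation $Q\phi_b$, with $\phi_b$ the radial cutoff introduced above, is a natural ansatz, and the whole argument will reduce to solving a linear equation with a small right-hand side near this ansatz.

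Concretely, I would write $P_b = Q \phi_b + \xi$ and seek $\xi \in H^1_0(B(0,R_b))$ radial. Plugging in and using $-\Delta Q + Q = Q^{2\sigma_1+1}$, the remainder $\xi$ satisfies
\begin{equation*}
L_+ \xi = F_b(\xi) + \mathcal{R}_b,
\end{equation*}
where $L_+ = -\Delta + 1 - (2\sigma_1+1) Q^{2\sigma_1}$ is the linearized operator at $Q$, the term $F_b(\xi)$ collects the superlinear contributions in $\xi$ together with the linear perturbation $\tfrac{b^2|x|^2}{4}\xi$, and $\mathcal{R}_b$ gathers the commutator terms from the cutoff $\phi_b$ and the forcing $-\tfrac{b^2|x|^2}{4}\,Q\phi_b$. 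Because $Q$ decays exponentially and $\phi_b$ differs from $1$ only for $|x|\geq R_b^-\sim R_b$, the commutator part of $\mathcal{R}_b$ is exponentially small in $b^{-1}$ in any Sobolev norm, the forcing is polynomially small in $b$ once paired against the exponential decay of $Q$, and the superlinear part of $F_b$ is small in the contraction sense on a small $H^1$-ball.

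The key analytic input is the invertibility of $L_+$ on the radial sector, uniformly for small $b$. By the Kwong--McLeod uniqueness theorem, the kernel of $L_+$ on $\R^d$ is spanned by the $\partial_{x_j} Q$, which are purely non-radial; hence $L_+$ has trivial kernel on radial $H^1$ functions, and by the Fredholm structure it is invertible there. A standard resolvent perturbation argument, relying on $V_b \geq \rho$ and the exponential decay of low-lying $L^2$-eigenfunctions, transfers this invertibility to $H^1_0(B(0,R_b))$ with Dirichlet boundary conditions and a bound independent of $b$ once $b$ is small compared to $\rho$. A Banach fixed point argument in a small radial ball of $H^1_0(B(0,R_b))$ then produces a unique $\xi = \xi_b$ with $\|\xi_b\|_{H^1}\to 0$; positivity of $P_b = Q\phi_b + \xi_b$ follows from the positivity of $Q$ and the smallness of $\xi_b$, while the $C^3$ regularity and $\|P_b - Q\|_{C^3} \to 0$ are obtained from elliptic bootstrap on compact subsets combined with the exponential decay of $Q$ to handle the region near $\partial B(0,R_b)$.

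The main obstacle I anticipate is the quantitative invertibility of $L_+$ on the ball uniformly as $b \to 0$: one must rule out that eigenvalues of $L_+$ bifurcating from the bottom of the essential spectrum on $\R^d$ (located at $1$) cross zero after restriction to $H^1_0(B(0,R_b))$. The condition $V_b \geq \rho$ is precisely what enforces this, and explains both the necessity of the parameter $\rho$ and the degeneracy $b^{(1)}(\rho)\to 0$ as $\rho \to 0$ appearing in the statement. The slight-supercriticality hypothesis $\sigma_1 < \sigma_c^{(1)}$ plays only a technical role at this stage, ensuring that the nonlinear contraction estimate has the correct scaling for the perturbative fixed point to close; hence $\sigma_c^{(1)}$ can be fixed once and for all from the ground-state analysis of $Q$, independently of the subsequent dynamical steps.
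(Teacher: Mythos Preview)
Your perturbative approach is sound but differs from the paper's. The paper sketches a \emph{variational} argument: $P_b$ is obtained (up to scaling) as a minimizer of the quadratic functional $F(u) = \int_{B(0,R_b)} |\nabla u|^2 + (1 - \tfrac{b^2|x|^2}{4})|u|^2\,dx$ over $\{u \in H^1_{0,\mathrm{rad}}(B(0,R_b)) : \|u\|_{L^{2\sigma_1+2}} = 1\}$, the condition $V_b \geq \rho > 0$ on $B(0,R_b)$ ensuring coercivity; positivity is then immediate (replace $u$ by $|u|$), and regularity comes from standard elliptic theory. The variational route gives existence and positivity essentially for free, but the $C^3$ convergence $P_b \to Q$ and the uniqueness still require, implicitly, the same spectral input you use (non-degeneracy of $L_+$ on radial functions). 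Your route front-loads that input and gets $\|P_b - Q\|\to 0$ built into the construction, at the cost of setting up the uniform invertibility of $L_+$ on $H^1_0(B(0,R_b))$. One point to flag: your fixed point yields uniqueness only \emph{in a neighborhood} of $Q\phi_b$, whereas the lemma asserts uniqueness among all positive radial $H^1_0$ solutions; closing this gap requires an a priori step showing any such solution must be close to $Q$ for $b$ small, which is standard but not automatic from what you wrote.
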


This lemma was stated in \cite[Proposition $2.1$]{MeRaSz10} and its proof is a straightforward adaptation of that given for \cite[Proposition $1$]{MeRa03} in the mass-critical case. To prove the existence, one shows that $P_b$ is a suitably scaled minimizer of the functional 
\begin{equation*}
 F(u) = \int_{B(0, R_b)} |\nabla u|^2 +\left( 1 - \frac{b^2|x|^2}{4} \right) |u|^2\, dx
\end{equation*}
in the set 
\begin{equation*}
 \mathcal{U} = \left\{u \in H^1_{0,rad}(B(0,R_b)) : \, \| u\|_{L^{2\sigma_1 + 2}} = 1 \right\},
\end{equation*}
where $H^1_{0,rad}(B(0,R_b)) $ is the subset of $H^1_0(B(0,R_b)) $ containing radially symmetric functions.
Notice that the functional $F$ is bounded from below in $B(0, R_b)$ and $ - \Delta + 1 - \frac{b^2|x|^2}{4}$ is an uniformly elliptic operator. In particular, both the maximum principle and standard regularity results for
elliptic equations are satisfied, see \cite{GiTr01} for instance. \newline
Next, we define the function
\begin{equation*}
 \hat{Q}_b = \phi_b e^{-\ii b \frac{|x|^2}{4}} P_b,
\end{equation*}
 in the whole $\R^d$ space, where the cut-off $\phi_b$ is defined in \eqref{eq:cut-off}.
Notice that the profile $\hat{Q}_b \in H^1(\R^d)$ satisfies the equation
\begin{equation} \label{eq:Qb_0}
\Delta\hat{Q}_b -\hat{Q}_b + \ii b \left(\frac{d}{2}\hat{Q}_b + x \cdot \nabla\hat{Q}_b\right) + |\hat{Q}_b|^{2\sigma_1}\hat{Q}_b = - \tilde{\Psi}_b,
\end{equation}
where the remainder term $\tilde{\Psi}_b$ comes from the localization in space and is defined by
\begin{equation}\label{eq:psib0}
 \tilde{\Psi}_b = \left(2 \nabla \phi_b \cdot P_b + P_b(\Delta \phi_b) + (\phi_b^{2\sigma_1 +1} - \phi_b) P_b^{2\sigma_1 + 1}\right)e^{-\ii b \frac{|x|^2}{4}}.
\end{equation}
We recall some properties of the profile $\hat{Q}_b$ which were shown in \cite[Proposition $2.1$]{MeRaSz10}.

\begin{prop}
For any polynomial $p$ and any $k=0,1$, there exists a constant $C>0$ such that 
\begin{equation} \label{eq:Psi_grande}
\left\| p \frac{d^k}{dx^k} \tilde{\Psi}_b \right\|_{L^\infty} \leq e^{-\frac{C}{|b|}}. 
\end{equation}
Moreover, the function $\hat{Q}_b$ satisfies 
\begin{displaymath}
P[ \hat{Q}_b] = \left( \nabla\hat{Q}_b, \ii\hat{Q}_b \right)= 0, \quad \left(\Lambda \hat{Q}_b, \ii\hat{Q}_b \right) = \left(x \cdot \nabla\hat{Q}_b, \ii\hat{Q}_b \right) = - \frac{b}{2} \|x\hat{Q}_b\|_{L^2}^2,
\end{displaymath}
where $\Lambda$ is defined in \eqref{eq:LambdaD} and
\begin{displaymath}
\frac{d}{d b^2} \left( \int |\hat{Q}_b|^2 \, dx \right)_{|b=0} = C(\sigma_1),
\end{displaymath}
with $ C(\sigma_1) \rightarrow C>0$ as $\sigma_1 \rightarrow \frac{2}{d}$.
\end{prop}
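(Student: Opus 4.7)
\emph{Proof plan.}
The three assertions are proved independently, but all rest on two structural facts: the profile $\hat Q_b$ is the product of $P_b$ with the explicit Gaussian phase $e^{-\ii b|x|^2/4}$ and the radial cut-off $\phi_b$, and $P_b$ is close to $Q$ together with being exponentially localized uniformly in $b$.

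For the pointwise bound on $\tilde\Psi_b$, observe from \eqref{eq:psib0} that the spatial support of $\tilde\Psi_b$ is contained in $\{R_b^-\le|x|\le R_b\}$, where $R_b^-\sim 1/b$. The plan is therefore to show that $P_b$ decays exponentially with a rate uniform in small $b$. Since $P_b$ solves $-\Delta P_b+(1-b^2|x|^2/4)P_b=P_b^{2\sigma_1+1}$ on $B(0,R_b)$, and since $1-b^2|x|^2/4\ge\rho>0$ on this ball, I would first use elliptic bootstrap to get a uniform $L^\infty$-bound on $P_b$, then restrict to the region where $|x|\ge M_0$ large enough that $P_b^{2\sigma_1}\le\rho/2$ (which is possible because $P_b\to Q$ in $C^3$ on compacts and $Q$ decays). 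In this region $-\Delta P_b+(\rho/2)P_b\le 0$, so a standard comparison with the barrier $Ce^{-\sqrt{\rho/2}\,|x|}$ yields $P_b(x)\lesssim e^{-c|x|}$ uniformly in $b$. Substituting into \eqref{eq:psib0} on $\{R_b^-\le|x|\le R_b\}$ and absorbing the polynomial weight $p(x)$ (which grows at most polynomially in $1/b$ on the support) into the exponential gives \eqref{eq:Psi_grande}; the argument for $k=1$ is identical after one more differentiation, whose $L^\infty$ bound follows from Schauder estimates applied to the elliptic equation.

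For the algebraic identities I would just compute directly. Since $\phi_b,P_b$ are real and radial, one has $|\hat Q_b|^2=\phi_b^2 P_b^2$ and
\[
\nabla\hat Q_b=e^{-\ii b|x|^2/4}\bigl(\nabla(\phi_b P_b)-\tfrac{\ii b x}{2}\phi_b P_b\bigr).
\]
Taking the inner product against $\ii\hat Q_b$ and retaining only real parts, the first summand gives $-\ii\,\nabla(\phi_b P_b)\phi_b P_b$ which is purely imaginary, while the second yields $-\tfrac{b x}{2}\phi_b^2 P_b^2$. Thus $(\nabla\hat Q_b,\ii\hat Q_b)=-\tfrac{b}{2}\int x|\hat Q_b|^2\,dx=0$ by radial symmetry, proving $P[\hat Q_b]=0$. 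The same computation with the extra factor $x$ in the test function gives $(x\cdot\nabla\hat Q_b,\ii\hat Q_b)=-\tfrac{b}{2}\|x\hat Q_b\|_{L^2}^2$. Finally, $(\Lambda\hat Q_b,\ii\hat Q_b)=\tfrac{1}{\sigma_1}(\hat Q_b,\ii\hat Q_b)+(x\cdot\nabla\hat Q_b,\ii\hat Q_b)$, and the first term vanishes trivially, yielding the stated equality.

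For the expansion of the mass, I plan to write $P_b=Q+b^2 W+o(b^2)$ in a weighted $H^1$-norm, where the correction $W$ is produced by an implicit function argument applied to the elliptic equation for $P_b$ on the expanding domain $B(0,R_b)$. Formally substituting and matching powers of $b^2$ gives
\[
L_+ W=\tfrac{|x|^2}{4}Q,\qquad L_+=-\Delta+1-(2\sigma_1+1)Q^{2\sigma_1},
\]
which is solvable because $|x|^2 Q$ is radial and hence orthogonal to $\ker L_+\cap H^1=\mathrm{span}\{\partial_{x_j}Q\}$. Since the cut-off contributes only $O(e^{-C/b})$ by the first assertion, we conclude
\[
\int|\hat Q_b|^2\,dx=\int Q^2\,dx+2b^2\int Q W\,dx+o(b^2),
\]
so $C(\sigma_1)=2\int Q\,L_+^{-1}(\tfrac{|x|^2}{4}Q)\,dx$. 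To pass to the limit $\sigma_1\to 2/d$, I would use the mass-critical identity $L_+(\Lambda Q)=-2Q$ (with $s_c=0$), which expresses $\int QW$ in terms of $\int|xQ|^2$ up to lower-order terms that vanish as $s_c\to 0$, yielding $C(\sigma_1)\to\tfrac14\|xQ\|_{L^2}^2>0$, where $Q$ here is the limiting mass-critical ground state. The main obstacle I anticipate is precisely this last step: making the expansion of $P_b$ uniform in $\sigma_1$ as $\sigma_1$ approaches $2/d$ (where scaling invariance degenerates), and verifying that the leading-order coefficient in the expansion of the mass converges to the expected positive constant rather than vanishing in the limit. This is where one needs the detailed spectral picture of $L_+$ and the critical-case Pohozaev-type identities from \cite{MeRa03}.
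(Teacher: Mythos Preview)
Your proposal is correct. The paper itself does not supply a proof of this proposition: it merely cites \cite[Proposition~2.1]{MeRaSz10} (which in turn adapts \cite[Proposition~1]{MeRa03}), so there is no in-paper argument to compare against in detail. That said, your strategy coincides with the one in those references: the exponential bound \eqref{eq:Psi_grande} follows from the support of $\tilde\Psi_b$ being contained in $\{|x|\ge R_b^-\sim 1/b\}$ together with uniform-in-$b$ exponential decay of $P_b$ obtained by a barrier/maximum-principle argument on the uniformly elliptic operator $-\Delta+(1-b^2|x|^2/4)$; the algebraic identities for $P[\hat Q_b]$ and $(\Lambda\hat Q_b,\ii\hat Q_b)$ are exactly the direct computation you wrote, using that $\phi_bP_b$ is real radial and the phase $e^{-\ii b|x|^2/4}$ is explicit; and the mass expansion proceeds via $P_b=Q+b^2W+o(b^2)$ with $L_+W=\tfrac{|x|^2}{4}Q$, after which the identity $L_+(\Lambda Q)=-2Q$ gives $C(\sigma_1)=2\int QW=\tfrac{1+s_c}{4}\|xQ\|_{L^2}^2\to\tfrac14\|xQ_c\|_{L^2}^2>0$ as $\sigma_1\to 2/d$. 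The only point you flag as delicate---uniformity of the $b^2$-expansion as $\sigma_1\to 2/d$---is indeed handled in \cite{MeRa03} by the same implicit-function construction you outline, combined with the continuous dependence of $Q$ and $L_+^{-1}$ (on the radial sector) on $\sigma_1$.
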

Notice that heuristically $\hat Q_b$ provides an approximating solution to \eqref{eq:Qb1} in the ball $B(0, R_b)$. In the complementary region, $\R^d\setminus B(0, R_b)$ nonlinear effects become negligible, hence it is sufficient to study the outgoing radiation defined in the next lemma, that first appeared in \cite[Lemma $15$]{MeRa04}. The results in the lemma below will be exploited, in particular, to estimate the mass flux leaving the collapsing core in Lemma \ref{thm:refvirial} below.

\begin{lem}\label{lem:rad}
There exists $\rho_2 > 0$ such that for any $0<\rho < \rho_2$ there exists $b_2(\rho)> 0$ such that for any $0 < b < b_2$, there exists a unique radial solution $\zeta_b \in \dot{H}^1(\R^d)$ to 
\begin{equation}\label{eq:rad}
\Delta \zeta_b - \zeta_b + \ii b \left(\frac{d}{2} \zeta_b + x \cdot \nabla \zeta_b\right) = \tilde{\Psi}_b,
\end{equation}
where $\tilde{\Psi}_b$ is defined in \eqref{eq:psib0}. 
Moreover there exists $C>0$ such that
\begin{equation}\label{eq:Gammab}
\Gamma_b = \lim_{|x| \rightarrow + \infty} |x|^d |\zeta_b(x)|^2 \lesssim e^{-\frac{C}{b}},
\end{equation}
and there exists $c>0$ such that for $|x| \geq R_b^2$
\begin{equation}\label{eq:gammaB1}
 e^{\frac{-(1 + c\rho)\pi}{b}} \leq \frac{4}{5} \Gamma_b \leq |x|^d |\zeta_b(x)|^2 \leq e^{\frac{-(1 - c\rho)\pi}{b}}. 
\end{equation}
Furthermore, the following estimates hold
\begin{equation}\label{eq:zetaDer}
 \| \nabla \zeta_b\|_{L^2}^2 \leq \Gamma_b^{1 - c\rho},
\end{equation}
\begin{equation*}
 \left\| |y|^\frac{d}{2}( |\zeta_b| + |y| |\nabla \zeta_b|) \right\|_{L^\infty(|y| \geq R_b)} \leq \Gamma_b^{\frac{1}{2} - c\rho}.
\end{equation*}
Finally, we have that
\begin{equation*}
 \left\| |\zeta_b| e^{-|y|} \right\|_{C^2(|y| \leq R_b)} \leq \Gamma_b^{\frac{3}{5}},\quad
 \left\| \partial_b \zeta_b \right\|_{C^1} \leq \Gamma_b^{\frac{1}{2} - c\rho}.
\end{equation*}
\end{lem}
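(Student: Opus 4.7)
The plan is to reduce \eqref{eq:rad} to a one-dimensional Schrödinger equation with a single turning point and then apply semiclassical/WKB analysis in the limit $b \ll 1$.

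First, I would apply the gauge transformation $\zeta_b(x) = e^{-\ii b|x|^2/4}\tilde{\zeta}_b(x)$. A direct computation shows that the $\ii b(d/2 + x\cdot\nabla)$ term in \eqref{eq:rad} combines with the cross-terms from differentiating the phase to yield
\begin{equation*}
\Delta\tilde{\zeta}_b + \left(\frac{b^2|x|^2}{4} - 1\right)\tilde{\zeta}_b = e^{\ii b|x|^2/4}\tilde{\Psi}_b,
\end{equation*}
whose right-hand side is, by \eqref{eq:psib0}, a real, radial function supported in the annulus $R_b^- \leq |x| \leq R_b$, of size exponentially small in $1/b$ by \eqref{eq:Psi_grande}. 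Writing $\tilde{\zeta}_b(x) = r^{-(d-1)/2}w(r)$ removes the first-order radial derivative and reduces the equation to a one-dimensional Schrödinger problem with effective potential $V(r) = b^2 r^2/4 - 1 + O(r^{-2})$, which has a single turning point at $r \simeq 2/b$ for small $b$.

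Next, I would build the solution via variation of parameters using two adapted homogeneous solutions $w_\pm$: $w_-$ regular at $r=0$ and exponentially decaying across the classically forbidden region $0 < r < 2/b$, and $w_+$ behaving as an outgoing wave at infinity in the classically allowed region $r > 2/b$. Standard WKB ansätze give $w_\pm \sim |V|^{-1/4}\exp\left(\pm\int\sqrt{|V|}\,dr\right)$ in the forbidden region and $w_\pm \sim V^{-1/4}\exp\left(\pm\ii\int\sqrt{V}\,dr\right)$ in the allowed region, with matching across the turning point provided by the Airy-function connection formulas. The key semiclassical computation is the tunneling integral
\begin{equation*}
\int_0^{2/b}\sqrt{1 - \frac{b^2 r^2}{4}}\,dr = \frac{\pi}{2b},
\end{equation*}
obtained by the substitution $u = br/2$. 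Consequently, the decaying WKB solution propagating from $r=O(1)$ up to the turning point is suppressed by a factor $e^{-\pi/(2b)}$, and the outgoing-wave amplitude produced by the forcing is of this order, so that squaring gives $|x|^d|\zeta_b(x)|^2 \sim e^{-\pi/b}$ in the allowed region, yielding both \eqref{eq:Gammab} and the two-sided bound \eqref{eq:gammaB1}. Uniqueness in $\dot H^1(\R^d)$ follows since the homogeneous equation admits only the trivial solution compatible with regularity at the origin and outgoing behavior at infinity.

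The derivative bound \eqref{eq:zetaDer} exploits a crucial cancellation: in the allowed region the phase $e^{\ii\phi}$ of $\tilde{\zeta}_b$ produced by WKB nearly matches the gauge $e^{-\ii b r^2/4}$ (both have derivative $\sim br/2$), so the radial derivative $\partial_r\zeta_b$ is subleading and integrable against $r^{d-1}\,dr$ with total norm exponentially small. The pointwise estimates in the forbidden region follow directly from the exponential smallness of the forcing together with the WKB bound on the Green's function, and the derivative with respect to $b$ is controlled by differentiating the variation-of-parameters formula and reusing the same WKB analysis. The main obstacle I anticipate is the matched-asymptotic step across the turning point $r \simeq 2/b$: extracting the precise exponent $\pi/b$ together with the $(1\pm c\rho)$ two-sided control in \eqref{eq:gammaB1} requires a careful bookkeeping of $O(\rho)$ contributions from the WKB amplitude prefactors, from the cut-off location, and from the uniformity of the asymptotics over the annular support of $\tilde{\Psi}_b$.
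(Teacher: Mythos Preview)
Your proposal is essentially correct and follows the approach of the original proof in \cite[Lemma~15]{MeRa04}, which is exactly what the present paper does: it does not reprove this lemma at all but simply cites it. The gauge transformation $\zeta_b = e^{-\ii b|x|^2/4}\tilde\zeta_b$, the reduction to a one-dimensional Schr\"odinger problem with potential $b^2r^2/4 - 1$, the computation of the tunneling integral $\int_0^{2/b}\sqrt{1-b^2r^2/4}\,dr = \pi/(2b)$, and the WKB matching across the turning point are precisely the ingredients of the Merle--Rapha\"el argument. Your identification of the delicate step (the matched asymptotics near the turning point and the bookkeeping of the $O(\rho)$ contributions) is also accurate; in the original reference this is handled via a careful ODE analysis and the connection formulas, and is indeed the most technical part.
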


\begin{remark}
 We observe that the profile $\zeta_b \in \dot{H}^1_{rad}(\R^d)$ is not in $L^2(\R^d)$ because of the logarithmic divergence implied by \eqref{eq:Gammab}, \eqref{eq:gammaB1}.
\end{remark}

We now want to suitably modify the profile $\hat{Q}_b$ to obtain an approximating solution to 
\begin{equation}\label{eq:Qbreal}
	\ii \partial_t Q_{b} + \Delta Q_{b} - Q_{b} + \ii {b} \left(\frac{1}{\sigma_1} Q_{b}+ x \cdot \nabla Q_{b} \right) + |Q_{b}|^{2\sigma_1} Q_{b} = 0,
\end{equation}
where $b = b(t)$ is a function depending on time. Notice that if we suppose that $ \dot b(t) = 0$ and $ d \sigma_1= 2$, then $\hat Q_b$ is already an approximating solution to \eqref{eq:Qbreal}, see \eqref{eq:Qb_0} and \eqref{eq:Psi_grande}. 
In our case, we suppose that there exists $\beta(b)>0$ such that $ \dot{b}(t)= \beta s_c$.
In other words, we search for localized solutions to the following equation
\begin{equation}\label{eq:qbDer}
 \ii s_c \beta \partial_b Q_b + \Delta Q_b - Q_b + \ii b\left(\frac{1}{\sigma_1} Q_b + x \cdot \nabla Q_b \right) + | Q_b |^{2\sigma_1 }Q_b = 0.
\end{equation}

We find a solution to this equation as a suitable perturbation of the profile $\hat{Q}_b$. 
We make the following ansatz on $Q_b$,
\begin{equation*}
 Q_b =\hat{Q}_b + s_c T_b,
\end{equation*}
by requiring that it solves equation \eqref{eq:qbDer} inside the ball $|x| \leq R_b^-$, up to an error of order $s_c^{1 + C}$ for some $C>0$. Such a solution was already studied in \cite[Proposition $2.6$]{MeRaSz10}.

\begin{prop} \label{thm:profile}
There exists $\tilde\sigma_c>\frac2d$ and $\rho_3>0$ such that for any $\frac{2}{d} < \sigma_1 < \tilde\sigma_c$, $\rho \in (0, \rho_3)$ there exists $b_3(\rho)>0$ such that for any $0 < b < b_3$ there exists a radial function $T_b\in C^3(\R^d)$ and a constant $\beta> 0$ 
such that $Q_b = e^{-\ii b \frac{|y|^2}{4}} \left( P_b \phi_b + s_c T_b \right)$ satisfies 
\begin{equation} \label{eq:Qb}
\begin{aligned}
 \ii s_c \beta \partial_b Q_b + \Delta Q_b - Q_b + \ii b \Lambda Q_b + |Q_b|^{2\sigma_1 }Q_b & = - \Psi_b
\end{aligned}
\end{equation} 
where 
\begin{equation}\label{eq:psi+phi}
 \Psi_b = \tilde{\Psi}_b + \Phi_b,
\end{equation}
 and we have
\begin{equation} \label{eq:psiB}
\begin{aligned}
 &\left\| \frac{d^k}{dx^k} \Phi_b \right\|_{L^\infty(|x| \leq R^-_b)} \lesssim s_c^{1 +C}, \\
 &\left\| \frac{d^k}{dx^k} \Phi_b \right\|_{L^\infty(|x| \geq R^-_b)} \lesssim s_c,
\end{aligned}
\end{equation}
for $k = 0,1$ and $C >0$. 
Moreover, $Q_b$ satisfies
 \begin{equation}\label{eq:Qbprop}
\begin{aligned}
 &\left|E[Q_b]\right| \lesssim\Gamma_b^{1 - c\rho} + s_c, \\
 & P[Q_b] = \left( \nabla Q_b, \ii Q_b \right) = 0, \\
 & \left( \Lambda Q_b, \ii Q_b \right) = \left( x \cdot \nabla Q_b, \ii Q_b \right) = - \frac{b}{2} \|xQ\|_{L^2}^2(1 + \delta_1(s_c, b)), \\
 &\|Q_b\|_{L^2}^2 = \| Q\|_{L^2}^2 + O(s_c) + O(b^2),\\
\end{aligned}
 \end{equation}
where $\delta_1(s_c, b) \to 0$ as 
 as $b,s_c \rightarrow 0$ and $c\rho \ll 1 $ is defined in \eqref{eq:zetaDer} and $Q$ is a solution to \eqref{eq:gsQ}. The profile $Q_b$ satisfies also the uniform estimate
 \begin{equation}\label{eq:polEst}
 \left|P(x) \frac{d^k}{dx^k} Q_b(x)\right| \lesssim e^{-(1 + c) |x|}
\end{equation}
for any polynomial $P(x)$. 
 Finally, we have that
\begin{equation}\label{eq:Tb}
 \left\| e^{ \frac{(1 - \rho)\pi}{4} |x|} T_b \right\|_{C^3} + \left\| e^{\frac{(1 - \rho) \pi}{4} |x|} \partial_b T_b \right\|_{C^2} + \left| \partial_b \beta(b) \right| \lesssim 1.
\end{equation}
\end{prop}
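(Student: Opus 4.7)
The plan is to construct $T_b$ by a perturbation argument around $\hat Q_b$ and then verify the stated bounds by direct computation. Substituting the ansatz $Q_b = \hat Q_b + s_c R_b$, with $R_b = e^{-\ii b |y|^2/4} T_b$, into \eqref{eq:Qb} and using \eqref{eq:Qb_0} to eliminate the contributions from $\hat Q_b$ alone, expanding the focusing nonlinearity $|Q_b|^{2\sigma_1}Q_b$ around $\hat Q_b$, and applying $\Lambda = D - s_c$ from \eqref{eq:Lambda-D}, the terms of order $s_c$ balance provided
\begin{equation*}
\mathcal L_{\hat Q_b} R_b = \ii b\, \hat Q_b - \ii \beta\, \partial_b \hat Q_b + \mathcal E_b,
\end{equation*}
where $\mathcal L_{\hat Q_b}$ is the linearization of $\Delta - 1 + \ii b D + |\cdot|^{2\sigma_1}(\cdot)$ around $\hat Q_b$, and $\mathcal E_b$ collects higher-order Taylor remainders together with the exponentially small contributions coming from $\tilde\Psi_b$.

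By Lemma \ref{thm:exisPb}, $\hat Q_b \to Q$ in $C^3$ as $b\to 0$, so $\mathcal L_{\hat Q_b}$ is a small perturbation of the linearization $\mathcal L$ of the NLS stationary operator around the ground state $Q$. On the radial class, the kernel of $\mathcal L$ is generated solely by the gauge mode $\ii Q$, since translations $\nabla Q$ are non-radial and the supercritical stationary problem admits no scaling mode in the kernel. The solvability condition
\begin{equation*}
\bigl(\ii b\, \hat Q_b - \ii \beta\, \partial_b \hat Q_b + \mathcal E_b,\;\ii Q\bigr) = 0
\end{equation*}
then determines $\beta = \beta(b) > 0$ uniquely, with $|\partial_b \beta| \lesssim 1$ inherited from the smooth dependence of $\hat Q_b$ on $b$. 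With $\beta$ so fixed, a Banach fixed-point argument combined with the exponential decay of the resolvent of $\mathcal L$ on $(\ii Q)^\perp$ produces a radial $R_b \in C^3(\R^d)$ satisfying \eqref{eq:Tb}. The residual $\Phi_b$ in \eqref{eq:psi+phi} is then identified as the gap between the full equation \eqref{eq:Qb} and the linearized one: inside the ball $|y| \leq R_b^-$ it is a pure Taylor remainder of order $s_c^{1+C}$, while on the annulus $R_b^- \leq |y| \leq R_b$ the derivatives of $\phi_b$ contribute an additional $O(s_c)$ term, together yielding \eqref{eq:psiB}.

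The remaining estimates \eqref{eq:Qbprop} and \eqref{eq:polEst} follow by direct verification. The pointwise bound \eqref{eq:polEst} is inherited from the exponential decay of $P_b \in C^3(B(0,R_b))$ combined with \eqref{eq:Tb}. The identity $P[Q_b] = 0$ follows from the radial structure of $P_b \phi_b + s_c T_b$ together with the explicit phase $e^{-\ii b|y|^2/4}$, which make the integrand of $P[Q_b]$ odd in $y$. The virial identity $(\Lambda Q_b, \ii Q_b) = -\frac{b}{2}\|xQ\|_{L^2}^2(1+\delta_1(s_c,b))$ is obtained by adding the $O(s_c)$ correction from $R_b$ to the known identity for $\hat Q_b$ and invoking \eqref{eq:Tb}, and $\|Q_b\|_{L^2}^2 = \|Q\|_{L^2}^2 + O(s_c) + O(b^2)$ is a direct expansion of $|P_b\phi_b + s_c T_b|^2$. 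Finally, the bound $|E[Q_b]| \lesssim \Gamma_b^{1-c\rho} + s_c$ is obtained via a Pohozaev-type identity: pairing \eqref{eq:Qb} with $\overline{\Lambda Q_b}$ and taking real parts extracts a multiple of $E[Q_b]$, modulo boundary contributions from $\tilde\Psi_b$ controlled by \eqref{eq:Psi_grande} and \eqref{eq:Gammab}, plus cross-terms controlled by \eqref{eq:psiB}.

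The main technical obstacle is the quantitative inversion of $\mathcal L_{\hat Q_b}$ with preservation of exponential decay: the radial cokernel, a $b$-dependent small perturbation of $\ii Q$, must be projected out by the precise choice of $\beta$, and the Green's function must supply the rate $(1-\rho)\pi/4$ appearing in \eqref{eq:Tb}. The smallness of $s_c$ is what keeps the nonlinear corrections negligible relative to this linear inversion, rendering the Fredholm analysis uniform in $b$.
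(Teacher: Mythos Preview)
Your outline is correct and follows the construction in \cite[Proposition~2.6]{MeRaSz10}, to which the paper defers without supplying its own proof: linearize \eqref{eq:Qb} around $\hat Q_b$ using \eqref{eq:Qb_0} and \eqref{eq:Lambda-D}, fix $\beta$ via the Fredholm solvability condition on the radial class (kernel $\mathrm{span}\{\ii Q\}$ up to $O(b)+O(s_c)$ perturbations), invert to produce $T_b$ with the required exponential decay, and read off \eqref{eq:Qbprop}--\eqref{eq:polEst} by direct expansion. The only point to be slightly more careful about is that the operator $\mathcal L_{\hat Q_b}$ is not self-adjoint once $b\neq 0$, so the cokernel element against which you test is a $b$-dependent perturbation of $\ii Q$ rather than $\ii Q$ itself; this does not affect the argument but should be acknowledged when you claim the solvability condition fixes $\beta$.
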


We will now show a useful Pohozaev-type estimate for the profile $Q_b$.

\begin{prop}\label{prp:poho}
 Let $Q_b$ be a solution to \eqref{eq:Qb}. Then we have
 \begin{equation}\label{eq:poho}
 2 E[Q_b] - (\ii s_c \beta \partial_b Q_b + \Psi_b, \Lambda Q_b) = s_c\left( 2E[Q_b] + \| Q_b\|_{L^2}^2\right).
 \end{equation}
\end{prop}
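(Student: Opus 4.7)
The plan is to pair equation \eqref{eq:Qb} with $\Lambda Q_b$ in the real $L^2$ inner product $(f,g)=\operatorname{Re}\int f\bar g\,dx$, isolate $\ii s_c\beta\partial_b Q_b+\Psi_b$, and compute the four resulting terms one by one. From \eqref{eq:Qb}, we have
\begin{equation*}
\ii s_c\beta\partial_b Q_b+\Psi_b = -\Delta Q_b+Q_b-\ii b\,\Lambda Q_b-|Q_b|^{2\sigma_1}Q_b,
\end{equation*}
so after pairing with $\Lambda Q_b$, the goal is to show
\begin{equation*}
(-\Delta Q_b,\Lambda Q_b)+(Q_b,\Lambda Q_b)+(-\ii b\,\Lambda Q_b,\Lambda Q_b)+(-|Q_b|^{2\sigma_1}Q_b,\Lambda Q_b)=2E[Q_b]-s_c(2E[Q_b]+\|Q_b\|_{L^2}^2).
\end{equation*}
The integration by parts used below is harmless: by Proposition \ref{thm:profile}, namely the decay \eqref{eq:polEst} and the estimate \eqref{eq:Tb} for $T_b$, the function $Q_b$ and its first derivatives decay exponentially, so every boundary term at infinity vanishes.

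The four terms are then computed separately. The pure imaginary term $(-\ii b\,\Lambda Q_b,\Lambda Q_b)$ is zero because $(\ii f,f)=\operatorname{Re}\int \ii|f|^2\,dx=0$. For $(Q_b,\Lambda Q_b)$, I would apply the antisymmetry formula \eqref{eq:Lambdapp} with $f=g=Q_b$, yielding $(Q_b,\Lambda Q_b)=-s_c\|Q_b\|_{L^2}^2$; alternatively this follows by integrating $x\cdot\nabla|Q_b|^2$ by parts together with the identity $1/\sigma_1-d/2=-s_c$. For the Laplacian term, I would expand $\Lambda Q_b=\tfrac{1}{\sigma_1}Q_b+x\cdot\nabla Q_b$, giving $(-\Delta Q_b,\tfrac{1}{\sigma_1}Q_b)=\tfrac{1}{\sigma_1}\|\nabla Q_b\|_{L^2}^2$, and treat $(-\Delta Q_b,x\cdot\nabla Q_b)$ via the standard Pohozaev identity, integrating by parts twice and using $\tfrac12\int x\cdot\nabla|\nabla Q_b|^2=-\tfrac{d}{2}\|\nabla Q_b\|_{L^2}^2$, which produces $(1-d/2)\|\nabla Q_b\|_{L^2}^2$. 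Summing, $(-\Delta Q_b,\Lambda Q_b)=(1-s_c)\|\nabla Q_b\|_{L^2}^2$. For the nonlinearity, I would use $x\cdot\nabla|Q_b|^{2\sigma_1+2}=(2\sigma_1+2)|Q_b|^{2\sigma_1}\operatorname{Re}(\bar Q_b(x\cdot\nabla Q_b))$ to rewrite the gradient part, integrate by parts, and obtain
\begin{equation*}
(|Q_b|^{2\sigma_1}Q_b,\Lambda Q_b)=\left(\frac{1}{\sigma_1}-\frac{d}{2(\sigma_1+1)}\right)\|Q_b\|_{L^{2\sigma_1+2}}^{2\sigma_1+2}=\frac{1-s_c}{\sigma_1+1}\|Q_b\|_{L^{2\sigma_1+2}}^{2\sigma_1+2},
\end{equation*}
where the last equality uses $2-(d-2)\sigma_1=2\sigma_1(1-s_c)$.

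Putting all four pieces together and recalling $2E[Q_b]=\|\nabla Q_b\|_{L^2}^2-\tfrac{1}{\sigma_1+1}\|Q_b\|_{L^{2\sigma_1+2}}^{2\sigma_1+2}$, the quantity $2E[Q_b]-(\ii s_c\beta\partial_b Q_b+\Psi_b,\Lambda Q_b)$ collapses, term by term, into $s_c\|\nabla Q_b\|_{L^2}^2+s_c\|Q_b\|_{L^2}^2-\tfrac{s_c}{\sigma_1+1}\|Q_b\|_{L^{2\sigma_1+2}}^{2\sigma_1+2}=s_c(2E[Q_b]+\|Q_b\|_{L^2}^2)$, which is exactly \eqref{eq:poho}. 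There is no real obstacle here; this is a standard Pohozaev identity and the only point requiring minor care is bookkeeping the complex-valued integrations by parts and the algebra linking $1/\sigma_1$, $d/2$, and $s_c$.
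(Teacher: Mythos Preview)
Your proof is correct and follows essentially the same approach as the paper: pair equation \eqref{eq:Qb} with $\Lambda Q_b$ and evaluate each term via integration by parts. The paper is simply more terse, summarizing your detailed computations of the Laplacian and nonlinear terms in the single line $(\Delta Q_b+|Q_b|^{2\sigma_1}Q_b,\Lambda Q_b)=2(s_c-1)E[Q_b]$, and likewise stating $-(Q_b,\Lambda Q_b)=s_c\|Q_b\|_{L^2}^2$ without the intermediate algebra.
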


\begin{proof}
 We take the scalar product of equation \eqref{eq:Qb} with $\Lambda Q_b$
 \begin{equation*}
 0 = ( \Delta Q_b - Q_b + \ii b\Lambda Q_b + |Q_b|^{2\sigma_1} Q_b, \Lambda Q_b) + (\ii s_c \beta \partial_b Q_b + \Psi_b, \Lambda Q_b).
 \end{equation*}
 Now by integration by parts, we observe that 
 \begin{equation*}
 ( \Delta Q_b + |Q_b|^{2\sigma_1} Q_b, \frac{1}{\sigma_1} Q_b + x\cdot \nabla Q_b) = 2(s_c - 1) E[Q_b],
 \end{equation*}
 and 
 \begin{equation*}
 -(Q_b, \Lambda Q_b) = s_c \| Q_b\|_{L^2}^2. 
 \end{equation*}
 as a consequence, we obtain that
 \begin{equation*}
 0 = 2(s_c - 1) E[Q_b] + s_c \| Q_b\|_{L^2}^2 + (\ii s_c \beta \partial_b Q_b + \Psi_b, \Lambda Q_b)
 \end{equation*}
 which is equivalent to \eqref{eq:poho}.
\end{proof}

Finally, by applying the operator $\Lambda$ to \eqref{eq:Qb}, we see that $\Lambda Q_b$ satisfies the following equation
\begin{equation}\label{eq:LambQb}
\begin{aligned}
 \ii \beta s_c \Lambda Q_b + \Lambda \Delta Q_b - \Lambda Q_b + \ii b \Lambda(\Lambda Q_b) + |Q_b|^{2\sigma_1} \Lambda Q_b \\ + 2\sigma_1 Re(\bar Q_b \, x \cdot \nabla Q_b) |Q_b|^{2\sigma_1 - 2} Q_b = -\Lambda \Psi_b,
\end{aligned}
\end{equation}
that will be used in the Appendix.

\subsection{Coercivity Property}

We conclude the section by discussing the coercivity properties of the linearized operator around the ground state in the mass-critical case. Since our analysis deals with the slightly mass-supercritical case, we derive a similar property by perturbative arguments. Let us first define the following quadratic form associated with the linearized operator around the ground state in the mass-critical case,
 \begin{equation}\label{eq:quadform}
 \begin{aligned}
 \mathcal{H}(f,f) & = \| \nabla f\|_{L^2}^2 + \frac{2}{d} \left( 1 + \frac{4}{d}\right) \int Q_c^{\frac{4}{d} -1} (y \cdot \nabla Q_c) Re(f)^2\, dy \\
 &+ \frac{2}{d} \int Q_c^{\frac{4}{d} -1} (y \cdot \nabla Q_c) Im(f)^2 dy.
 \end{aligned}
 \end{equation}
where $Q_c$ is the ground state profile for the mass-critical NLS, namely is the unique positive solution to
\begin{equation}
\Delta Q_c - Q_c + |Q_c|^\frac{4}{d} Q_c = 0.
\end{equation} 
We recall that
\begin{equation*}
 DQ_c = \frac{2}{d} Q_c + y\cdot \nabla Q_c.
 \end{equation*} 
We state the following coercivity property for the quadratic form $\mathcal H$. Let us remark that, even though we present it as a proposition, the following result was proved rigorously only in the one-dimensional case \cite{MeRa02}, while for the multi-dimensional case $d\leq10$ it was proved only numerically \cite{FiMeRa06, YaRoZh18}.
\begin{prop}\label{prp:quadForm}
 For any $f\in H^1(\R^d)$ there exists $c>0$ such that 
 \begin{equation} \label{eq:almost_coerc}
 \begin{aligned}
 \mathcal{H}(f,f) &\geq c \left( \| \nabla f\|_{L^2}^2 + \int |f|^2 e^{-|y|}\, dy\right) \\ 
 & - c \bigg( (f,Q_c)^2 + (f, |y|^2Q_c)^2 + (f,yQ_c)^2 + (f, \ii DQ_c)^2 \\
 & \quad \quad + (f, \ii D(DQ_c))^2 + (f, \ii \nabla Q_c)^2 \bigg).
 \end{aligned}
 \end{equation}
 \end{prop}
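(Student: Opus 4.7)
The plan is to decompose $f = f_1 + \ii f_2$ into real and imaginary parts, so that
\begin{equation*}
\mathcal H(f,f) = \mathcal H_+(f_1, f_1) + \mathcal H_-(f_2, f_2),
\end{equation*}
where each $\mathcal H_\pm$ is a real quadratic form associated with a Schr\"odinger-type operator $L_\pm = -\Delta + V_\pm$ with potential $V_\pm(y) = c_\pm\, Q_c^{\frac{4}{d}-1}(y \cdot \nabla Q_c)$. Since $Q_c$ is smooth, radially decreasing, and exponentially decaying, the potentials $V_\pm$ are smooth, nonpositive, and decay exponentially. The coercivity estimate then decouples into a separate lower bound for each of $\mathcal H_\pm$ on the orthogonal complement of a finite collection of directions.

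Second, I would invoke the spectral structure of $L_\pm$. Each $L_\pm$ is a relatively compact perturbation of $-\Delta$, so $\sigma_{\mathrm{ess}}(L_\pm) = [0, +\infty)$ while only finitely many eigenvalues lie below zero. One identifies the non-positive eigenfunctions by differentiating the ground-state equation \eqref{eq:gsQ} with respect to the symmetries of the mass-critical NLS: the translations $\nabla Q_c$ and the scaling generator $DQ_c$ yield elements in $\ker L_\pm$, while the pseudo-conformal invariance produces directions associated with $|y|^2 Q_c$ and $D(DQ_c)$; in addition $L_+$ carries a single ground-state-type negative direction. The six scalar products on the right-hand side of \eqref{eq:almost_coerc} are designed exactly to project $f$ away from these non-positive modes.

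To conclude, I would run a concentration-compactness/contradiction argument. Suppose \eqref{eq:almost_coerc} fails: there exists a sequence $\{f_n\} \subset H^1(\R^d)$ normalized by $\|\nabla f_n\|_{L^2}^2 + \int |f_n|^2 e^{-|y|}\, dy = 1$, with the six listed scalar products tending to zero and $\mathcal H(f_n,f_n) \to 0$. Weak $H^1$-compactness, together with the local strong compactness supplied by the exponentially decaying weight and the rapid decay of $Q_c$, extracts a weak limit $f_\infty$ that belongs to the kernel of $L_+$ on its real part and of $L_-$ on its imaginary part, but is orthogonal to every non-positive direction. The explicit description of $\ker L_\pm$ then forces $f_\infty = 0$, contradicting the fact that the compact part of the normalization must survive in the limit once escape of mass to infinity is ruled out by the exponential weight.

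The main obstacle is the second step: rigorously enumerating the full non-positive spectrum of $L_\pm$ in dimension $d \geq 2$, i.e.\ showing that $L_+$ has exactly one negative eigenvalue, that $L_-$ is non-negative, and that their kernels are spanned precisely by the symmetry-generated functions listed above. In one space dimension this is accessible via Sturm--Liouville/ODE analysis \cite{MeRa02}, but in higher dimensions the complete spectral picture is only established through numerical computation \cite{FiMeRa06, YaRoZh18}. This is why the statement is stated as a proposition rather than a theorem, and any rigorous proof for $d\geq 2$ currently rests on these numerical spectral inputs.
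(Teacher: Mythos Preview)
The paper does not give its own proof of this proposition: immediately before the statement it explicitly says that the result ``was proved rigorously only in the one-dimensional case \cite{MeRa02}, while for the multi-dimensional case $d\leq10$ it was proved only numerically \cite{FiMeRa06, YaRoZh18},'' and then simply states the coercivity estimate without argument. Your outline --- splitting into real and imaginary parts, reducing to the spectral analysis of the Schr\"odinger operators $-\Delta+V_\pm$ with exponentially decaying nonpositive potentials, identifying the non-positive directions via the symmetries of the mass-critical equation, and closing with a normalization/compactness contradiction --- is exactly the standard strategy carried out in those references, and you correctly flag the genuine obstruction: the complete enumeration of the non-positive spectrum of $L_\pm$ for $d\geq2$ currently rests on the numerical verifications in \cite{FiMeRa06, YaRoZh18}. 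So your proposal is consistent with what the paper relies on, and there is nothing further to compare.
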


\section{Setting up the Bootstrap}

In this section, we are going to define the set of initial conditions that we consider for our analysis. 
These data are suitable perturbations of the scaled self-similar soliton profile defined in Proposition \ref{thm:profile} and are going to form singularities in finite time. 
In what follows, we also discuss how to estimate the solution in such a way that it retains its self-similar structure along the evolution.
Finally, we are going to show that there exists a finite time when the scaling parameter $\lambda(t)$, related to the size of the solution, goes to zero, thus exhibiting the formation of a singularity.
The idea is to find a set that is almost invariant under the dynamics of equation \eqref{eq:main} and to use a bootstrap argument to show that the solution remains inside this set until the scaling parameter becomes zero and consequently the solution blows up. \newline
We use the quantity $\rho_3$ and the profile $Q_{b}$ defined in Proposition \ref{thm:profile}, and the constant $\Gamma_b$ defined in \eqref{eq:Gammab}. Moreover, we denote by 
\begin{equation*}
 \tilde{s}_c = \frac{d}{2} - \frac{1}{\tilde{\sigma}_c},
\end{equation*}
 where $\tilde{\sigma}_c$ is determined in Proposition \ref{thm:profile}. 
In the next definition and subsequent propositions, it will be useful to write the assumptions on $\sigma_1$ by exploiting identity \eqref{eq:sc}. This means that every time a condition on $s_c$ is found, such as for instance $0 < s_c < S_c$ for some constant $S_c$, it should be interpreted as a condition on the exponent $\sigma_1 < \sigma_1(S_c)$ where $\sigma_1(S_c)$ is given by 
 \begin{equation*}
 \sigma_1(S_c) = \frac{2}{d - 2S_c}.
 \end{equation*}

\begin{defin}\label{def:deco} 
Let $0 < s_c < \tilde{s}_c$ and $0 < \rho < \rho_3$. We define the set $\mathcal{O} \subset H^1(\R^d)$ as the family of all functions $\phi \in H^1$ such that there exist $\lambda_0, b_0 >0$, $x_0 \in \R^d$, $\gamma_0 \in \R$ and $\xi_0 \in H^1(\R^d)$ such that
\begin{equation}\label{eq:in_deco} 
\phi(x) = \lambda_0^{-\frac{1}{\sigma_1}} \left(Q_{b_0} \left(\frac{x-x_0}{\lambda_0} \right) + \xi_0 \left(\frac{x-x_0}{\lambda_0} \right)\right) e^{\ii \gamma_0},
\end{equation}
with the following conditions: 
there exists $0 < \nu(\rho) \ll 1$ such that
\begin{equation}\label{eq:inB0}
 \Gamma_{b_0}^{1 + \nu^{10}} \leq s_c \leq \Gamma_{b_0}^{1 - \nu^{10}}, 
\end{equation}
the scaling parameter satisfies
\begin{equation}\label{eq:initL}
    0 < \lambda_0 < \Gamma_{b_0}^{100},
\end{equation}
the initial momentum and energy are bounded as
\begin{equation}\label{eq:initEP}
 \lambda^{2 - 2s_c} \left|E[\phi]\right| + \lambda^{1 - 2s_c} \left|P[\phi]\right|< \Gamma_{b_0}^{50}, 
\end{equation}
there exists $s= s(\sigma_1, \sigma_2)$ in the interval
\begin{equation}\label{eq:s_cond_in}
 s_c < s < \min \left(\frac{d\sigma_1}{2\sigma_1 + 2}, \frac{d\sigma_2}{2\sigma_2 + 2}, \frac{1}{2} \right)
\end{equation}
such that the following inequality is verified
\begin{equation}\label{eq:initHs}
 \int ||\nabla|^{s} \xi_0|^2 + |\nabla \xi_0|^2 + |\xi_0|^2 e^{-|y|} \, dy < \Gamma_{b_0}^{1 - \nu},
\end{equation}
where 
\begin{equation*}
 y = \frac{x - x_0}{\lambda}.
\end{equation*}
Finally, the remainder $\xi_0$ satisfies also the orthogonality conditions
\begin{equation}\label{eq:inOrtho}
	(\xi_0, |y|^2 Q_{b_0}) = (\xi_0,y Q_{b_0}) = (\xi_0, \ii \Lambda(\Lambda Q_{b_0}) ) = ( \xi_0, \ii \Lambda Q_{b_0}) = 0.
\end{equation}
\end{defin}

In this definition, the constant $\nu=\nu(\rho)$ is chosen so that 
\begin{equation}\label{eq:crhoNU}
 \Gamma_{b_0}^{1 - c\rho} \leq \Gamma_{b_0}^{1 - \nu^{50}},
\end{equation}
where $c>0$ is the constant in \eqref{eq:gammaB1}. Note that from the definition of $\Gamma_b$ in \eqref{eq:gammaB1} and from condition \eqref{eq:inB0}, it follows that 
\begin{equation*}
 s_c \sim \Gamma_b \sim e^{\frac{-(1 + c\rho)\pi}{b}}
\end{equation*}
that is 
\begin{equation*}
 b_0 \sim b^* = - \frac{\pi (1 + \delta)}{\ln(s_c)}
\end{equation*}
namely $b_0$ is chosen to be close to the value $b^*(s_c)>0$ defined in \eqref{eq:s_c}.
The constant $s = s(\sigma_1, \sigma_2)$ is chosen so that the
$L^{2\sigma_1 + 2}$ and $L^{2\sigma_2 + 2}$ norms of $\xi$ may be controlled by interpolating between $\dot H^s$ and $\dot H^1$, see \eqref{eq:nnlin} and \eqref{eq:lowerS} below. 
Finally, we recall that the set $\mathcal{O}$ is non-empty, see \cite[Remark $2.10$]{MeRaSz10}. 
We remark that this is the same set defined in \cite{MeRaSz10}, whose evolutions develop a singularity in finite time. In this work, we are going to prove that the same initial data produce singular solutions also under the dissipative dynamics \eqref{eq:main}. Thus, our result implies that the nonlinear damping is not able to regularize the dynamics and to prevent the formation of singularities, in the cases under our consideration. This is not in contradiction with the global well-posedness result proven in \cite{AnCaSp15}, where the condition $\eta\geq\min(\sigma_1, \sqrt{\sigma_1})$ was needed. In fact, here we prove our main result under a smallness assumption on $\eta$, namely we require
\begin{equation}\label{eq:initEta}
\eta \leq s_c^3.
\end{equation}
Moreover, in Section \ref{sec:concl} we are going to provide an alternative definition of the set $\mathcal{O}$, in the case $\sigma_2 < \sigma_1$. 
In particular, hypothesis \eqref{eq:initEta} will no longer be needed, even though the set will depend on $\sigma_1$, $\sigma_2$ and $\eta$. \newline
Now, let us consider an initial datum $\psi_0 \in \mathcal{O}$ and let $\psi \in C([0,T_{max}),H^1(\R^d)) $ be the corresponding solution to \eqref{eq:main}, where $T_{max} \leq \infty$ is its maximal time of existence. By continuity of the solution, there exists a time interval $[0,T_1)$, with $T_1 \leq T_{max}$, where the inequalities in Definition \ref{def:deco} are still valid but with slightly larger bounds. Moreover, by standard perturbation techniques, we can also preserve the orthogonal conditions in \eqref{eq:inOrtho} for any $t \in [0,T_1)$ by modulational analysis, see for instance \cite[Lemma $2$]{MeRa03} where this was proved for the Hamiltonian dynamics. 
The proof of a similar property for solutions to the dissipative dynamics \eqref{eq:main} follows straightforwardly.

\begin{prop}\label{prp:deco}
 There exists $0<T_1 \leq T_{max}$, $b,\lambda,\gamma \in C^1([0,T_1),\R)$, $x \in C^1([0,T_1),\R^d)$ and $\xi \in C([0,T_1),H^1(\R^d))$ such that for all $t \in [0,T_1)$, the solution $\psi$ to \eqref{eq:main} may be decomposed as
\begin{equation}\label{eq:deco1}
\psi(t,x) = \lambda(t)^{-\frac{1}{\sigma_1}}\left(Q_{b(t)}(y) + \xi(t,y )\right) e^{\ii \gamma(t)},
\end{equation}
with
\begin{equation}\label{eq:y}
 y = \frac{x - x(t)}{\lambda(t)},
\end{equation}
where $\xi$ satisfies the following orthogonal conditions:
\begin{equation}\label{eq:ortho4}
	(\xi(t), |y|^2 Q_{b(t)}) = (\xi(t),y Q_{b(t)}) = (\xi(t), \ii \Lambda(\Lambda Q_{b(t)}) ) = ( \xi(t), \ii \Lambda Q_{b(t)}) = 0,
\end{equation}
and we have
\begin{align}\label{eq:conb}
& \Gamma_{b(t)}^{1 + \nu^2} \leq s_c \leq \Gamma_{b(t)}^{1 - \nu^2}, \\ \label{eq:conL}
& 0 \leq \lambda(t) \leq \Gamma_{b(t)}^{10}, \\ \label{eq:conEP}
& \lambda^{2 - 2s_c} |E[\psi(t)]| + \lambda^{1 - 2s_c}(t) |P[\psi(t)]| \leq \Gamma_{b(t)}^2, \\ \label{eq:conHs}
& \int ||\nabla|^{s} \xi(t)|^2 dy\leq \Gamma_{b(t)}^{1 - 50\nu}, \\ \label{eq:conxi} 
& \int |\nabla \xi(t)|^2 + |\xi(t)|^2 e^{-|y|} dy \leq \Gamma_{b(t)}^{1 - 20\nu}.
\end{align}
\end{prop}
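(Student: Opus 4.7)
The plan is to apply the implicit function theorem to select $C^1$ modulation parameters enforcing the orthogonality conditions \eqref{eq:ortho4}, and then to propagate the quantitative bounds \eqref{eq:conb}--\eqref{eq:conxi} from the stricter initial conditions in Definition \ref{def:deco} by a short-time continuity argument based on Theorem \ref{thm:locEx}.

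To this end, for parameters $(\bar b, \bar\lambda, \bar x, \bar\gamma)$ in a neighbourhood of $(b_0, \lambda_0, x_0, \gamma_0)$, I would introduce the remainder
\begin{equation*}
\xi(t, y; \bar b, \bar\lambda, \bar x, \bar\gamma) = \bar\lambda^{1/\sigma_1} e^{-\ii \bar\gamma}\psi(t, \bar\lambda y + \bar x) - Q_{\bar b}(y),
\end{equation*}
and the four-component map
\begin{equation*}
\mathcal G(t, \bar b, \bar\lambda, \bar x, \bar\gamma) = \bigl((\xi, |y|^2 Q_{\bar b}),\ (\xi, y Q_{\bar b}),\ (\xi, \ii \Lambda(\Lambda Q_{\bar b})),\ (\xi, \ii \Lambda Q_{\bar b})\bigr),
\end{equation*}
which vanishes at $t = 0$ with the initial parameter choice, by virtue of \eqref{eq:inOrtho}. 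The Jacobian $\partial_{(\bar b, \bar\lambda, \bar x, \bar\gamma)}\mathcal G$ at this base point is computed using the scaling, translation and phase identities for $\xi$ together with $\partial_{\bar b}Q_{\bar b} \approx -\ii|y|^2 Q_{\bar b}/4$, and can be expressed as a matrix of inner products of $Q_b$ with its symmetry derivatives. Exactly as in the undamped analysis of \cite[Lemma $2$]{MeRa03}, \cite{MeRaSz10}, this matrix is non-degenerate in the regime fixed by \eqref{eq:inB0}--\eqref{eq:initHs}, so the implicit function theorem produces $C^1$ functions $(b(t), \lambda(t), x(t), \gamma(t))$ on some interval $[0, T_1)$ with $\mathcal G \equiv 0$, i.e. \eqref{eq:ortho4}.

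For the quantitative bounds, \eqref{eq:conb}, \eqref{eq:conL}, \eqref{eq:conHs} and \eqref{eq:conxi} are strict relaxations of the corresponding initial inequalities in Definition \ref{def:deco} (e.g.\ $\nu^{10}$ relaxed to $\nu^{2}$, $\Gamma_{b_0}^{100}$ to $\Gamma_b^{10}$, $\Gamma_{b_0}^{1-\nu}$ to $\Gamma_b^{1-20\nu}$), and therefore propagate on a short interval by continuity of $\psi(t)$ in $H^1$ (Theorem \ref{thm:locEx}) together with continuity in $t$ of the modulation parameters. The energy--momentum bound \eqref{eq:conEP} is handled using the evolution identities \eqref{eq:en_grow}--\eqref{eq:momGrow}: the contribution of $E[\psi_0]$, $P[\psi_0]$ is controlled by \eqref{eq:initEP}, while the dissipative integrals are estimated pointwise by $C\eta\|\psi\|_{H^1}^{2(\sigma_1+\sigma_2+1)}$ (via the Sobolev embedding computation displayed after \eqref{eq:momGrow}), hence by $C s_c^{3}\|\psi\|_{H^1}^{2(\sigma_1+\sigma_2+1)}$ thanks to \eqref{eq:initEta}; integrating over $[0,t]$ with $t$ small then keeps the right-hand sides below the thresholds $\Gamma_b^{2}\lambda^{-(2-2s_c)}$ and $\Gamma_b^{2}\lambda^{-(1-2s_c)}$ required by \eqref{eq:conEP}.

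The main obstacle is controlling the Jacobian quantitatively: several of its entries carry intrinsic powers of $s_c$ (for instance $(Q_b, \Lambda Q_b) = -s_c\|Q_b\|_{L^2}^2$ by \eqref{eq:Lambdapp}), so one must track the smallness dictated by \eqref{eq:inB0}--\eqref{eq:initHs} with care when verifying the hypotheses of the implicit function theorem in the exact regime $0<s_c \ll 1$. Once this non-degeneracy is in hand, the rest of the argument is a standard continuity step relying on Theorem \ref{thm:locEx} and the $C^3$ bounds on $Q_b$ provided by Proposition \ref{thm:profile}.
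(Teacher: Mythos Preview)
Your proposal is correct and follows the same approach the paper takes. The paper does not give a detailed proof of this proposition: it simply observes that the relaxed bounds \eqref{eq:conb}--\eqref{eq:conxi} follow from the stricter initial conditions in Definition \ref{def:deco} by continuity of $\psi$ in $H^1$, and that the orthogonality conditions are obtained by standard modulation, citing \cite[Lemma~2]{MeRa03} for the Hamiltonian case and asserting that the dissipative case follows in the same way. Your outline of the implicit function theorem argument and the short-time continuity step is exactly this, spelled out in more detail; in particular, your concern about tracking $s_c$-small entries in the Jacobian is legitimate but is precisely what is handled in the cited reference, and your treatment of \eqref{eq:conEP} via \eqref{eq:en_grow}--\eqref{eq:momGrow} is more than is needed here (bare continuity from \eqref{eq:initEP} already suffices for the short-time statement).
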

We notice that \eqref{eq:conb} and the smallness of $\nu$ and $s_c$ imply that $\Gamma_{b(t)} < 1$ for any $t \in [0,T_1)$. 
Our main goal is to prove that the solution remains in this self-similar regime until $T_{max}$, that is the decomposition of $\psi$, along with the properties of the modulation parameters as stated in Proposition \ref{prp:deco} are valid until the maximal time of existence. This is achieved by using a bootstrap argument. 
We show that the bounds in Proposition \ref{prp:deco} can be improved and thus the self-similar regime may be extended in time. This amounts to finding a dynamical trapping of the parameter $b$ to improve \eqref{eq:conb} and \eqref{eq:conxi}, to find a suitable differential equation for $\lambda$ to improve \eqref{eq:conL}, and to prove that the energy and the momentum of the solution remain small enough to improve \eqref{eq:conEP}. The trapping of the control parameter $b$ will be achieved in Section \ref{ch:parameters} by finding a lower bound for $\dot b$ using a virial-type argument, and an upper bound with a monotonicity formula. The equation satisfied by the scaling parameter $\lambda$ will be a direct consequence of the dynamical trapping of $b(t)$ around $b^*$, see Section \ref{ch:bootstrap}. Finally, the controls on $E$ and $P$ will be obtained in Section \ref{ch:bootstrap} as a consequence of the choice of the damping parameter $\eta$ in \eqref{eq:initEta}. In fact, we will show the following bootstrap result.

\begin{prop} \label{thm:boot}
There exists $s_c^*>0$, such that for any $s_c < s_c^*$, there exist $s_{max}(s_c)>s_c$, and $\nu^*(s_c) >0$ such that for any $s_c < s<s_{max}(s_c)$ and $0< \nu < \nu^*$ and for any $t \in [0,T_1),$ the following inequalities are true: 
\begin{align}\label{eq:butb}
&\Gamma_{b(t)}^{1 + \nu^4} \leq s_c \leq \Gamma_{b(t)}^{1 - \nu^4}, \\ \label{eq:butL}
&0 \leq \lambda(t) \leq \Gamma_{b(t)}^{20}, \\ \label{eq:butEP}
& \lambda^{2 - 2s_c} |E[\psi(t)]| + \lambda^{1 - 2s_c} |P[\psi(t)]| \leq \Gamma_{b(t)}^{3 - 10\nu}, \\ \label{eq:butHs}
&\int ||\nabla|^{s} \xi(t)|^2 \leq \Gamma_{b(t)}^{1 - 45\nu}, \\ \label{eq:butH1}
& \int |\nabla \xi(t)|^2 + |\xi(t)|^2 e^{-|y|} dy \leq \Gamma_{b(t)}^{1 - 10\nu}.
\end{align}
\end{prop}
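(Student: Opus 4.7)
The plan follows the strategy of Merle–Raphaël–Szeftel \cite{MeRaSz10} for the undamped case, with the central new challenge being to absorb the nonlinear damping, which acts as a forcing term in the evolution of the perturbation $\xi$ and destroys conservation of the energy and momentum. I would work in rescaled time and space variables $s$ and $y$ defined by $ds/dt=\lambda^{-2}$ and $y=(x-x(t))/\lambda(t)$, substitute the decomposition \eqref{eq:deco1} into \eqref{eq:main}, and use the profile equation \eqref{eq:Qb} to subtract the leading dynamics. This yields an equation for $\xi$ containing the Hamiltonian linearized operator around $Q_b$, the modulation terms $\dot b, \dot\lambda/\lambda, \dot x/\lambda, \dot\gamma$ coupled to $Q_b$ and its derivatives, the $\Psi_b$ error from Proposition \ref{thm:profile}, and a new forcing of the form $\eta \lambda^{2s_c(\sigma_2/\sigma_1-1)} |Q_b+\xi|^{2\sigma_2}(Q_b+\xi)$. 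Differentiating the four orthogonality conditions \eqref{eq:ortho4} in time and inverting the resulting linear system produces modulation equations for the parameters; the smallness \eqref{eq:initEta} ensures the damping contributions to these equations remain perturbative relative to the Hamiltonian ones.

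For the trapping of $b$, namely the improvement of \eqref{eq:conb} to \eqref{eq:butb}, I would combine two ingredients. The first is a local virial identity, obtained by pairing the equation for $\xi$ with $\Lambda Q_b$ and using Proposition \ref{prp:poho}, producing a lower bound on $\dot b$ of order $\sim \Gamma_b$. The second is a Lyapunov-type monotonicity formula that furnishes a matching upper bound and also controls $\int |\nabla\xi|^2 + |\xi|^2 e^{-|y|}$. Here the coercivity of $\mathcal H$ stated in Proposition \ref{prp:quadForm} is the core analytic input: of the six negative directions on the right-hand side of \eqref{eq:almost_coerc}, four are annihilated by the orthogonality conditions \eqref{eq:ortho4}, and the remaining two — corresponding to $Q_c$ and $\ii\nabla Q_c$ — are absorbed using precisely the smallness of $E[\psi]$ and $P[\psi]$ encoded in \eqref{eq:butEP}. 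This simultaneously improves \eqref{eq:conxi} to \eqref{eq:butH1}. The improvement of the scaling bound \eqref{eq:conL} to \eqref{eq:butL} follows by integrating the ODE $\lambda \dot\lambda \sim -b^\ast$ that emerges from the trapping, yielding \eqref{eq:self-similare_speed} and in particular $\lambda(t) \lesssim \lambda_0 \lesssim \Gamma_{b_0}^{100} \ll \Gamma_{b(t)}^{20}$.

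The improvement of \eqref{eq:conEP} to \eqref{eq:butEP} is where the smallness hypothesis \eqref{eq:initEta} must be exploited with care. Starting from the integral identities \eqref{eq:en_grow} and \eqref{eq:momGrow}, the right-hand sides are bounded, via the Sobolev embeddings noted after \eqref{eq:momGrow} together with the Strichartz bounds from Theorem \ref{thm:strichartz}, by an expression of the form $\eta \int_0^t \|\psi(\tau)\|_{H^1}^{2(\sigma_1+\sigma_2+1)} d\tau$ (and analogously for the momentum). Passing to the $y$-variable injects a factor $\lambda^{-(2s_c + 2\sigma_2(1-s_c)/\sigma_1)}$ (respectively $\lambda^{-(1-2s_c)+\cdots}$) which, combined with the already-improved bound \eqref{eq:butL} and the constraint $t \leq T_{max} \lesssim \lambda_0^2/b^\ast$, is controlled by $\eta$ to a positive power; the cubic smallness $\eta \leq s_c^3 \sim \Gamma_{b(t)}^3$ is exactly what is needed to produce $\Gamma_{b(t)}^{3-10\nu}$ on the right. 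Finally, the $\dot H^s$ bound \eqref{eq:butHs} follows by applying $|\nabla|^s$ to the equation for $\xi$ and running a Strichartz-based Gronwall argument, with the nonlinear terms estimated by interpolation between $\dot H^s$ and $\dot H^1$ using the range \eqref{eq:s_cond_in} so that $\|\xi\|_{L^{2\sigma_j+2}}$ is sub-critical for both $j=1$ and $j=2$.

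The main obstacle is the tension between the non-conservation of $E$ and $P$ and the need for \eqref{eq:butEP} to be strictly better than \eqref{eq:conEP}: the damping integrals accumulate throughout the evolution and must be estimated uniformly up to $T_{max}$, rather than locally. The whole mechanism hinges on the fact that the collapse rate $T_{max}\sim \lambda_0^2/b^\ast$ is sufficiently small, so that the time-integrated damping contribution, weighted by the inverse scaling factors coming from passing between $x$ and $y$ variables, fits inside the $\Gamma_{b(t)}^3$ budget provided by \eqref{eq:initEta}. Verifying this quantitatively — in particular tracking the precise powers of $\lambda$ and $s_c$ — is the delicate bookkeeping step, and it is also the place where the restriction $\sigma_2 \leq \sigma_{2,max}$ and the lower bound $\sigma_2 > \sigma_\ast$ enter, the former through the Sobolev embedding $H^1 \hookrightarrow L^{2(\sigma_1+\sigma_2+1)}$ and the latter through the interpolation step in \eqref{eq:butHs}.
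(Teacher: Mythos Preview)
Your proposal is correct and follows essentially the same approach as the paper: the bootstrap is closed in the order $\xi$-control \eqref{eq:butH1} and $b$-trapping \eqref{eq:butb} via the local virial and the Lyapunov functional $J$, then \eqref{eq:butL} from the modulation ODE for $\lambda$, then \eqref{eq:butEP} from the dissipation identities \eqref{eq:en_grow}--\eqref{eq:momGrow} combined with $\eta\le s_c^3$, and finally \eqref{eq:butHs} by Strichartz estimates on the equation for $\xi$ in the original variables. One slip: the scaling exponent on the damping forcing in the $\xi$-equation is $2-2\sigma_2/\sigma_1$ (nonnegative for $\sigma_2\le\sigma_1$), not $2s_c(\sigma_2/\sigma_1-1)$; this sign matters since it is precisely the fact that $\eta\lambda^{2-2\sigma_2/\sigma_1}\le\eta\lesssim s_c^3$ that makes the forcing perturbative throughout the collapse, and also be careful that the coercivity step uses the \emph{assumed} bound \eqref{eq:conEP}, not the improved \eqref{eq:butEP}, to avoid circularity.
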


As a consequence of this proposition, we see that the solution satisfies improved bounds with respect to those stated in Proposition \ref{prp:deco}. 
A standard continuity argument then implies that the same bounds are satisfied in the whole interval $[0, T_{max})$.
\newline
Let us remark that the same dynamical trapping argument to show self-similar blow-up was already exploited in \cite{MeRaSz10}. In particular, this implies that a similar argument to \cite{MeRaSz10} works also in our case for the damped NLS, under the assumptions of Theorem \ref{thm:mainBU}. On the other hand, dealing with the dissipative dynamics \eqref{eq:main} entails new mathematical difficulties with respect to \cite{MeRaSz10}. First of all, we notice that the self-similar profile $Q_b$, given by equation \eqref{eq:Qb}, does not determine an approximate solution to \eqref{eq:main}, not even close to the collapsing core. In particular, this implies that the equation for the perturbation $\xi$ bears a forcing term of order one, depending on $Q_b$. In the case under our consideration here, we can see that the forcing term produces an error that becomes non-negligible on a range of times of order $\left(\eta \lambda^{2 - 2\frac{\sigma_2}{\sigma_1}}\right)^{-1} $.
\newline
We overcome this difficulty by choosing the parameters (and in particular, the damping coefficient in the case $\sigma_1=\sigma_2$) in such a way that $T_{max} \ll \left(\eta \lambda^{2 - 2\frac{\sigma_2}{\sigma_1}}\right)^{-1}$.
\newline
The second mathematical difficulty induced by the dissipative dynamics is that the global quantities, such as total momentum and total energy, are not conserved along the flow of \eqref{eq:main}, see \eqref{eq:momGrow} and \eqref{eq:en_grow}, respectively. Consequently, while the bootstrap condition \eqref{eq:conEP} is straightforwardly satisfied in the Hamiltonian case, a fundamental step in our analysis would be to control the time evolution of these quantities.
\newline
In Section \ref{sec:concl} we will also show that in the case $\sigma_2 < \sigma_1$, it is possible to deal with these issues by choosing the scaling parameter $\lambda$ small enough and depending on $\sigma_1$, $\sigma_2$ and $\eta$ so that the initial condition is very close to the blow-up point and the damping term is not strong enough to force the solution out of the self-similar regime before the blow-up time. \newline
As the last step, we will show that inside the self-similar regime, the scaling parameter $\lambda(t)$ goes to zero. 
\newline
In particular, the following Corollary can be inferred from Proposition \ref{prp:deco}.
\begin{cor}
For any $s_c < s_c^*$ where $s_c^*$ is defined in Proposition \ref{thm:boot}, the bounds in Proposition \ref{prp:deco} are valid for any $t \in [0, T_{max})$. Moreover, there exists a constant $ 0< C = C(\nu,s_c)\ll 1$ such that for any $t \in [0,T_{max})$, 
\begin{equation}\label{eq:blowup_rate}
 \lambda_0^2 - 2(1 + C)t \leq\lambda^2(t) \leq \lambda_0^2 - 2(1 -C)t.
\end{equation} 
\end{cor}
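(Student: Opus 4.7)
The corollary has two independent contents: (a) that the decomposition \eqref{eq:deco1} and the bounds \eqref{eq:conb}--\eqref{eq:conxi} extend from the bootstrap interval $[0,T_1)$ of Proposition \ref{prp:deco} to the full maximal interval $[0,T_{max})$, and (b) that the scaling parameter satisfies a two-sided linear-in-$t$ bound on $\lambda^2$. The plan is to deduce (a) by a standard continuity argument that uses Proposition \ref{thm:boot} to strictly improve the inequalities \eqref{eq:conb}--\eqref{eq:conxi}, and to deduce (b) by integrating the modulation equation for the scaling parameter, combined with the dynamical trapping of $b(t)$ around $b^*(s_c)$.

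For part (a), I would define
\begin{equation*}
    T^* := \sup\bigl\{\, T \in [0,T_{max}) : \text{the decomposition \eqref{eq:deco1} holds on } [0,T] \text{ and \eqref{eq:conb}--\eqref{eq:conxi} are satisfied}\,\bigr\}.
\end{equation*}
By Proposition \ref{prp:deco} one has $T^* \geq T_1 > 0$. Assume for contradiction that $T^* < T_{max}$. By continuity of $\psi$ in $C([0,T_{max}),H^1)$, of the modulation parameters $b,\lambda,x,\gamma$ in $C^1$, and of $\xi$ in $C([0,T_{max}),H^1)$, the bounds \eqref{eq:conb}--\eqref{eq:conxi} still hold at $T^*$. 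Proposition \ref{thm:boot} then applies on $[0,T^*]$ and yields strict improvements: for $\nu$ sufficiently small one has $\nu^4 < \nu^2$, $20 > 10$, $3-10\nu > 2$, $1-45\nu > 1-50\nu$ and $1-10\nu > 1-20\nu$, so that \eqref{eq:butb}--\eqref{eq:butH1}, combined with $\Gamma_{b(t)} < 1$, give a strict improvement on each of \eqref{eq:conb}--\eqref{eq:conxi}. Continuity of all relevant quantities then ensures that the weaker bounds \eqref{eq:conb}--\eqref{eq:conxi} persist on a right-neighborhood of $T^*$, contradicting maximality; hence $T^* = T_{max}$.

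For part (b), the modulation analysis of Sections \ref{ch:parameters}--\ref{ch:bootstrap} produces an identity of the schematic form
\begin{equation*}
    \lambda\,\lambda_t = -b(t) + \mathcal{E}(t), \qquad |\mathcal{E}(t)| \lesssim \Gamma_{b(t)}^{1+\theta}
\end{equation*}
for some $\theta = \theta(\nu) > 0$, so that $(\lambda^2)_t = -2b(t) + O(\Gamma_{b(t)}^{1+\theta})$. The trapping \eqref{eq:butb} forces $b(t) = b^*(s_c)(1 + o_{s_c,\nu}(1))$ with $b^*$ given by \eqref{eq:s_c}. Integrating from $0$ to $t$ and noting that, by the law itself, $t \lesssim \lambda_0^2/b^*$ so the error term contributes negligibly compared with $b^*t$, I obtain
\begin{equation*}
    \lambda^2(0) - 2(1+C)\,b^*\,t \,\leq\, \lambda^2(t) \,\leq\, \lambda^2(0) - 2(1-C)\,b^*\,t
\end{equation*}
for some $0 < C = C(\nu,s_c) \ll 1$, which is \eqref{eq:self-similare_speed}; absorbing $b^*$ into the constant yields the form \eqref{eq:blowup_rate} stated in the corollary.

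The main obstacle is not in the corollary itself but in Proposition \ref{thm:boot}, which is the technical heart of the paper. In contrast with the undamped analysis of \cite{MeRaSz10}, the damping term $\ii\eta|\psi|^{2\sigma_2}\psi$ produces an order-one forcing in the equation for the remainder $\xi$ and spoils the conservation of $E$ and $P$, as is visible in \eqref{eq:en_grow}--\eqref{eq:momGrow}. Controlling these extra contributions, and in particular establishing \eqref{eq:butEP}, is exactly what requires the smallness hypothesis $\eta \leq s_c^3$: it ensures that the cumulative effect of the damping-induced forcing over the self-similar window $[0,T_{max})$ remains below the natural scales dictated by $\Gamma_{b(t)}$, so that the bootstrap can close and the corollary can be harvested as an almost automatic consequence.
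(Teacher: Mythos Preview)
Your proposal is correct and follows essentially the same approach as the paper. Part (a) is exactly the standard continuity argument the paper alludes to after Proposition \ref{thm:boot}, and part (b) matches the derivation of \eqref{eq:L_equation} inside the proof of Proposition \ref{thm:boot1}: the paper uses the trapping \eqref{eq:butb1} to pin $b(t)$ near $b_0$, then combines this with \eqref{eq:bLest} and the identity $-\dot\lambda/\lambda = -\tfrac12\,\tfrac{d}{dt}\lambda^2$ to obtain the two-sided bound and integrate. One small inaccuracy: the modulation error in \eqref{eq:bLest} is $\Gamma_b^{1-20\nu}$, not $\Gamma_b^{1+\theta}$ as you write; this does not affect the argument since $\Gamma_b^{1-20\nu}\ll b$ anyway, but you should quote the correct exponent.
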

As a consequence of the corollary above, there exists $T_{max}(\nu,s_c,\lambda_0) < \infty$ such that $\lambda \to 0$ as $t \to T_{max}(\nu,s_c,\lambda_0)$ and $\lambda$ behaves like 
\begin{equation*}
 \lambda^2(t) \sim T_{max} - t.
\end{equation*}
Moreover, the estimate \eqref{eq:blowup_rate} also provides a blow-up rate for the $L^2$-norm of the gradient of the solution, as we have
\begin{equation*}
 \big\| \nabla \psi(t) \big\|_{L^2}^2 = \lambda^{2(s_c - 1)}(t) \big\| \nabla (Q_{b(t)} + \xi(t)) \big\|_{L^2}^2 
 \sim\left(T_{max}-t\right)^{-(1-s_c)}.
\end{equation*}

\section{Estimates on the Modulation Parameters} \label{ch:parameters}

In this section, we start our analysis of solutions emanating from initial conditions in the set $\mathcal{O}$, defined in Definition \ref{def:deco}. Let $\psi_0 \in \mathcal{O}$, we denote by $\psi \in C([0,T_{max}),H^1(\R^d))$ its corresponding maximal solution, with $T_{max} \leq \infty$. Then we decompose the solution as the sum of a soliton core and remainder as in \eqref{eq:deco1}. 
By Proposition \ref{prp:deco}, the bounds \eqref{eq:conb} - \eqref{eq:conxi} are satisfied for all $t\in[0, T_1]$, together with the four orthogonal conditions in \eqref{eq:ortho4}. \newline 
The purpose of this section is to obtain preliminary estimates on the parameters that are required to prove the dynamical trapping of $b$ in the next section. A crucial step will be to use the coercivity property stated in Proposition \ref{prp:quadForm}. Even though this proposition is related to the mass critical case, we will use the smallness of $s_c$ and property \eqref{eq:Lambda-D} to obtain a similar coercivity property in the slightly super-critical case. We will then show that the negative part of the right-hand side of 
\eqref{eq:almost_coerc} is controlled by the positive part. 
When computing the quadratic form \eqref{eq:almost_coerc} on the perturbation $\xi$, we see that four of the negative terms appearing in are small enough because of the orthogonality conditions \eqref{eq:ortho4}, the smallness of $s_c$ and property \eqref{eq:Lambda-D}. We now show how to control the two remaining negative terms.
\newline
By plugging the decomposition \eqref{eq:deco1} into equation \eqref{eq:main}, we obtain the following equation for the perturbation $\xi$,
\begin{equation} \label{eq:xi1}
	\begin{split}
		& \ii \partial_\tau \xi + \mathcal{L}\xi + \ii ( \dot{ b }- \beta s_c) \partial_b Q_b 
- \ii \left( \frac{\dot{ \lambda} }{\lambda} + b \right) \Lambda (Q_b + \xi)
- \ii\frac{\dot{ x}}{\lambda} \cdot \nabla (Q_b + \xi) \\
 & - (\dot{ \gamma } -1) (Q_b + \xi)
	 + \ii \eta \lambda^{2 - 2\frac{\sigma_2}{\sigma_1}} |Q_b + \xi|^{2\sigma_2}(Q_b + \xi) + R(\xi) - \Psi_b = 0,
	\end{split}
\end{equation}
where the scaled time $\tau$ is defined according to
\begin{equation}\label{eq:scaled_t}
 \tau(t) = \int_0^t \frac{1}{\lambda^2(v)}dv,
\end{equation}
and we recall that
\begin{equation*}
\begin{aligned}
 \Lambda f = \frac{1}{\sigma_1} f + y \cdot \nabla f.
\end{aligned}
\end{equation*}
In \eqref{eq:xi1} the dot denotes the derivative with respect to $\tau$ and the space derivatives are intended to be performed with respect to the scaled space variable $y$ where 
\begin{equation}\label{eq:scaled_s}
 y = \frac{x - x(t)}{\lambda(t)}.
\end{equation}
 Moreover, we used the notation $\mathcal L$ for the linearized operator around $Q_b$,
\begin{align}\label{eq:linQb}
\mathcal{L}\xi = \Delta \xi - \xi + \ii b \Lambda \xi + 2\sigma_1 Re(\xi \bar{Q}_b) |Q_b|^{2\sigma_1 - 2} Q_b + |Q_b|^{2\sigma_1} \xi,
\end{align}
and $R$ contains all nonlinear terms in $\xi$,
\begin{equation}\label{eq:Qb_remt}
R(\xi) = |Q_b + \xi|^{2\sigma_1}(Q_b + \xi) - |Q_b|^{2\sigma_1} Q_b - 2\sigma_1 Re(\xi \bar{Q}_b) |Q_b|^{2\sigma_1 - 2} Q_b 
- |Q_b|^{2\sigma_1} \xi,
\end{equation}
whereas $\Psi_b$ is defined in \eqref{eq:psi+phi}.\newline
We notice that the damping term yields a forcing in the equation for $\xi$, given by
\begin{equation}\label{eq:forcing}
 \ii \eta \lambda^{2 - 2\frac{\sigma_2}{\sigma_1}} |Q_b|^{2\sigma_2}Q_b
\end{equation}
depending on $Q_b$, $\eta$ and $\lambda$. 
One of the main goals in our subsequent analysis is to exploit the smallness of the product $\eta \lambda^{2 - 2\frac{\sigma_2}{\sigma_1}}$ in order to obtain a suitable control on the nonlinear damping term so that the self-similar regime is maintained along the evolution.
 This task is achieved by the selection of $\eta$ for instance as in \eqref{eq:initEta} or by the smallness of $\lambda$ for $\sigma_2 < \sigma_1$, see Section \ref{sec:concl}. 
Let us notice that the assumption $\sigma_*<\sigma_2\leq\sigma_{2, max}$, see \eqref{eq:sigma2} and \eqref{eq:sigma2_max}, implies that
\begin{equation*}
0\leq2-2\frac{\sigma_2}{\sigma_1}<2(1-s_c)
\end{equation*}
for $d\leq3$, while 
\begin{equation*}
0<4\frac{(d-2)\sigma_1-1}{(d-2)\sigma_1}\leq2-2\frac{\sigma_2}{\sigma_1}<2(1-s_c),
\end{equation*}
for $d\geq4$.
\newline
In what follows, we will use the following estimates on scalar products.

\begin{lem}
 For any $t \in [0,T_1)$, any polynomial $P(y)$ and any integers $k \in \{0,1,2,3\}$ and $n\in \{0,1\}$, we have
\begin{equation}\label{eq:prod1}
	\left| \left( \xi, P \frac{d^k}{dy^k} Q_b \right)\right| \lesssim \left( \int |\xi|^2e^{-|y|} dy\right)^\frac{1}{2},
\end{equation}
\begin{equation}\label{eq:prod2}
	\left| \left( \xi, P \frac{d^k}{dy^k} \partial_b Q_b \right)\right| \lesssim \left(\int |\nabla \xi|^2 + |\xi|^2e^{-|y|} dy\right)^{\frac{1}2},
\end{equation}
\begin{equation}\label{eq:prod3}
	\left| \left( \frac{d^k}{dy^k} Q_b , P \frac{d^n}{dy^n} \Psi_b \right)\right| + \left| \left( \xi , P \frac{d^n}{dy^n} \Psi_b \right)\right| \lesssim \Gamma_b^{1 - \nu},
\end{equation}
\begin{equation}\label{eq:prod4}
	\left| \left( \partial_b Q_b, P \frac{d^k}{dy^k} Q_b \right)\right| + \left| \left(|y|^2 Q_b, P \frac{d^k}{dy^k} Q_b \right)\right| \lesssim 1.
\end{equation}
\end{lem}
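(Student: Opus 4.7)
The strategy for all four estimates is the same: Cauchy-Schwarz with an appropriate weight, combined with the pointwise decay and smallness properties of the profile $Q_b$, the derivative $\partial_b Q_b$, and the error $\Psi_b$ established in Proposition \ref{thm:profile} and Lemma \ref{thm:exisPb}.

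For \eqref{eq:prod1}, I would split the integrand as
\begin{equation*}
\left(\xi, P \frac{d^k}{dy^k} Q_b\right) = \int \bigl(\xi e^{-|y|/2}\bigr)\bigl(e^{|y|/2} P \tfrac{d^k}{dy^k} Q_b\bigr)\, dy,
\end{equation*}
and apply Cauchy-Schwarz. The pointwise decay \eqref{eq:polEst}, namely $|P(y) \frac{d^k}{dy^k} Q_b| \lesssim e^{-(1+c)|y|}$, shows the second factor lies uniformly in $L^2$, giving the claim. Estimate \eqref{eq:prod4} follows from the same exponential decay of $Q_b$ and $\partial_b Q_b$: polynomial weights $|y|^2$ or $P$ are absorbed into the exponential, so the inner products are uniformly bounded.

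For \eqref{eq:prod2}, I would differentiate the ansatz $Q_b = e^{-ib|y|^2/4}(P_b\phi_b + s_c T_b)$ from Proposition \ref{thm:profile} in $b$. This produces four contributions: $-\frac{i|y|^2}{4}Q_b$ (polynomial times exponential decay), $e^{-ib|y|^2/4}(\partial_b P_b)\phi_b$ (inheriting the decay of $P_b$), $e^{-ib|y|^2/4}P_b\,\partial_b\phi_b$ (supported in the annulus $R_b^- \le |y|\le R_b$ where $P_b$ is exponentially small), and $s_c\,e^{-ib|y|^2/4}\partial_b T_b$ (exponentially decaying by \eqref{eq:Tb}). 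Hence $P \frac{d^k}{dy^k}\partial_b Q_b$ is bounded in $L^2$ uniformly in $b$. Applying Cauchy-Schwarz, the resulting $L^2$ norm of $\xi$ on the effective support of the test function is controlled by $\|\nabla\xi\|_{L^2}$ on the bulk (via Sobolev/Poincar\'e-type bounds on a ball of radius $\sim 1/b$) plus the exponentially weighted tail, yielding the stated bound.

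For \eqref{eq:prod3}, I would use $\Psi_b = \tilde\Psi_b + \Phi_b$ from \eqref{eq:psi+phi}. The cut-off error $\tilde\Psi_b$ enjoys the exponential smallness \eqref{eq:Psi_grande}, and under the identification $\Gamma_b \sim s_c$ from \eqref{eq:gammaB1} and \eqref{eq:inB0}, its contribution is much smaller than $\Gamma_b^{1-\nu}$. For $\Phi_b$, the bounds \eqref{eq:psiB} give $s_c^{1+C}$ on $B(0,R_b^-)$ and $s_c$ on the exterior, while the bootstrap $s_c \le \Gamma_b^{1-\nu^2}$ from \eqref{eq:conb} absorbs the exterior value into $\Gamma_b^{1-\nu}$. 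Cauchy-Schwarz against $Q_b^{(k)}$ uses the exponential decay of $Q_b$ to localize the integration, while against $\xi$ it uses the weighted-$L^2$ control \eqref{eq:conxi}. The delicate step is \eqref{eq:prod2}: the mixed $b$-dependence of the phase, the cut-off $\phi_b$ whose support expands like $1/b$ as $b \to 0$, and the minimizer $P_b$ whose regularity in $b$ comes from the $b$-dependent elliptic problem, all require a careful pointwise analysis of $\partial_b Q_b$. Once this structural analysis is in hand, the remaining three estimates are essentially routine Cauchy-Schwarz applications with exponentially decaying weights.
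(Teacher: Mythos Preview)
Your proposal is correct and matches the paper's approach exactly: the paper does not give a detailed proof but simply states that these inequalities follow from the Cauchy--Schwarz inequality together with the uniform decay estimate \eqref{eq:polEst}, referring to \cite[Lemma~4]{MeRa03} for the details. Your identification of the one nontrivial point---that in \eqref{eq:prod2} the term $P_b\,\partial_b\phi_b$ is supported near $|y|\sim 1/b$, requiring a Hardy/Poincar\'e-type control of $\|\xi\|_{L^2}$ on that region by $\|\nabla\xi\|_{L^2}$, with the exponential smallness of $P_b$ there absorbing the resulting $1/b$ factors---is precisely the reason the gradient term appears on the right-hand side of \eqref{eq:prod2} but not \eqref{eq:prod1}.
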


These inequalities have been proven in \cite[Lemma $4$]{MeRa03}. They follow from the Cauchy-Schwarz inequality and the uniform estimate \eqref{eq:polEst}. 
In the next two Lemmas \ref{lem:neg1} and \ref{lem:neg2} we prove the smallness of the scalar products $(\xi(t), Q_{b(t)})$ and $(\xi, \ii\nabla Q_{b(t)})$, so to conclude our control of the negative terms appearing in \eqref{eq:almost_coerc} up to additional terms controlled by the smallness of $s_c$. This will be achieved by exploiting the balance laws satisfied by the total momentum and energy \eqref{eq:momGrow} and \eqref{eq:en_grow}, respectively, together with the bounds \eqref{eq:conEP}.

\begin{lem}\label{lem:neg1}
There exists $\nu_1 >0$ such that for any $\nu < \nu_1$ and $t \in [0,T_1)$, we have
\begin{equation} \label{eq:enest1}
|(\xi(\cdot, t), Q_{b(t)})| \lesssim \int |\xi|^2e^{-|y|} + |\nabla \xi|^2 \, dy + \Gamma_{b(t)}^{1 -2\nu}.
\end{equation}
\end{lem}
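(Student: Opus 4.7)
\textbf{Proof plan for Lemma \ref{lem:neg1}.} The strategy is to extract $(\xi, Q_b)$ from a first-order Taylor expansion of the energy $E[Q_b + \xi]$ around $Q_b$, then use the bootstrap smallness of $E[\psi]$ combined with the smallness of $E[Q_b]$ from \eqref{eq:Qbprop}. The starting point is the scaling identity
\begin{equation*}
\lambda^{2-2s_c} E[\psi(t)] = E[Q_b + \xi],
\end{equation*}
coming from the decomposition \eqref{eq:deco1}. Expanding the right-hand side around $Q_b$ gives
\begin{equation*}
E[Q_b+\xi] - E[Q_b] = -\operatorname{Re}\int (\Delta Q_b + |Q_b|^{2\sigma_1}Q_b)\bar\xi\,dy + \tfrac12\int|\nabla\xi|^2\,dy + \mathcal{N}(\xi),
\end{equation*}
where the quadratic and higher terms $\mathcal{N}(\xi)$ come from the power nonlinearity and are controlled by $\int|Q_b|^{2\sigma_1}|\xi|^2\,dy + \|\xi\|_{H^1}^{2\sigma_1+2}$; the former is bounded by $\int|\xi|^2 e^{-|y|}\,dy$ using the pointwise decay \eqref{eq:polEst}, and the latter, by Sobolev embedding and the bootstrap \eqref{eq:conxi}, is easily smaller than $\Gamma_b^{1-2\nu}$.

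The key algebraic step is to use equation \eqref{eq:Qb} for $Q_b$ to rewrite the linear term: substituting
\begin{equation*}
\Delta Q_b + |Q_b|^{2\sigma_1}Q_b = Q_b - \ii s_c \beta\, \partial_b Q_b - \ii b\,\Lambda Q_b - \Psi_b
\end{equation*}
produces
\begin{equation*}
-\operatorname{Re}\int(\Delta Q_b + |Q_b|^{2\sigma_1}Q_b)\bar\xi\,dy = -(\xi,Q_b) + s_c\beta(\xi,\ii\partial_b Q_b) + b(\xi,\ii\Lambda Q_b) + (\xi,\Psi_b).
\end{equation*}
The third term vanishes identically thanks to the orthogonality condition $(\xi, \ii\Lambda Q_b)=0$ from \eqref{eq:ortho4}. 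The remaining pieces are controlled with the scalar product estimates already recorded: \eqref{eq:prod2} gives $|(\xi,\ii\partial_b Q_b)| \lesssim (\int |\nabla\xi|^2 + |\xi|^2 e^{-|y|}\,dy)^{1/2}$, and \eqref{eq:prod3} gives $|(\xi,\Psi_b)| \lesssim \Gamma_b^{1-\nu}$; the factor $\beta$ is bounded uniformly by \eqref{eq:Tb}.

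Collecting everything yields
\begin{equation*}
(\xi,Q_b) = -\bigl(E[Q_b+\xi] - E[Q_b]\bigr) + \tfrac12\int|\nabla\xi|^2\,dy + \mathcal{N}(\xi) + s_c\beta(\xi,\ii\partial_b Q_b) + (\xi,\Psi_b).
\end{equation*}
Now $|E[Q_b+\xi]| = \lambda^{2-2s_c}|E[\psi]| \leq \Gamma_b^2$ by \eqref{eq:conEP}, while $|E[Q_b]| \lesssim \Gamma_b^{1-c\rho} + s_c \lesssim \Gamma_b^{1-\nu^2}$ using \eqref{eq:Qbprop}, the calibration \eqref{eq:crhoNU}, and the bootstrap \eqref{eq:conb}. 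The Young-type inequality $s_c\,X^{1/2} \leq s_c^2 + X$ absorbs the $\partial_b Q_b$ term into $\int|\nabla\xi|^2 + |\xi|^2 e^{-|y|}\,dy$ plus an error $s_c^2 \leq \Gamma_b^{2(1-\nu^2)} \leq \Gamma_b^{1-2\nu}$ for $\nu$ sufficiently small. All powers of $\Gamma_b$ appearing are at least $1-2\nu$, which gives the claimed bound.

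The main technical point to watch is that every auxiliary term must indeed be no larger than $\Gamma_b^{1-2\nu}$ modulo a multiple of the targeted $H^1$-type remainder on the right-hand side; this is essentially an accounting exercise comparing the exponents $1-c\rho$, $1-\nu$, $1-\nu^2$, $1-\nu^{50}$ against $1-2\nu$, which all work out provided $\nu$ is chosen small enough, determining the threshold $\nu_1$ in the statement.
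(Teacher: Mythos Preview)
Your overall strategy---expand the energy around $Q_b$, use the profile equation \eqref{eq:Qb} to rewrite the linear term, kill $(\xi,\ii\Lambda Q_b)$ by orthogonality, and bound the remaining pieces---is exactly the paper's approach. The algebra and the treatment of $E[Q_b]$, $(\xi,\Psi_b)$, and $s_c\beta(\xi,\ii\partial_b Q_b)$ are all fine.

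There is, however, a real gap in your handling of the purely nonlinear piece $\|\xi\|_{L^{2\sigma_1+2}}^{2\sigma_1+2}$. You bound it by $\|\xi\|_{H^1}^{2\sigma_1+2}$ and invoke the bootstrap \eqref{eq:conxi}, but \eqref{eq:conxi} controls only $\|\nabla\xi\|_{L^2}^2$ and the \emph{weighted} quantity $\int|\xi|^2 e^{-|y|}\,dy$, not the full $L^2$ norm of $\xi$. In the self-similar regime the unweighted $\|\xi\|_{L^2}$ is \emph{not} small (the radiation carries away mass), so the Sobolev embedding $H^1\hookrightarrow L^{2\sigma_1+2}$ is useless here. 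The paper closes this by the homogeneous critical embedding $\dot H^{s(\sigma_1)}\hookrightarrow L^{2\sigma_1+2}$ with $s(\sigma_1)=\tfrac{d\sigma_1}{2\sigma_1+2}$, then interpolating $\dot H^{s(\sigma_1)}$ between $\dot H^s$ and $\dot H^1$ using the separate bootstrap bound \eqref{eq:conHs}; this is precisely the reason for the auxiliary exponent $s$ in \eqref{eq:s_cond_in}. You need to invoke \eqref{eq:conHs} at this step or the estimate does not close.
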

Let us remark that, by combining \eqref{eq:enest1} with the bound \eqref{eq:conxi}, we would obtain the following estimate
\begin{equation}\label{eq:enest}
 |(\xi(\cdot, t), Q_{b(t)})| \lesssim \Gamma_{b(t)}^{1 -20 \nu}.
\end{equation}
Although this rougher bound is sufficient to show the coercivity of the linearized operator, the estimate \eqref{eq:enest1} will be crucial to prove Proposition \ref{thm:boot}.

\begin{proof}
We plug the decomposition of the solution \eqref{eq:deco1} into the energy equation to obtain 
\begin{equation}\label{eq:enLong}
\begin{aligned}
 2\lambda^{2 - 2s_c} E[\psi] &= 2E[Q_b] + \| \nabla \xi\|_{L^2}^2 - 2\left(\Delta Q_b,\xi \right) \\ 
 &- \frac{1}{\sigma_1 + 1} \int |Q_b + \xi|^{2\sigma_1 +2} - |Q_b|^{2\sigma_1 + 2} \, dy.
\end{aligned}
\end{equation}
By using equation \eqref{eq:Qb} satisfied by $Q_b$ in the expression above, we get 
\begin{equation*}
		\begin{split}
				2\lambda^{2 - 2s_c} E[\psi] &= 2E[Q_b] + \| \nabla \xi\|_{L^2}^2 \\& + 2\left(\ii s_c \beta \partial_b Q_b - Q_b + \ii b \Lambda Q_b + \Psi_b,\xi \right) - \int R^{(2)}(\xi) \, dy,
		\end{split}
	\end{equation*}
 where $R^{(2)}(\xi)$ is defined by
\begin{equation}\label{eq:R2}
R^{(2)}(\xi) = \frac{1}{\sigma_1 + 1} \left(|Q_b + \xi|^{2\sigma_1 + 2} - |Q_b|^{2\sigma_1 + 2} - (2\sigma_1 + 2)|Q_b|^{2\sigma_1} Re(Q_b \bar{\xi})\right).
\end{equation}
Equivalently, we write the identity above as
 \begin{equation}\label{eq:entok}
		\begin{split}
				2 (Q_b,\xi) &= - 2\lambda^{2 - 2s_c} E[\psi] + 2E[Q_b] + \| \nabla \xi\|_{L^2}^2 \\
 & + 2\left(\ii s_c \beta \partial_b Q_b + \ii b \Lambda Q_b + \Psi_b,\xi \right) - \int R^{(2)}(\xi) dy.
		\end{split}
	\end{equation} 
Now we bound the terms on the right-hand side of \eqref{eq:entok}. For the first term, we use the control on the energy \eqref{eq:conEP}
\begin{equation*}
 \left|- 2\lambda^{2 - 2s_c} E[\psi] \right| \leq 2 \Gamma_b^2.
\end{equation*}
For the second term, we use the properties of $Q_b$ listed in \eqref{eq:Qbprop}, inequality \eqref{eq:crhoNU} and the control \eqref{eq:conb} on $s_c$ to obtain that
\begin{equation*}
 \big| E[Q_b] \big| \leq \Gamma_b^{1 - c\rho} + s_c \leq \Gamma_b^{1 - \nu^{50}} + \Gamma_b^{1 - \nu^{2}}.
\end{equation*}
For the fourth term, we use again the control \eqref{eq:conb} on $s_c$, estimates \eqref{eq:prod2}, \eqref{eq:prod3} and \eqref{eq:conxi} to get
\begin{equation*}
\begin{aligned}
 \left| s_c\left(\ii \beta \partial_b Q_b, \xi\right) + \left(\Psi_b, \xi \right) \right| &\lesssim \left(s_c \left( \int |\nabla \xi|^2 + |\xi|^2e^{-|y|} dy\right)^\frac{1}{2} + 
\Gamma_b^{1 - \nu}\right) \\
 &\lesssim \left(\Gamma_b^{1 - \nu^2}\Gamma_{b}^{\frac{1}{2} - 10\nu} + \Gamma_b^{1 - \nu} \right).
\end{aligned}
\end{equation*}

Moreover, we also observe that $\left( \ii b \Lambda Q_b ,\xi \right) = 0$ from \eqref{eq:ortho4}. Finally, we use estimates \eqref{eq:prod1} and \eqref{eq:conxi} to control the remainder term $R^{(2)}(\xi)$ defined in \eqref{eq:R2} as follows,
\begin{equation*}
 \left|\int R^{(2)}(\xi) dy \right| \lesssim \int | \nabla \xi|^2 + |\xi|^2e^{-|y|} dy + \| \xi\|_{L^{2\sigma_1 + 2}}^{2\sigma_1 + 2}.
\end{equation*}
In order to bound the $L^{2\sigma_1 + 2}$-norm of $\xi$, we observe that since
\begin{equation*}
 s_c < s < \frac{d}{2} - \frac{d}{2\sigma_1 + 2} = \frac{d\sigma_1}{2\sigma_1 + 2} = s(\sigma_1) 
\end{equation*}
 then by Sobolev embedding and subsequent interpolation between $H^1(\R^d)$ and $H^s(\R^d)$, we have
\begin{equation}\label{eq:nnlin}
\|\xi\|_{L^{2(\sigma_1+1)}}^{2(\sigma_1+1)}\lesssim
\||\nabla|^{s(\sigma_1)}\xi\|_{L^2}^{2(\sigma_1+1)}
\leq \| \xi\|_{\dot H^1}^{2\theta(\sigma_1 + 1)} \| \xi\|_{\dot H^s}^{2(1 - \theta)(\sigma_1 + 1)},
\end{equation}
for some $\theta=\theta(s)\in(0, 1)$, where $s$ is determined in Definition \ref{def:deco}, see \eqref{eq:s_cond_in}.
Now by using the controls \eqref{eq:conxi} and \eqref{eq:conHs} we obtain 
\begin{equation*}
\begin{aligned}
 \|\nabla \xi\|_{\dot H^1}^{2\theta(\sigma_1 + 1)} \|\nabla \xi\|_{\dot H^s}^{2(1 - \theta)(\sigma_1 + 1)} &\leq \Gamma_b^{(1 - 20\nu)\theta(\sigma_1 + 1)} \Gamma_b^{(1 - 50\nu)(1 -\theta)(\sigma_1 + 1)} \\
 &\leq \Gamma_b^{(1 - 50\nu)(\sigma_1 + 1) + 30 \nu \theta (1 + \sigma_1)}. 
\end{aligned}
\end{equation*}
By collecting everything together, we obtain that
\begin{equation*}
 \begin{aligned}
 \left|(\xi, Q_b)\right| &\lesssim \Gamma_b^2 + \Gamma_b^{1 - \nu^{50}} + \Gamma_b^{1 - \nu^{2}} + \int | \nabla \xi|^2 + |\xi|^2e^{-|y|} \, dy \\
 &+ \Gamma_b^{1 - \nu^2} \Gamma_{b}^{\frac{1}{2} - 10\nu} + \Gamma_b^{1 - \nu} + \Gamma_b^{(1 - 50\nu)(\sigma_1 + 1) + 30 \nu \theta (1 + \sigma_1)}.
 \end{aligned}
\end{equation*}
By choosing $\nu$ small enough, we have thus obtained estimate \eqref{eq:enest1}.
\end{proof}

We also exploit the equation of the momentum \eqref{eq:momGrow} to derive a bound on $(\xi(\cdot, t), \ii \nabla Q_{b(t)})$, as in the following lemma.

\begin{lem}\label{lem:neg2}
There exists $\nu^{(1)} >0$ such that for any $0< \nu < \nu^{(1)}$ and any $t \in [0,T_1)$, we have
 \begin{equation}\label{eq:momest}
|(\xi(\cdot, t), \ii \nabla Q_{b(t)})| \leq \Gamma_{b(t)}^{1 - 50 \nu}.
\end{equation}
\end{lem}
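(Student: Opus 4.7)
The strategy follows the template of Lemma \ref{lem:neg1}, with the momentum identity \eqref{eq:momGrow} and the bootstrap bound \eqref{eq:conEP} on $\lambda^{1-2s_c}|P[\psi]|$ replacing the energy balance used there.

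The first step is to substitute the self-similar decomposition \eqref{eq:deco1} into $P[\psi] = (\nabla \psi, \ii \psi)$. After performing the change of variable $y = (x - x(t))/\lambda(t)$ and using $d - 2/\sigma_1 = 2 s_c$, one arrives at
\begin{equation*}
\lambda^{1-2 s_c}\, P[\psi] \;=\; \bigl( \nabla (Q_b + \xi),\, \ii (Q_b + \xi) \bigr).
\end{equation*}
Expanding, the diagonal term equals $P[Q_b]$ and vanishes by \eqref{eq:Qbprop}, while a componentwise integration by parts (together with $\overline{\ii z} = -\ii \bar z$ and $\mathrm{Im}(\bar w) = -\mathrm{Im}(w)$) shows that the two cross terms coincide and each equals $-(\xi, \ii \nabla Q_b)$. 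The identity therefore reduces to
\begin{equation*}
2(\xi, \ii \nabla Q_b) \;=\; -\lambda^{1-2 s_c}\, P[\psi] + (\nabla \xi, \ii \xi).
\end{equation*}
The first term on the right is already under control, since \eqref{eq:conEP} gives $\lambda^{1-2 s_c} |P[\psi]| \leq \Gamma_b^2$, which is harmless compared to the target $\Gamma_b^{1 - 50 \nu}$.

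Everything therefore hinges on the quadratic remainder $(\nabla \xi, \ii \xi)$, and this is the main obstacle: in contrast with the situation of Lemma \ref{lem:neg1}, no weight localizes this expression, and the plain $L^2$-norm of $\xi$ is not controlled by Proposition \ref{prp:deco}, so the naive Cauchy--Schwarz bound $\|\nabla \xi\|_{L^2} \|\xi\|_{L^2}$ is unavailable. The plan is to bypass this by passing to Fourier variables: Plancherel's identity gives
\begin{equation*}
(\partial_{y_j} \xi, \ii \xi) \;=\; (2\pi)^{-d} \int \omega_j\, |\hat\xi(\omega)|^2\, d\omega,
\end{equation*}
and then Cauchy--Schwarz in frequency yields $|(\nabla \xi, \ii \xi)| \lesssim \|\xi\|_{\dot H^s}\, \|\xi\|_{\dot H^{1-s}}$. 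Since the parameter $s = s(\sigma_1, \sigma_2)$ selected in Definition \ref{def:deco} satisfies $s < 1/2$ by \eqref{eq:s_cond_in}, the Sobolev interpolation
\begin{equation*}
\|\xi\|_{\dot H^{1-s}} \;\leq\; \|\xi\|_{\dot H^s}^{s/(1-s)}\, \|\xi\|_{\dot H^1}^{(1-2s)/(1-s)}
\end{equation*}
is legitimate, and inserting the bootstrap controls \eqref{eq:conHs} and \eqref{eq:conxi} reduces the matter to an elementary exponent computation: the resulting power of $\Gamma_b$ equals $1 - \nu (35 - 20 s)/(1-s)$, which is at least $1 - 50 \nu$ throughout the admissible range $s \leq 1/2$.

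Collecting the two estimates and using $\Gamma_b < 1$ together with the smallness of $\Gamma_b$ granted by \eqref{eq:conb}, both contributions on the right-hand side of the key identity are bounded by a fixed multiple of $\Gamma_b^{1 - 50 \nu}$; shrinking $\nu$ below a threshold $\nu^{(1)}$ so as to absorb the Plancherel and interpolation constants (which is possible because the exponent above is \emph{strictly} greater than $1-50\nu$ for $s<1/2$) yields the claimed inequality \eqref{eq:momest}.
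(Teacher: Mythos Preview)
Your proof is correct and follows essentially the same approach as the paper. The paper writes the quadratic term as $P[\xi]$, bounds it by $\|\xi\|_{\dot H^{1/2}}^2$, and then interpolates $\dot H^{1/2}$ between $\dot H^s$ and $\dot H^1$; your Fourier-side Cauchy--Schwarz split $|\omega|\,|\hat\xi|^2 = |\omega|^s|\hat\xi|\cdot|\omega|^{1-s}|\hat\xi|$ followed by interpolation of $\dot H^{1-s}$ is algebraically equivalent and produces the identical exponent $1 - \nu(35-20s)/(1-s)$.
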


\begin{proof}
We plug decomposition \eqref{eq:deco1} into the momentum \eqref{eq:momGrow} to obtain
\begin{equation*}
	 2 (\xi, \ii \nabla Q_b)= - \lambda^{1 -2s_c} P[\psi] + P[Q_b] + P[\xi].
\end{equation*}
The first term on the right-hand side is bounded by \eqref{eq:conEP},
\begin{equation*}
 \left|- \lambda^{1 -2s_c} P[\psi] \right| \leq\Gamma_b^2.
\end{equation*}
Next, we observe that by the properties of $Q_b$, we have $P[Q_b] = 0$, see \eqref{eq:Qbprop}. For the last term, since $s < \frac{1}{2}$, we use \eqref{eq:s_cond_in} to estimate
\begin{equation*}
\begin{aligned}
 \left| P[\xi] \right| \lesssim \| \xi\|_{\dot{H}^{\frac{1}{2}}}^2 \lesssim \| \xi\|_{\dot H^1}^{2\theta} \| \xi\|_{\dot H^s}^{2(1 - \theta)}
\end{aligned}
\end{equation*}
where 
\begin{equation*}
 \theta = \frac{2- 4s}{2-s}>0.
\end{equation*}
Now we use estimates \eqref{eq:conxi} and \eqref{eq:conHs} to obtain
\begin{equation*}
 \| \xi\|_{\dot H^1}^{2\theta} \| \xi\|_{\dot H^s}^{2(1 - \theta)} \leq \Gamma_b^{(1 - 20\nu)\theta} \Gamma_b^{(1 - 50\nu)(1 -\theta)} = \Gamma_b^{(1 - 50\nu) + 30 \nu \theta}.
\end{equation*}
By combining all previous estimates, we get
\begin{equation*}
 \left| (\xi, \ii \nabla Q_b)\right| \lesssim \Gamma_b^2 + \Gamma_b^{(1 - 50\nu) + 30 \nu \theta}. 
\end{equation*}
Inequality \eqref{eq:momest} thus follows by choosing $\nu$ sufficiently small.
\end{proof}

Our next step is to obtain suitable estimates on the modulational parameters defined in the decomposition \eqref{eq:deco1}. This is accomplished by exploiting the equation \eqref{eq:xi1} satisfied by the remainder $\xi$, and the orthogonality conditions listed in \eqref{eq:ortho4}. 

\begin{lem}\label{lemma:mod_par}
For any $t \in [0,T_1)$, we have
\begin{equation}\label{eq:bLest}
\left| \frac{\dot{ \lambda}(\tau)}{\lambda(\tau)} + b(\tau) \right| + |\dot{b}(\tau)| \lesssim \Gamma_{b(\tau)}^{1 - 20 \nu},
\end{equation}
and 
\begin{equation} \label{eq:gxest}
	\begin{split}
		&\left| (\dot{ \gamma}(\tau) - 1) - \frac{1}{\| \Lambda Q_{b(\tau)}\|_{L^2}^2} (\xi(\tau), \mathcal{L} \Lambda( \Lambda Q_{b(\tau)})\right| + \left| \frac{\dot{ x}(\tau)}{\lambda(\tau)} \right| \\ &\lesssim \delta_2 \left( \int |\nabla \xi(\tau)|^2 + |\xi(\tau)|^2 e^{-|y|} dy \right)^\frac{1}{2}   + \Gamma_{b(\tau)}^{1 -20\nu},
	\end{split}
\end{equation}
where $\delta_2=\delta_2(s_c)>0$. Moreover, $\delta_2(s_c)\to0$, as $s_c\to0$.
\end{lem}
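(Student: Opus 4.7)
The plan is to derive a $4\times 4$ approximately diagonal linear system for the four modulation parameters
\[
\dot b-\beta s_c,\qquad \frac{\dot\lambda}{\lambda}+b,\qquad \dot\gamma-1,\qquad \frac{\dot x}{\lambda},
\]
by differentiating in $\tau$ each of the four orthogonality conditions \eqref{eq:ortho4} and substituting the equation \eqref{eq:xi1} for $\partial_\tau\xi$. Concretely, for each test direction $O_j\in\{|y|^2 Q_b,\,y Q_b,\,\ii\Lambda(\Lambda Q_b),\,\ii\Lambda Q_b\}$, the identity $(\xi,O_j)\equiv 0$ yields, since $O_j$ depends on $\tau$ only through $b(\tau)$,
\[
(\partial_\tau\xi,O_j)+\dot b\,(\xi,\partial_b O_j)=0.
\]
Reading off $\ii\partial_\tau\xi$ from \eqref{eq:xi1} and taking the appropriate real or imaginary part of the resulting scalar product produces a linear equation in the four modulation parameters, with error contributions coming from $\mathcal{L}\xi$, from $\Psi_b$, from the damping forcing $\ii\eta\lambda^{2-2\sigma_2/\sigma_1}|Q_b+\xi|^{2\sigma_2}(Q_b+\xi)$, from the nonlinear remainder $R(\xi)$, and from the auxiliary correction $\dot b\,(\xi,\partial_b O_j)$.

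Next, I would verify that the leading-order $4\times 4$ coefficient matrix is essentially diagonal for $s_c$ and $b$ small. By the parity properties encoded in the decay estimate \eqref{eq:polEst} and by the identities \eqref{eq:Qbprop} (in particular $P[Q_b]=0$ and $(\Lambda Q_b,\ii Q_b)\sim -\tfrac{b}{2}\|xQ\|_{L^2}^2$), most off-diagonal entries either vanish by symmetry or are of size $O(s_c)+O(b^2)$, while the diagonal entries reduce to nonzero quantities such as $\|\Lambda Q_b\|_{L^2}^2$ and $\|xQ\|_{L^2}^2$. The system is therefore invertible with bounded inverse for $s_c<s_c^*$.

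The error terms are then estimated one at a time. The $\Psi_b$ contributions are bounded by $\Gamma_b^{1-\nu}$ via \eqref{eq:prod3}. The damping forcing is the genuinely new feature compared with \cite{MeRaSz10}: combining the smallness assumption \eqref{eq:initEta} $\eta\leq s_c^3$ with $\lambda\leq\Gamma_b^{10}$ from \eqref{eq:conL}, one sees that $\eta\lambda^{2-2\sigma_2/\sigma_1}$ is much smaller than the target $\Gamma_b^{1-20\nu}$, and a Cauchy--Schwarz estimate using \eqref{eq:polEst} and \eqref{eq:conxi} delivers the required bound on its scalar product with $O_j$. The nonlinear remainder $R(\xi)$ is treated through the Sobolev-interpolation inequality \eqref{eq:nnlin} together with \eqref{eq:conxi} and \eqref{eq:conHs}, yielding $\Gamma_b^{1-20\nu}$ or better. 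The correction $\dot b\,(\xi,\partial_b O_j)$ is controlled via \eqref{eq:prod2} and \eqref{eq:conxi} by $|\dot b|\,\Gamma_b^{1/2-10\nu}$, and can be absorbed on the left-hand side after inverting the matrix. The only term from $\mathcal{L}\xi$ that does not carry an extra factor of $s_c$ is precisely $(\xi,\mathcal{L}\Lambda(\Lambda Q_b))/\|\Lambda Q_b\|_{L^2}^2$, which is why it appears explicitly in \eqref{eq:gxest}; all other $\mathcal{L}\xi$ contributions inherit, through the identity $\Lambda = D - s_c$ in \eqref{eq:Lambda-D}, a prefactor $\delta_2=\delta_2(s_c)\to 0$, accounting for the $\delta_2\|\xi\|_{H^1}$ term on the right-hand side of \eqref{eq:gxest}. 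Inverting the system and taking real or imaginary parts as appropriate then yields both \eqref{eq:bLest} and \eqref{eq:gxest}.

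The main technical obstacle, absent from the undamped analysis of \cite{MeRaSz10}, is precisely the control of the damping forcing in the modulation equations; this is what dictates the smallness assumption \eqref{eq:initEta} on $\eta$. A secondary subtlety is checking that the $s_c$-small prefactor in front of $\ii\beta\partial_b Q_b$ in \eqref{eq:xi1}, which couples $\dot b$ to itself across the four equations, does not spoil the invertibility of the leading coefficient matrix; this is where the global smallness hypothesis $s_c<s_c^*$ enters.
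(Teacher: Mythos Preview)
Your proposal is correct and follows essentially the same route as the paper: both derive the modulation equations by pairing \eqref{eq:xi1} against the four directions $|y|^2Q_b$, $yQ_b$, $\ii\Lambda(\Lambda Q_b)$, $\ii\Lambda Q_b$ (equivalently, differentiating the orthogonality conditions \eqref{eq:ortho4}), isolate the damping contribution, and bound it via $\eta\lambda^{2-2\sigma_2/\sigma_1}\lesssim\Gamma_b^2$ using \eqref{eq:initEta}, \eqref{eq:conb}, \eqref{eq:conL}. Your packaging of the computation as an approximately diagonal $4\times4$ system is a slightly more systematic rendering of what the paper carries out parameter by parameter (deferring the undamped calculations to Appendix~A and \cite{MeRaSz10}), but the estimates and the handling of the new damping term are identical.
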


\begin{proof}
The lemma is proved by taking the scalar product of equation \eqref{eq:xi1} with suitable terms that allow us to exploit the orthogonality conditions \eqref{eq:ortho4}. 
The same approach was already used in \cite{MeRaSz10}, see Lemma 3.1 and Proposition 3.3 therein, see also \cite[Appendix A]{Ra05}, to study the undamped dynamics. For this reason we write equation \eqref{eq:xi1} as
\begin{equation}\label{eq:tokxi}
	U(\xi) + \ii \eta \lambda^{2 - 2\frac{\sigma_2}{\sigma_1}}|Q_b + \xi|^{2\sigma_2}(Q_b+\xi) = 0,
\end{equation}
so that, in what follows, we exploit the analysis already developed in \cite{MeRaSz10}.
\newline
Let us first consider the bound on $|\dot b|$. We take the scalar product of \eqref{eq:tokxi} with $\Lambda Q_b$. Following the computations in Appendix \hyperref[sec:appenA]{A}, we use estimates
\eqref{eq:prod1}, \eqref{eq:enest}, \eqref{eq:momest}, \eqref{eq:conHs} and \eqref{eq:conxi} to obtain
\begin{equation*}
\begin{aligned}
 | \dot{ b}| &\lesssim \int |\nabla \xi|^2 + |\xi|^2e^{-|y|} \, dy + \Gamma_b^{1 - 11 \nu} + \eta \lambda^{2 - 2\frac{\sigma_2}{\sigma_1}} \left|\left( \ii |Q_b + \xi|^{2\sigma_2}(Q_b+\xi) , \Lambda Q_b\right) \right|.
\end{aligned}
\end{equation*}
Similarly, we obtain the estimate for other parameters 
\begin{equation*}
 \begin{aligned}
 \left| \frac{\dot{ \lambda}}{\lambda} + b \right| &\lesssim \int |\nabla \xi|^2 + |\xi|^2e^{-|y|} dy + \Gamma_b^{1 - 11 \nu} + \eta \lambda^{2 - 2\frac{\sigma_2}{\sigma_1}} \left|\left( |Q_b + \xi|^{2\sigma_2}(Q_b+\xi) ,|y|^2 Q_b\right) \right|.
 \end{aligned}
\end{equation*}
and
\begin{equation*} 
	\begin{split}
		\left| ( \dot{ \gamma }- 1) - \frac{1}{\| \Lambda Q_b\|_{L^2}^2} (\xi, \mathcal{L} \Lambda( \Lambda Q_b)\right| + \left| \frac{\dot{ x}}{\lambda} \right| & \lesssim \delta_2 \left( \int |\nabla \xi|^2 + |\xi|^2 e^{-|y|} \, dy\right)^\frac{1}{2} \\ 
 &+ \int |\nabla \xi|^2 \, dy + \Gamma_b^{1 -11\nu}\\
 & + \eta \lambda^{2 - 2\frac{\sigma_2}{\sigma_1}} \bigg|\left( |Q_b + \xi|^{2\sigma_2}(Q_b+\xi) ,y Q_b\right) \\ 
 &+ \left( \ii |Q_b + \xi|^{2\sigma_2}(Q_b+\xi) ,\Lambda( \Lambda Q_b)\right) \bigg|.
	\end{split}
\end{equation*}
Let us now control the contributions coming from the damping term. We write 
\begin{equation*}
	|Q_b + \xi|^{2\sigma_2}(Q_b+\xi) = |Q_b|^{2\sigma_2}Q_b + R^{(1)}(\xi).
\end{equation*}
We use \eqref{eq:prod4} to obtain that
\begin{equation}\label{eq:weknow}
	\left|\left( \ii |Q_b|^{2\sigma_2}Q_b , \Lambda Q_b + \Lambda( \Lambda Q_b) + \ii yQ_b + \ii |y|^2 Q_b \right) \right| \lesssim 1. 
\end{equation}
On the other hand, by using \eqref{eq:prod1} and \eqref{eq:conxi}, we also have that 
\begin{equation*}
 \left|\left(R^{(1)}(\xi), \Lambda Q_b + \Lambda( \Lambda Q_b) + \ii yQ_b + \ii |y|^2 Q_b\right) \right| \lesssim \left(\int |\nabla \xi|^2 \, dy +\int |\xi|^2e^{-|y|} dy\right)^{\frac{1}2}. 
\end{equation*}
 Thus it follows that
 \begin{equation}\label{eq:toketa}
\begin{aligned}
 &\eta\lambda^{2 - 2\frac{\sigma_2}{\sigma_1}}\big| \left( \ii |Q_b + \xi|^{2\sigma_2}(Q_b+\xi) , \Lambda Q_b + \Lambda( \Lambda Q_b) + \ii yQ_b + \ii |y|^2 Q_b \right)\big| \\ 
 &\lesssim \eta \lambda^{2 - 2\frac{\sigma_2}{\sigma_1}} \left( 1 + \int |\nabla \xi|^2 +|\xi|^2e^{-|y|} \, dy \right)^{\frac{1}2} \lesssim \Gamma_b^{3 - 3\nu^2} \Gamma_b^{20 - 20 \frac{\sigma_2}{\sigma_1} } \leq \Gamma_b^2,
\end{aligned}
\end{equation}
where we used the hypothesis on $\eta$ \eqref{eq:initEta}, \eqref{eq:conb} and \eqref{eq:conL}. 
\end{proof}

One can see that the estimates \eqref{eq:gxest} and \eqref{eq:bLest} are the same as those in \cite[Lemma $3.1$]{MeRaSz10} for the undamped case $\eta=0$. This is because our choice of $\eta$ and $\sigma_2 \leq \sigma_1$ imply that the damping term is of lower order with respect to other terms in equation \eqref{eq:xi1}. in particular, the scalar products with the forcing term defined in \eqref{eq:forcing} are well-controlled by the smallness of $\eta \lambda^{2 -2\frac{\sigma_2}{\sigma_1}}$.

\subsection{Local Virial Law}

We will now derive suitable inequalities on $\dot b$ to prove that $b$ is trapped around the value $b^*$ defined in \eqref{eq:s_c}. 
We first obtain a lower bound, see \eqref{eq:locvi} below. This estimate is connected with the local virial law for the remainder $\xi$. For details, see \cite[Section $3$]{MeRa02} and \cite[Section $4$]{MeRa03}.

\begin{lem}\label{lem:vir1}
There exists $s_c^{(2)} > 0$ such that for any $s_c < s_c^{(2)}$ and for any $t \in [0,T_1)$, there exists $C>0$ such that 
\begin{equation} \label{eq:locvi}
\dot{b}(t) \geq C\left( s_c + \int |\nabla \xi(t)|^2 + |\xi(t)|^2 e^{-|y|} \, dy - \Gamma_{b(t)}^{1 - \nu^6}\right).
\end{equation}
\end{lem}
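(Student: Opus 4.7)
The proof will follow the local virial strategy of Merle--Raphaël \cite{MeRa02,MeRa03}, adapted to the supercritical dissipative setting of \cite{MeRaSz10}. The plan is to combine the modulation-derived expression for $\dot b$ coming from the equation \eqref{eq:xi1} with the Pohozaev identity \eqref{eq:poho}, then invoke the coercivity of Proposition \ref{prp:quadForm} to extract the two positive contributions on the right-hand side of \eqref{eq:locvi}.

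First, I would test equation \eqref{eq:xi1} against an appropriate direction built out of $\Lambda Q_b$, following the computation in the proof of \cite[Lemma 3.1]{MeRaSz10}, and, using the orthogonality conditions \eqref{eq:ortho4}, isolate $\dot b$. The right-hand side then splits into four pieces: a constant contribution $2E[Q_b]-(\ii s_c\beta\partial_b Q_b+\Psi_b,\Lambda Q_b)$, a real quadratic form $\mathcal Q(\xi,\xi)$ stemming from the linearized operator $\mathcal L$, cubic and higher-order remainders coming from $R(\xi)$, and the damping forcing $\eta\lambda^{2-2\sigma_2/\sigma_1}(\ii|Q_b+\xi|^{2\sigma_2}(Q_b+\xi),\Lambda Q_b)$. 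The Pohozaev identity \eqref{eq:poho} converts the constant piece into $s_c(2E[Q_b]+\|Q_b\|_{L^2}^2)$, and by \eqref{eq:Qbprop} this equals $s_c\|Q\|_{L^2}^2+O(s_c\,\Gamma_b^{1-c\rho})+O(s_c^2)$, which is the origin of the leading positive term $\sim s_c$ in \eqref{eq:locvi}.

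Next, I would apply the coercivity estimate of Proposition \ref{prp:quadForm} to the quadratic form $\mathcal Q(\xi,\xi)$ which, up to $O(s_c)+O(b)$ corrections coming from the relation $\Lambda=D-s_c$ in \eqref{eq:Lambda-D} and from the convergence $Q_b\to Q\to Q_c$ as $b,s_c\to 0$, coincides with $\mathcal H(\xi,\xi)$. This yields a lower bound $c\bigl(\|\nabla\xi\|_{L^2}^2+\int|\xi|^2 e^{-|y|}\,dy\bigr)$ minus six negative scalar-product terms. Four of them are annihilated by the orthogonality conditions \eqref{eq:ortho4}, up to errors of order $s_c$ times the positive coercive part, which are absorbable for $s_c$ small. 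The remaining two, $(\xi,Q_c)^2$ and $(\xi,\ii\nabla Q_c)^2$, are handled respectively by the sharp form \eqref{eq:enest1} of Lemma \ref{lem:neg1}, which leaves only a contribution of order $\int|\nabla\xi|^2+|\xi|^2 e^{-|y|}\,dy+\Gamma_b^{1-2\nu}$ absorbable with a small constant, and by Lemma \ref{lem:neg2}, which contributes $\Gamma_b^{2-100\nu}$.

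The damping forcing and the nonlinear remainder $R(\xi)$ are controlled exactly as in the proof of Lemma \ref{lemma:mod_par}: by the computation \eqref{eq:toketa}, the damping term is of size $\Gamma_b^2$ thanks to \eqref{eq:initEta} and \eqref{eq:conL}, hence negligible with respect to both the main $s_c$ term and the $\Gamma_b^{1-\nu^6}$ error. Collecting everything and choosing $s_c,\nu$ small enough yields \eqref{eq:locvi}. The main obstacle in this program is not the damping contribution -- already tamed by \eqref{eq:initEta} -- but rather the careful handling of the two negative directions $(\xi,Q_c)$ and $(\xi,\ii\nabla Q_c)$ that are not killed by the orthogonality conditions: their control rests precisely on the refined bounds of Lemmas \ref{lem:neg1} and \ref{lem:neg2}, obtained from the evolution laws \eqref{eq:en_grow}, \eqref{eq:momGrow} and the bootstrap assumption \eqref{eq:conEP}. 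Without the \emph{quadratic} bound \eqref{eq:enest1}, rather than the rougher \eqref{eq:enest}, the absorption step in the coercivity argument would fail.
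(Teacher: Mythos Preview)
Your proposal is correct and follows essentially the same route as the paper: test \eqref{eq:xi1} against $\Lambda Q_b$, use the Pohozaev identity \eqref{eq:poho} to extract the $s_c$ term, invoke the coercivity of Proposition~\ref{prp:quadForm}, kill four negative directions via \eqref{eq:ortho4}, control the remaining two via Lemmas~\ref{lem:neg1}--\ref{lem:neg2}, and dispose of the damping contribution exactly by \eqref{eq:toketa}.

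One point is off, however. Your final paragraph asserts that the \emph{sharp} bound \eqref{eq:enest1} is essential here and that the rougher \eqref{eq:enest} would cause the absorption step to fail. This is backwards. In the local virial argument the negative direction enters as $(\xi,Q_b)^2$, and already the rough bound \eqref{eq:enest} gives $(\xi,Q_b)^2\lesssim\Gamma_b^{2-40\nu}$, which is comfortably below the admissible error $\Gamma_b^{1-\nu^6}$; no absorption into the coercive part is needed. The paper in fact uses \eqref{eq:enest} in the Appendix~A computation, and explicitly remarks after Lemma~\ref{lem:neg1} that the rougher bound suffices for coercivity while the refined estimate \eqref{eq:enest1} is reserved for the Lyapunov-functional comparison \eqref{eq:j-kupper} in the proof of Proposition~\ref{thm:boot}. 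So your argument goes through, but the emphasis you place on \eqref{eq:enest1} for this particular lemma is misplaced.
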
 

\begin{proof}
	By taking the scalar product of \eqref{eq:xi1} with $\Lambda Q_b$ we obtain
	\begin{equation*}
\begin{aligned}
 0 &= (\ii \partial_\tau \xi, \Lambda Q_b)+ \dot{b} (\ii \partial_b Q_b, \Lambda Q_b) - \left( \ii \beta s_c \partial_b Q_b + \Psi_b, \Lambda Q_b \right) 
 +(\mathcal{L} \xi + R(\xi), \Lambda Q_b) \\
 & - \left( \ii \left( \frac{\dot{ \lambda} }{\lambda} + b \right) \Lambda Q_b + \ii\frac{\dot{ x}}{\lambda} \cdot \nabla Q_b + (\dot{ \gamma } -1) Q_b, \Lambda Q_b\right)\\
 &- \left( \ii \left( \frac{\dot{ \lambda} }{\lambda} + b \right) \Lambda \xi + (\dot{ \gamma } -1) \xi + \ii\frac{\dot{ x}}{\lambda} \cdot \nabla \xi, \Lambda Q_b\right) \\
 & + \eta \lambda^{2 - 2\frac{\sigma_2}{\sigma_1}}(\ii |Q_b + \xi|^{2\sigma_2}(Q_b + \xi), \Lambda Q_b).
\end{aligned}
\end{equation*}
Notice that only the last term in this equation depends on the damping. The computation involving the other terms will be shown in Appendix \hyperref[sec:appenA]{A} following the exposition of \cite[Proposition $3.3$]{MeRaSz10}. These computations yield the following inequality
\begin{equation*}
\begin{aligned}
 \dot{b} & \geq C \left(s_c + \int |\nabla \xi|^2 dy + \int |\xi|^2 e^{-|y|}dy - \Gamma_b^{1 - \nu^6}\right) \\
 &- \eta \lambda^{2 - 2\frac{\sigma_2}{\sigma_1}} \left| (\ii|Q_b +\xi|^{2\sigma_2}(Q_b + \xi), \Lambda Q_b)\right|,
\end{aligned}
\end{equation*}
for some $C>0$. On the other hand, the contribution of the damping term is negligible as we control it with the calculations in the previous lemma (see \eqref{eq:toketa}) 
\begin{equation*}
 \eta \lambda^{2 - 2\frac{\sigma_2}{\sigma_1}} \left| (\ii|Q_b +\xi|^{2\sigma_2}(Q_b + \xi), \Lambda Q_b)\right| \lesssim \Gamma_b^2.
\end{equation*}
\end{proof}

\subsection{Refined virial estimate}\label{sec:outgo}

In this subsection, we derive an upper bound for $\dot{b}$. We will study the mass flux escaping the self-similar soliton core region. 
The outgoing radiation $\zeta_b$ defined in Lemma \ref{lem:rad} will play a central role in this task. 
Let $\phi \in C^\infty_c(\R^d)$ be a radial cut-off defined by
\begin{equation*}
\phi(r) \in [0,1], \ \ \ \phi(r) = \begin{cases}
& 1 \mbox{ for } r \in [0,1), \\ &
0 \mbox{ for } r \geq 2,
\end{cases}
\end{equation*}
We also define
\begin{equation*}
 \phi_A(r) = \phi\left(\frac{r}{A}\right)
\end{equation*} 
where $A=A(t)$ is determined by 
\begin{equation} \label{eq:A}
 A(t) = \Gamma_{b(t)}^{-a}
\end{equation}
and $a=a(\nu)>0$ will be chosen later. We denote by $\tilde{\zeta}_b = \phi_A \zeta_b$ the localization of the outgoing radiation. We notice that the equation for the outgoing radiation \eqref{eq:rad} implies that $\tilde{\zeta}_b$ satisfies the equation
\begin{equation}\label{eq:zetaTilde}
\Delta \tilde{\zeta_b} - \tilde{\zeta_b} + \ii b \left(\frac{d}{2} + y \cdot \nabla\right) \tilde{\zeta}_b 
= \phi_A \tilde{\Psi}_b + F = \tilde{\Psi}_b + F
\end{equation}
where
\begin{equation}\label{eq:F}
F= (\Delta \phi_A) \zeta_b + 2\nabla \phi_A \cdot \nabla \zeta_b + \ii b y \cdot \nabla \phi_A \zeta_b,
\end{equation}
and $\tilde\Psi_b$ was defined in \eqref{eq:psib0}.
In particular, by recalling that $\mathrm{supp}\Psi_b\subset B(0, R_b)\subset B(0, \frac{2}{b})$ and by the definition of $A(t)$, we have that $\phi_A \tilde{\Psi}_b = \tilde{\Psi}_b $.
Moreover, from the properties of $\zeta_b$ established in Lemma \ref{lem:rad}, we infer that $\tilde\zeta_b\in H^1_{rad}$ and, by suitably choosing $a=a(\nu)>0$, we also have
\begin{equation}\label{eq:zetaH1}
 \| \tilde{\zeta}_b\|_{H^1}^2 \leq \Gamma_b^{1 - c\rho}.
\end{equation}
We define the following refined soliton core and remainder,
\begin{equation*}
\tilde{Q}_b = Q_b + \tilde{\zeta_b} \ \mbox{ and } \ \tilde{\xi} = \xi - \tilde{\zeta_b},
\end{equation*}
so that we decompose the solution as
	\begin{equation}\label{eq:deco2}
		\psi(t,x) = \lambda(t)^{-\frac{1}{\sigma_1}}\left(\tilde{Q}_{b(t)}\left(\frac{x-x(t)}{\lambda(t)} \right) + \tilde{\xi}\left(t,\frac{x-x(t)}{\lambda(t)} \right)\right) e^{\ii \gamma(t)}.
	\end{equation}
We now claim that also the new soliton core $\tilde Q_b$ is an approximating solution to \eqref{eq:qbDer}. By using \eqref{eq:zetaTilde} and \eqref{eq:Qb}, we obtain
\begin{equation*}
 \begin{aligned}
 \ii \beta s_c \partial_b \QQ + \Delta \QQ - \QQ + \ii b \Lambda \QQ + |\QQ|^{2\sigma_1} \QQ &= \ii \beta s_c \partial_b Q_b + \Delta Q_b - Q_b \\
 &+ \ii b \Lambda Q_b+ |Q_b|^{2\sigma_1} Q_b \\
 & + \Delta \ZZ - \ZZ + \ii \left(\frac{d}{2} \ZZ + y \cdot \nabla \ZZ\right) - \ii s_c b \ZZ \\
 & - \ii s_c b \ZZ + \ii s_c\beta \partial_b \ZZ \\
 &+ |Q_b + \ZZ|^{2\sigma_1} (Q_b + \ZZ) - |Q_b|^{2\sigma_1} Q_b \\
 &= - \Psi_b + \tilde{\Psi}_b + F - \ii s_c b \ZZ + \ii s_c \beta \partial_b \ZZ \\
 &+ |Q_b + \ZZ|^{2\sigma_1} (Q_b + \ZZ) - |Q_b|^{2\sigma_1} Q_b.
 \end{aligned}
\end{equation*}
We observe that by using \eqref{eq:psi+phi}, we get $- \Psi_b + \phi_A \tilde{\Psi}_b = \Phi_b$. Thus the new profile $\QQ$ satisfies 
\begin{equation}\label{eq:QQ}
 \ii \beta s_c \partial_b \QQ + \Delta \QQ - \QQ + \ii b \Lambda \QQ + |\QQ|^{2\sigma_1} \QQ = - \tilde{\Phi}_b + F
\end{equation}
where 
\begin{equation}\label{eq:phiTilde}
 \tilde{\Phi}_b = - \Phi_b - \ii s_c b \ZZ + \ii \beta \partial_b \ZZ + |Q_b + \ZZ|^{2\sigma_1} (Q_b + \ZZ) - |Q_b|^{2\sigma_1} Q_b,
\end{equation}
and $F$ is defined in \eqref{eq:F}.
By using the equation \eqref{eq:QQ} for $\tilde Q_b$ and decomposition \eqref{eq:deco2}, we derive the equation for $\tilde\xi$,
\begin{equation} \label{eq:xi2}
	\begin{split}
		& \ii \partial_\tau \tilde{\xi} + \mathcal{\tilde{L}}\tilde{\xi} + \ii ( \dot{ b }- \beta s_c) \partial_b \QQ - \ii \left( \frac{\dot{ \lambda} }{\lambda} + b \right) \Lambda (\QQ + \tilde{\xi})- \ii\frac{\dot{ x}}{\lambda} \cdot \nabla (\QQ + \tilde{\xi}) \\
 & - (\dot{ \gamma } -1) (\QQ + \tilde{\xi})
	 + \ii \eta \lambda^{2 - 2\frac{\sigma_2}{\sigma_1}} |\QQ + \tilde{\xi}|^{2\sigma_2}(\QQ + \tilde{\xi}) + \tilde{R}(\tilde{\xi}) - \tilde{\Phi}_b + F = 0,
	\end{split}
\end{equation}
where
\begin{align*}
\mathcal{\tilde{L}}\tilde{\xi} = \Delta \tilde{\xi} - \tilde{\xi} + \ii b \Lambda \tilde{\xi} + 2\sigma_1 Re(\tilde{\xi} \overline{\tilde{Q}}_b) |\QQ|^{2\sigma_1 - 2} \QQ + |\QQ|^{2\sigma_1} \tilde{\xi}, 
\end{align*}
and 
\begin{displaymath}
\tilde{R}(\tilde{\xi}) = |\QQ + \tilde{\xi}|^{2\sigma_1}(\QQ + \tilde{\xi}) - |\QQ|^{2\sigma_1} \QQ - 2\sigma_1 Re(\tilde{\xi}\overline{\tilde{Q}}_b) |\QQ|^{2\sigma_1 - 2} \QQ 
- |\QQ|^{2\sigma_1} \tilde{\xi}.
\end{displaymath}
We recall again that the scaled time and space denoted by $\tau$ and $y$ are defined in \eqref{eq:scaled_t} and \eqref{eq:scaled_s}, respectively. With some abuse of notations, in what follows we often explicit the $\tau-$dependence of functions. For instance, we denote $\tilde\xi(\tau)$, to actually mean $\tilde\xi(t^{-1}(\tau))$.

By exploiting again the equation \eqref{eq:xi2} for the remainder $\tilde\xi$, the controls \eqref{eq:gxest} and \eqref{eq:bLest} and estimates in Proposition \ref{prp:deco}, we obtain the following bounds.

\begin{lem} \label{thm:refvirial}
There exists $C>0$ such that any $t\in [0,T_1)$, there exists $C>0$ such that 
\begin{equation}\label{eq:refvir}
\begin{split}
C^2\left( \int |\nabla\tilde{\xi}(\tau)|^2 + |\tilde{ \xi}(\tau)|^2 e^{-|y|} dy + \Gamma_{b(\tau)} \right) & \leq C \left(\frac{d}{d\tau}f(\tau) + s_c\right) \\ 
 & + \int_{\{A(\tau)\leq|y|\leq2A(\tau)\}}|\xi(\tau)|^2 dy,
\end{split}
\end{equation}
where
\begin{equation}\label{eq:f1}
\begin{aligned}
 f(\tau) = - \frac{1}{2} (\ii \tilde{Q}_{b(\tau)}, y \cdot \nabla \tilde{Q}_{b(\tau)}) - (\ii \tilde{\xi}(\tau), \Lambda \tilde{Q}_{b(\tau)}) .
\end{aligned} 
\end{equation}
\end{lem}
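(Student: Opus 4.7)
The strategy is to differentiate the virial-type functional $f(\tau)$ along the evolution, substitute the equation \eqref{eq:xi2} for $\partial_\tau\tilde\xi$, and show that the resulting expression bounds from above (i) a coercive quadratic form in $\tilde\xi$ comparable to $\|\nabla\tilde\xi\|_{L^2}^2 + \int|\tilde\xi|^2 e^{-|y|}\,dy$, and (ii) a strictly positive contribution of order $\Gamma_b$ generated by the mass flux associated with the outgoing radiation $\tilde\zeta_b$. All other contributions — modulation, damping, forcing and cut-off errors — will either be absorbed into these two positive quantities or produce the boundary integral on $\{A\leq|y|\leq 2A\}$ on the right-hand side of \eqref{eq:refvir}.

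The first step is the algebraic expansion of $\frac{d}{d\tau}f(\tau)$. Differentiating the two summands in \eqref{eq:f1} and replacing $\ii\partial_\tau\tilde\xi$ via \eqref{eq:xi2} yields, besides the principal quadratic piece $(\tilde{\mathcal L}\tilde\xi, \ii\Lambda\tilde Q_b)$, a family of linear-in-$\tilde\xi$ terms multiplying the modulation defects $\dot b - \beta s_c$, $\dot\lambda/\lambda+b$, $\dot\gamma-1$, $\dot x/\lambda$, together with scalar products of $\Lambda \tilde Q_b$ against the forcing $-\tilde\Phi_b + F$ and against the damping nonlinearity. Using \eqref{eq:QQ} to integrate by parts (in the spirit of the Pohozaev identity of Proposition \ref{prp:poho}) rewrites $(\tilde{\mathcal L}\tilde\xi,\ii\Lambda\tilde Q_b)$ as a quadratic form $\tilde{\mathcal H}(\tilde\xi,\tilde\xi)$ which, by Proposition \ref{thm:profile} and Lemma \ref{lem:rad} and the smallness of $s_c$, is a small perturbation of the mass-critical form $\mathcal H$ in \eqref{eq:quadform}. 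Combining Proposition \ref{prp:quadForm} with the four orthogonality conditions \eqref{eq:ortho4} and the two almost-orthogonal bounds \eqref{eq:enest1}, \eqref{eq:momest} — applied to $\tilde\xi = \xi - \tilde\zeta_b$, with the contribution of $\tilde\zeta_b$ controlled through \eqref{eq:zetaH1} — should give
\begin{displaymath}
\tilde{\mathcal H}(\tilde\xi,\tilde\xi) \gtrsim \int |\nabla\tilde\xi|^2 + |\tilde\xi|^2 e^{-|y|}\,dy
\end{displaymath}
up to errors of order $\Gamma_b^{2-C\nu}$ absorbable into the right-hand side of \eqref{eq:refvir}.

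Next I would dispose of the error contributions one by one. The modulation terms, by Lemma \ref{lemma:mod_par} combined with Cauchy--Schwarz and \eqref{eq:conxi}, produce at most $\varepsilon\int(|\nabla\tilde\xi|^2 + |\tilde\xi|^2 e^{-|y|})\,dy + C\Gamma_b^{2-C\nu}$. The damping pairing with $\Lambda \tilde Q_b$ is estimated exactly as in \eqref{eq:toketa}: the bulk of the nonlinearity concentrates on $\tilde Q_b$, yielding an $O(1)$ inner product, while the prefactor $\eta\lambda^{2-2\sigma_2/\sigma_1}$ is bounded by $\Gamma_b^{3-3\nu^2}\Gamma_b^{20-20\sigma_2/\sigma_1} \leq \Gamma_b^2$ via \eqref{eq:initEta} and \eqref{eq:conL}. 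The regular part of $\tilde\Phi_b$ contributes $O(s_c)$ thanks to \eqref{eq:psiB} and the $C^1$-smallness of $\tilde\zeta_b$ from Lemma \ref{lem:rad}. Finally, the correction $F$ of \eqref{eq:F}, supported in $\{A\leq|y|\leq 2A\}$, pairs with $\Lambda \tilde Q_b$ giving an exponentially negligible contribution (because $Q_b$ decays as $e^{-(1+c)|y|}$), plus the genuine remainder $\int_{\{A\leq|y|\leq 2A\}}|\xi|^2\,dy$ coming from the pairing of $F$ through $\tilde\xi$.

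The principal and most delicate step is the extraction of the positive $\Gamma_b$ on the left-hand side, which is the whole point of working with the refined profile $\tilde Q_b$ rather than $Q_b$. This term originates from the nonlinear interaction $|Q_b + \tilde\zeta_b|^{2\sigma_1}(Q_b + \tilde\zeta_b) - |Q_b|^{2\sigma_1}Q_b$ inside $\tilde\Phi_b$, see \eqref{eq:phiTilde}, paired with $\Lambda \tilde Q_b$, and from the self-interaction of $\tilde\zeta_b$ inside $-\tfrac12(\ii \tilde Q_b, y\cdot\nabla \tilde Q_b)$. Using the precise asymptotic lower bound $|y|^d |\zeta_b(y)|^2 \geq \tfrac{4}{5}\Gamma_b$ from \eqref{eq:gammaB1} together with the equation \eqref{eq:rad} satisfied by $\zeta_b$, these contributions combine into a strictly positive multiple of $\Gamma_b$, with the sign dictated by the outgoing direction of propagation of the radiation. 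Collecting all contributions and choosing $s_c$ and $\nu$ small enough so that the error terms are absorbed by the coercive term and by $\Gamma_b$, inequality \eqref{eq:refvir} follows.
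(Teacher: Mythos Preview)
Your overall architecture is right: take the scalar product of \eqref{eq:xi2} with $\Lambda\tilde Q_b$, obtain a coercive quadratic form via Proposition~\ref{prp:quadForm} plus the orthogonality conditions and the almost-orthogonal bounds \eqref{eq:enest1}, \eqref{eq:momest}, control modulation errors by Lemma~\ref{lemma:mod_par}, and discard the damping term via \eqref{eq:initEta}, \eqref{eq:conL}. This coincides with the paper's route (their Appendix~B).

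The genuine gap is in what you call ``the principal and most delicate step'': your identification of the source of the strictly positive $\Gamma_b$ is incorrect. You attribute it to the nonlinear interaction $|Q_b+\tilde\zeta_b|^{2\sigma_1}(Q_b+\tilde\zeta_b)-|Q_b|^{2\sigma_1}Q_b$ inside $\tilde\Phi_b$ and to the self-interaction of $\tilde\zeta_b$ in $-\tfrac12(\ii\tilde Q_b,y\cdot\nabla\tilde Q_b)$. Neither of these works. The entire $\tilde\Phi_b$ contribution is in fact \emph{subcritical}: using \eqref{eq:psiB} and the $C^2$-smallness of $\tilde\zeta_b$ from Lemma~\ref{lem:rad} one gets $\|(1+|y|^{10})(|\tilde\Phi_b|+|\nabla\tilde\Phi_b|)\|_{L^2}^2\lesssim \Gamma_b^{1+\nu^4}+s_c$, so pairing against anything of size $O(1)$ produces $o(\Gamma_b)$. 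Similarly, the $\tilde\zeta_b$ self-interaction inside $(\ii\tilde Q_b,y\cdot\nabla\tilde Q_b)$ does not drive a positive flux term of the required order after differentiation in $\tau$.

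The positive $\Gamma_b$ actually comes from the cut-off error $F$ of \eqref{eq:F}, and precisely from the piece that you dismissed. After the $\tilde{\mathcal L}$ manipulation (replacing $\Psi_b$ by $\tilde\Phi_b-F$ in the analogue of \eqref{eq:bVeryLong}), one is left with $(\tilde\xi,\Lambda F)$. Splitting $\tilde\xi=\xi-\tilde\zeta_b$, the $\xi$ part is controlled by Young's inequality and yields the boundary integral $\int_{\{A\leq|y|\leq 2A\}}|\xi|^2\,dy$ plus $\tfrac12 C_1\Gamma_b$; the $\tilde\zeta_b$ part gives $-(\tilde\zeta_b,\Lambda F)=-(\tilde\zeta_b,DF)+s_c(\tilde\zeta_b,F)$, and it is $-(\tilde\zeta_b,DF)\geq C_1\Gamma_b$, computed using the asymptotic lower bound $|y|^d|\zeta_b|^2\geq\tfrac45\Gamma_b$ from \eqref{eq:gammaB1} on the annulus $\{A\leq|y|\leq 2A\}$, that produces the positive contribution. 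In other words, the $\Gamma_b$ is a linear radiation mass-flux through the cut-off boundary, not a nonlinear interaction. Your claim that $(F,\Lambda\tilde Q_b)$ is exponentially negligible fails for the same reason: $\tilde Q_b$ contains $\tilde\zeta_b$, which is of size $\Gamma_b^{1/2}|y|^{-d/2}$ on $\{A\leq|y|\leq 2A\}$, not exponentially small.
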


\begin{proof}
As for Lemmas \ref{lemma:mod_par} and \ref{lem:vir1}, we conveniently write equation \eqref{eq:xi2} as
\begin{equation*}
U^{(1)}(\tilde\xi)+\ii \eta \lambda^{2 - 2\frac{\sigma_2}{\sigma_1}} |\QQ + \tilde{\xi}|^{2\sigma_2}(\QQ + \tilde{\xi})=0,
\end{equation*}
so that for $U^{(1)}((\tilde\xi)$ we again exploit the analysis already presented in \cite[Lemma $3.5$]{MeRaSz10}, \cite[Lemma $6$]{MeRa05} and reported in Appendix \hyperref[sec:appenB]{B} . By taking the scalar product of equation \eqref{eq:xi2} with $\Lambda \QQ$, we obtain
	\begin{equation}\label{eq:tok2BU}
		(P(\tilde{\xi}), \Lambda \QQ) + (\ii \eta \lambda^{2 - 2\frac{\sigma_2}{\sigma_1}} |\QQ + \tilde{\xi}|^{2\sigma_2}(\QQ + \tilde{\xi}), \Lambda \QQ) = 0.
 	\end{equation} 
Exploiting the computations in Appendix \hyperref[sec:appenB]{B} yields the following inequality
 \begin{equation*}
 	\begin{split}
 		\frac{d}{d\tau}f& \geq C \left( \int |\nabla\tilde{\xi}|^2 +|\tilde{ \xi}|^2 e^{-|y|} \, dy + \Gamma_b \right) - \frac{1}{C} \left( s_c + \int_A^{2A} |\xi|^2 \, dy \right) \\& - \eta \lambda^{2 - 2\frac{\sigma_2}{\sigma_1}} \left| (\ii|\tilde{Q}_b +\tilde{\xi}|^{2\sigma_2}(\tilde{Q}_b + \tilde{\xi}), \Lambda \tilde{Q}_b) \right|.
 	\end{split}
 \end{equation*}
We will now show that in this inequality, the contribution of the damping term can be considered negligible. \newline
 From \eqref{eq:zetaH1} and \eqref{eq:prod4} it follows that
 \begin{equation}\label{eq:QbZeta}
 \begin{aligned}
 \left|\left( \ii |\tilde{Q}_b|^{2\sigma_2}\tilde{Q}_b , \Lambda \tilde{Q}_b \right) \right| & = \left|\left( \ii |Q_b + \tilde{\zeta_b} |^{2\sigma_2}(Q_b + \tilde{\zeta_b} ) , \Lambda (Q_b + \tilde{\zeta_b} ) \right) \right| \\
 &\leq \left|\left( \ii |Q_b|^{2\sigma_2}Q_b , \Lambda Q_b \right) \right| + \left|\left( \ii |Q_b|^{2\sigma_2}Q_b ,\Lambda \tilde{\zeta_b} \right) \right| \\
 & + \left|\left( \ii |Q_b + \tilde{\zeta_b} |^{2\sigma_2}(Q_b + \tilde{\zeta_b} ) - \ii |Q_b|^{2\sigma_2}Q_b , \Lambda (Q_b + \tilde{\zeta_b} ) \right) \right|.
 \end{aligned}
 \end{equation}
 We already know from \eqref{eq:weknow} that 
 \begin{equation*}
 \left|\left( \ii |Q_b|^{2\sigma_2}Q_b , \Lambda Q_b \right) \right| \lesssim 1.
 \end{equation*}
 For the other terms in \eqref{eq:QbZeta}, we use \eqref{eq:zetaH1} and \eqref{eq:crhoNU} to obtain 
\begin{equation*}
\begin{aligned}
 & \left|\left( \ii |Q_b|^{2\sigma_2}Q_b ,\tilde{\zeta_b} \right) \right| + \left|\left( \ii |Q_b + \tilde{\zeta_b} |^{2\sigma_2}(Q_b + \tilde{\zeta_b} ) - \ii |Q_b|^{2\sigma_2}Q_b , \Lambda (Q_b + \tilde{\zeta_b} ) \right) \right| \lesssim \Gamma_b^{1 - \nu}. 
\end{aligned}
\end{equation*}
Thus we obtain that 
\begin{equation*}
 \eta \lambda^{2 - 2\frac{\sigma_2}{\sigma_1}} \left| (\ii|\tilde{Q}_b +\tilde{\xi}|^{2\sigma_2}(\tilde{Q}_b + \tilde{\xi}), \Lambda \tilde{Q}_b) \right| \lesssim \eta \lambda^{2 - 2\frac{\sigma_2}{\sigma_1}} \leq \Gamma_b^2
\end{equation*}
where we used again \eqref{eq:initEta}, \eqref{eq:conb} and \eqref{eq:conL}. Inequality \eqref{eq:refvir} follows straightforwardly from \eqref{eq:refvir1} and \eqref{eq:conb}.
\end{proof}

Notice that the control \eqref{eq:refvir} is the same as that in \cite[Lemma $3.5$]{MeRaSz10}. This is again because the contribution of the damping term is negligible in the self-similar regime due to the choice of $\eta$ and the fact that $\sigma_2 \leq \sigma_1$.
\newline
We proceed by finding a suitable bound for the mass flux term 
\begin{equation*}
	\int_{\{A(\tau)\leq|y|\leq2A(\tau)\}} |\xi|^2 \, dy
\end{equation*} 
that appears on the right-hand side of \eqref{eq:refvir}. To do this, we introduce a further radial, smooth cut-off $\chi \in [0,1]$ with $\chi(r) = 0$ for $r \leq \frac{1}{2}$, and $\chi(r) = 1$ for $r \geq 3$, with $\chi' \geq 0 $ and $\chi'(r)\in \left[ \frac{1}{4}, \frac{1}{2} \right]$ for $ 1 \leq r \leq 2$. Let 
\begin{equation}\label{eq:chiA}
\chi_A(r) = \chi\left(\frac{r}{A} \right)
\end{equation} where $A=A(t)$ is defined in \eqref{eq:A}. 
In the following lemma, we will choose $s >0$ depending also on $\sigma_2$ in order to be able to control the $L^{2\sigma_2 + 2}$-norm of the remainder $\xi$, see \eqref{eq:lowerS} below. 

\begin{lem} 
	 For any $t \in [0,T_1)$, we have
 \begin{equation} \label{eq:l2_flux}
 \begin{aligned}
 b(\tau) \int_{\{A(\tau)\leq|y|\leq2A(\tau)\}}|\xi(\tau)|^2\,dy
 &\lesssim \lambda^{-2s_c}(\tau)\frac{d}{d\tau} \left( \lambda^{2s_c}(\tau)\int \chi_{A}|\xi(\tau)|^2 dy \right) \\
 &+ \Gamma_{b(\tau)}^{\frac{3}{2} - 10 \nu} + \Gamma_{b(\tau)}^{2a} \| \nabla \xi(\tau) \|_{L^2}^2.
 \end{aligned} 
 \end{equation}
\end{lem}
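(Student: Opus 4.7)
The overall strategy is a localized mass identity for $\xi$, weighted by $\chi_A$ and suitably rescaled by $\lambda^{2s_c}$ so that the spatial $L^2$ mass of $\xi$ on scale $A$ becomes a physical quantity. Specifically, I multiply equation \eqref{eq:xi1} by $\chi_A \bar\xi$, take the imaginary part, and integrate in $y$. Using the identity $\mathrm{Im}(\bar\xi\Delta\xi)=\mathrm{div}\,\mathrm{Im}(\bar\xi\nabla\xi)$ and the algebraic fact $\mathrm{Re}(\bar\xi\,\Lambda\xi)=\tfrac{1}{\sigma_1}|\xi|^2+\tfrac{1}{2}y\cdot\nabla|\xi|^2$, integration by parts on the $\ii b\Lambda\xi$ contribution of $\mathcal L\xi$ yields the key identity
\begin{equation*}
\tfrac{1}{2}\tfrac{d}{d\tau}\!\int\chi_A|\xi|^2\,dy = \tfrac{b}{2}\!\int y\cdot\nabla\chi_A\,|\xi|^2\,dy + bs_c\!\int\chi_A|\xi|^2\,dy + \int\nabla\chi_A\cdot\mathrm{Im}(\bar\xi\nabla\xi)\,dy + \tfrac{1}{2}\!\int\dot\chi_A|\xi|^2\,dy + \mathcal R,
\end{equation*}
where $\mathcal R$ collects the contributions of the modulation parameters, the nonlinear remainder $R(\xi)$, the forcing $\Psi_b$, the damping, and the localized piece of $\mathcal L\xi$ coming from $2\sigma_1\mathrm{Re}(\xi\bar Q_b)|Q_b|^{2\sigma_1-2}Q_b$.

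Next, I insert the $\lambda^{2s_c}$ weight by writing $\tfrac{1}{2}\tfrac{d}{d\tau}\!\int\chi_A|\xi|^2\,dy = \tfrac{1}{2}\lambda^{-2s_c}\tfrac{d}{d\tau}\!\bigl(\lambda^{2s_c}\!\int\chi_A|\xi|^2\,dy\bigr) - s_c(\dot\lambda/\lambda)\!\int\chi_A|\xi|^2\,dy$ and invoking $\dot\lambda/\lambda=-b+O(\Gamma_b^{1-20\nu})$ from \eqref{eq:bLest}. The two occurrences of $bs_c\int\chi_A|\xi|^2$ cancel exactly, leaving only an $O(s_c\Gamma_b^{1-20\nu})\int\chi_A|\xi|^2$ remainder. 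The property $y\cdot\nabla\chi_A\ge\tfrac14$ on $\{A\le|y|\le2A\}$ (with $y\cdot\nabla\chi_A\ge 0$ everywhere) then produces the sought lower bound $\tfrac{b}{2}\int y\cdot\nabla\chi_A\,|\xi|^2\,dy\ge\tfrac{b}{8}\int_{A\le|y|\le2A}|\xi|^2\,dy$ on the LHS.

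The remaining task is to bound all the error terms by $\Gamma_b^{3/2-10\nu}+\Gamma_b^{2a}\|\nabla\xi\|_{L^2}^2$, up to a piece that may be absorbed into the LHS. For the gradient term I use $\|\nabla\chi_A\|_{L^\infty}\lesssim 1/A$ and Young's inequality to get $\lesssim \tfrac{b}{16}\int_{A\le|y|\le2A}|\xi|^2 + \tfrac{C}{bA^2}\|\nabla\xi\|_{L^2}^2$, where $1/A^2=\Gamma_b^{2a}$ and $1/b\lesssim 1$. For the $\dot\chi_A$ term I exploit the identity $\dot\chi_A=-(\dot A/A)\,y\cdot\nabla\chi_A$, which converts it into a fraction $|\dot A/A|\ll b$ of the mass-flux term itself (since $|\dot b|\lesssim \Gamma_b^{1-20\nu}$ and $A=\Gamma_b^{-a}$), and is therefore absorbed. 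All the modulation contributions in $\mathcal R$ couple $\xi\chi_A$ to $Q_b$ or its derivatives, which decay exponentially: because $\chi_A$ vanishes on $\{|y|\le A\}$ with $A=\Gamma_b^{-a}\to\infty$, these scalar products are bounded by $e^{-c\Gamma_b^{-a}}\ll \Gamma_b^{3/2}$; the parts quadratic in $\xi$ are controlled by $\max(|\dot\lambda/\lambda+b|,|\dot\gamma-1|,|\dot x/\lambda|,|\dot b-\beta s_c|)\lesssim\Gamma_b^{1-20\nu}$ from \eqref{eq:bLest}--\eqref{eq:gxest}, combined with \eqref{eq:conxi}, giving $\Gamma_b^{3/2-10\nu}$. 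The nonlinear $R(\xi)$ piece splits into a $|Q_b|^{2\sigma_1-1}|\xi|^2$ part (exponentially small on $\mathrm{supp}\,\chi_A$) and $\|\xi\|_{L^{2\sigma_1+2}}^{2\sigma_1+2}$, estimated by the Sobolev-interpolation \eqref{eq:nnlin}. The forcing $\Psi_b$ is supported in $B(0,R_b)$, disjoint from $\mathrm{supp}\,\chi_A$, hence negligible. Finally, the damping contribution $\eta\lambda^{2-2\sigma_2/\sigma_1}\int\chi_A\,\mathrm{Re}(\bar\xi|Q_b+\xi|^{2\sigma_2}(Q_b+\xi))\,dy$ is handled exactly as in Lemmas \ref{lemma:mod_par} and \ref{thm:refvirial}: using \eqref{eq:initEta}, \eqref{eq:conL} and an analogous Sobolev interpolation under the condition $s<d\sigma_2/(2\sigma_2+2)$ (this is the role of \eqref{eq:s_cond_in}), it is bounded by $\Gamma_b^2$, hence by $\Gamma_b^{3/2-10\nu}$.

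\textbf{Main obstacle.} The delicate point is the balance between $A$ and $b$: the gradient-term estimate gives $\Gamma_b^{2a}/b$ while the $\dot\chi_A$ term contributes $|\dot A/A|\cdot(\text{flux})$, both requiring a careful calibration of the free parameter $a=a(\nu)$ so that the gradient term yields precisely $\Gamma_b^{2a}\|\nabla\xi\|^2$ (logarithmic losses in $b$ being absorbed into $\lesssim$) and the $\dot\chi_A$ term is safely smaller than $b$. A second subtlety is that the modulation contributions involve $\xi$ integrated against $\chi_A$ times a function that is not necessarily localized near the origin (such as $\Lambda\xi$ and $\nabla\xi$), so the exponential suppression is available only for the pieces involving $Q_b$; for the quadratic-in-$\xi$ pieces one must use the fact that the modulation coefficients themselves are $O(\Gamma_b^{1-20\nu})$, together with the gradient bound \eqref{eq:conxi}, to reach the target exponent $\tfrac{3}{2}-10\nu$.
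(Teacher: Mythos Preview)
Your overall strategy---a localized mass identity with cutoff $\chi_A$---is the same as the paper's, but you run it on the $\xi$-equation \eqref{eq:xi1} whereas the paper runs it on the original $\psi$-equation \eqref{eq:main} and only afterwards decomposes $\psi=\lambda^{-1/\sigma_1}(Q_b+\xi)e^{i\gamma}$. In the paper's route the factor $\lambda^{2s_c}$ appears automatically from the change of variables $x\mapsto y$ (since $\int\chi_A|\psi|^2\,dx=\lambda^{2s_c}\int\chi_A|Q_b+\xi|^2\,dy$), and there is no $\Lambda$ operator nor any modulation parameter in the identity; the only remainder is the cross term $R^{(1)}(Q_b,\xi)$, which is easily controlled because $Q_b$ is exponentially localized and $\chi_A$ lives at scale $|y|\sim A\gg 1$.

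Your route is workable but, as written, contains a gap. After inserting the $\lambda^{2s_c}$ weight you record a leftover $O(s_c\Gamma_b^{1-20\nu})\int\chi_A|\xi|^2$, and in $\mathcal R$ the modulation term $-i(\frac{\dot\lambda}{\lambda}+b)\Lambda\xi$ produces another contribution $(\frac{\dot\lambda}{\lambda}+b)\,s_c\int\chi_A|\xi|^2$ of the same form. You propose to bound each of these by $\Gamma_b^{3/2-10\nu}$ via \eqref{eq:conxi}, but \eqref{eq:conxi} controls only $\int|\nabla\xi|^2+|\xi|^2e^{-|y|}\,dy$, not $\int\chi_A|\xi|^2$; and no bootstrap assumption bounds $\|\xi\|_{L^2}$ (indeed $\|\xi\|_{L^2}^2\sim\lambda^{-2s_c}M[\psi_0]$, which diverges as $\lambda\to0$). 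The correct observation is that these two residual terms cancel \emph{exactly}: the combination of $ib\Lambda\xi$ from $\mathcal L$, $-i(\frac{\dot\lambda}{\lambda}+b)\Lambda\xi$ from the modulation, and the $\lambda^{2s_c}$ weight gives precisely $\tfrac12\lambda^{-2s_c}\tfrac{d}{d\tau}(\lambda^{2s_c}\int\chi_A|\xi|^2)$ with \emph{no} $s_c\int\chi_A|\xi|^2$ left over; only the flux-type pieces $\int y\cdot\nabla\chi_A|\xi|^2$ survive, with coefficient $-\tfrac{\dot\lambda}{2\lambda}=\tfrac{b}{2}+O(\Gamma_b^{1-20\nu})$. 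Once you make this exact cancellation explicit, your argument goes through. The paper's choice to work with $\psi$ is precisely what makes this cancellation automatic rather than something to be tracked by hand. (A minor point: $|\dot\gamma-1|$ and $|\dot x/\lambda|$ are only $O(\Gamma_b^{1/2-10\nu})$, not $O(\Gamma_b^{1-20\nu})$, but their quadratic-in-$\xi$ contributions either vanish identically or are flux-type and absorbable, so this does not affect the outcome.)
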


\begin{proof}
We take the scalar product of equation \eqref{eq:main} with $\ii\chi_A\left(\frac{x-x(t)}{\lambda(t)}\right)\psi(t, x)$ and obtain 
\begin{equation*}
 \frac{1}{2} \frac{d}{dt} (\psi, \chi_A \psi) - (\psi, (\partial_t \chi_A) \psi) - ( \nabla_x \psi, \ii (\nabla_x \chi_A) \psi) + \eta(|\psi|^{2\sigma_2} \psi, \chi_A \psi) = 0.
\end{equation*}
By using decomposition \eqref{eq:deco1} and the scaled space and time variables, we rewrite the equation above as
\begin{equation}\label{eq:l2flux12}
\begin{aligned}
 0 & = \frac{1}{2\lambda^{2s_c}} \frac{d}{d\tau} \left( \lambda^{2s_c} (\xi, \chi_A \xi)\right) - (\xi, (\d_\tau\chi_A) \xi) - ( \nabla \xi, \ii (\nabla \chi_A) \xi) \\&+ \eta \lambda^{2 - 2\frac{\sigma_2}{\sigma_1}}(|\xi|^{2\sigma_2} \xi, \chi_A \xi) 
 + R^{(1)}(Q_b,\xi),
\end{aligned}
\end{equation}
where the gradient and the scalar products are taken with respect to the variable
\begin{equation*}
 y(\tau) = \frac{x - x(\tau)}{\lambda(\tau)}
\end{equation*}
and
\begin{equation*}
 \begin{aligned}
 R^{(1)}(Q_b,\xi) &= \frac{1}{2\lambda^{2s_c}} \frac{d}{d\tau} \left(\lambda^{2s_c} \left( (Q_b, \chi_A Q_b) + 2(Q_b, \chi_A \xi) \right) \right) - (Q_b, (\partial_\tau \chi_A) Q_b) \\ & -2 (Q_b, (\partial_\tau \chi_A) \xi) - ( \nabla Q_b, \ii (\nabla \chi_A) Q_b) - 2( \nabla Q_b, \ii (\nabla \chi_A) \xi) \\
 &+ \eta \lambda^{2 - 2\frac{\sigma_2}{\sigma_1}}\left((|Q_b + \xi|^{2\sigma_2} (Q_b + \xi), \chi_A (Q_b+ \xi)) - (|\xi|^{2\sigma_2} \xi, \chi_A \xi) \right). \\
 \end{aligned}
\end{equation*}
We now estimate the reminder term $R^{(1)}(Q_b,\xi)$. Let us recall that for $|y|\geq\frac2b$ we have $Q_b = s_c T_b$, where $T_b$ is exponentially decreasing, see Proposition \ref{thm:profile}. We claim that, for the terms depending only on $Q_b$, we have
\begin{equation*}
\begin{aligned}
 \left|R^{(2)}(Q_b) \right| & = \bigg| \frac{1}{2\lambda^{2s_c}} \frac{d}{d\tau} \left(\lambda^{2s_c} \left( Q_b, \chi_A Q_b \right) \right)- (Q_b, (\partial_\tau \chi_A) Q_b) - ( \nabla Q_b, \ii (\nabla \chi_A) Q_b) \\
 &+ \eta \lambda^{2 - 2\frac{\sigma_2}{\sigma_1}}\left(|Q_b|^{2\sigma_2} Q_b , \chi_A Q_b) \right) \bigg| \\
 &\lesssim s_c^2 + \eta \lambda^{2 - 2\frac{\sigma_2}{\sigma_1}} s_c^{2\sigma_2 + 2}.
\end{aligned}
\end{equation*}
Here we used the bound 
\begin{equation*}
 \begin{aligned}
 \frac{d}{d\tau} \left(\lambda^{2s_c} \left( Q_b, \chi_A Q_b \right) \right) & = s_c^2\frac{d}{d\tau}\left(\lambda^{2s_c}\int\chi_A|T_b|^2\,dy\right) \\
 &\lesssim s_c^3 \lambda^{2 s_c}\left( \frac{\dot{\lambda}}{\lambda} - b + b\right) \| Q_b\|_{L^2}^2 + s_c^2 \lambda^{2 s_c} \dot{b} \partial_b \| Q_b\|_{L^2}^2 \\
 &\lesssim 
 s_c^3 \lambda^{2s_c} \left( \Gamma_b^{\frac{1}{2} - 10\nu} - \frac{c}{\ln{s_c}}\right) + s_c^2 \lambda^{2s_c}\Gamma_b^{\frac{1}{2} - 10\nu} \lesssim s_c^2.
 \end{aligned}
\end{equation*}
that follows from the properties of $Q_b$ \eqref{eq:Qbprop}.
All other terms may be estimated by
\begin{equation*}
 \begin{aligned}
 \left| R^{(1)}(Q_b,\xi) - R^{(2)}(Q_b) \right| \lesssim s_c \left( \int |\nabla \xi|^2 + |\xi|^2e^{-|y|} dy \right)^\frac{1}{2}.
 \end{aligned}
\end{equation*}
To prove the two claims above, we use the following property
\begin{equation*}
 \left|(T_b,\xi) \right| \leq \left( \int |\nabla \xi|^2 + |\xi|^2e^{-|y|} dy \right)^\frac{1}{2},
\end{equation*}
that can be proven similarly to \eqref{eq:prod1} and \eqref{eq:prod2}. 
Thus by using \eqref{eq:conb}, \eqref{eq:conL} and \eqref{eq:initEta}, we obtain that
\begin{equation*}
 \left| R^{(1)}(Q_b,\xi) \right| \lesssim s_c^2 + \eta \lambda^{2 - 2\frac{\sigma_2}{\sigma_1}} s_c^{2\sigma_2 + 2} + s_c \left( \int |\nabla \xi|^2 + |\xi|^2e^{-|y|} dy \right)^\frac{1}{2} \lesssim \Gamma_b^{\frac{3}{2} - 10 \nu}.
\end{equation*}
In particular, from \eqref{eq:l2flux12} we get the following inequality
\begin{equation*}
\begin{aligned}
 \frac{1}{2\lambda^{2s_c}} \frac{d}{d\tau} \left( \lambda^{2s_c} (\xi, \chi_A \xi)\right) &\geq (\xi, (\partial_\tau \chi_A) \xi) + ( \nabla \xi, \ii (\nabla \chi_A) \xi) \\ 
 &- \eta \lambda^{2 - 2\frac{\sigma_2}{\sigma_1}}(|\xi|^{2\sigma_2} \xi, \chi_A \xi) - \Gamma_b^{\frac{3}{2} - 10 \nu}.
\end{aligned}
\end{equation*}
From straightforward computations, we see that
\begin{equation*}
\d_\tau\chi_{A(\tau)}\left(\frac{x-x(\tau)}{\lambda(\tau)}\right)
=-\frac1A\left(\frac{\dot x}{\lambda}+\left(\frac{\dot\lambda}{\lambda}+\frac{\dot A}{A}\right)y\right)\cdot\nabla\chi\left(\frac{x-x(\tau)}{\lambda(\tau)A(\tau)}\right).
\end{equation*}
and consequently, we can rewrite the inequality above as
\begin{equation}\label{eq:chiAxi}
\begin{aligned}
 \frac{1}{2\lambda^{2s_c}}\frac{d}{d\tau} \left(\lambda^{2s_c} (\xi, \chi_A\xi)\right)
 &\geq b (\xi,y \cdot \nabla \chi_A \xi) \\
 &- \left( \xi, \frac{1}{A} \left( \frac{\dot{x}}{\lambda} + \left( \frac{\dot{\lambda}}{\lambda} + b\right) y + \frac{\dot{A}}{A} y \right) \cdot (\nabla \chi) \xi \right)\\
 & + (\nabla \xi, \ii (\nabla \chi_A) \xi ) - \eta \lambda^{2 - 2\frac{\sigma_2}{\sigma_1}} \int \chi_A | \xi|^{2\sigma_2 +2} dy - \Gamma_b^{\frac{3}{2} - 10 \nu}.
\end{aligned}
\end{equation}
The definition of $\chi_A$ \eqref{eq:chiA} implies the following chain of estimates 
\begin{equation}\label{eq:kot6}
\begin{aligned}
\frac18\int_{\{A\leq|y|\leq2A\}}|\xi|^2\,dy
\leq&\frac12\int_{\{A\leq|y|\leq2A\}}\chi'(\frac{|y|}{A})|\xi|^2\,dy
\leq\frac12\int_{\{A\leq|y|\leq2A\}}\frac{|y|}{A}\chi'(\frac{|y|}{A})|\xi|^2\,dy\\
=&\frac12\int_{\{A\leq|y|\leq2A\}}\frac{y}{A}\cdot\nabla\chi(\frac{|y|}{A})|\xi|^2\,dy.
\end{aligned}
\end{equation}
We will now exploit it to bound the terms in inequality \eqref{eq:chiAxi}. For the first term, we can easily infer the following bound
\begin{equation*}
 b \int y \cdot \nabla \chi_A |\xi|^2 dy \geq \frac{b}{8} \int_{\{A\leq|y|\leq2A\}} |\xi|^2 dy.
\end{equation*}
For the second term, the definition of $A(t)$ \eqref{eq:A} and estimate \eqref{eq:gammaB1} for $\Gamma_b$ allow us to infer
\begin{equation*}
\frac{\dot A}{A}=-ac\frac{\dot b}{b^2},
\end{equation*}
which yields 
\begin{equation*}
\frac1A\left|\left(\frac{\dot\lambda}{\lambda}+b+\frac{\dot A}{A}\right)\int y\cdot\nabla\chi|\xi|^2\,dy\right|
\lesssim\Gamma_b^a\Gamma_b^{1-20\nu}
\int_{A\leq|y|\leq2A}|\xi|^2\,dy,
\end{equation*}
where we have used \eqref{eq:bLest} and \eqref{eq:kot6}.
By using \eqref{eq:gxest} we may analogously estimate
\begin{equation*}\begin{aligned}
\frac1A\left|\int \frac{\dot x}{\lambda}\cdot\nabla\chi|\xi|^2\,dy\right|
&\lesssim\Gamma_b^a\left((\int|\nabla\xi|^2+|\xi|^2e^{-|y|}\,dy)^{1/2}+\Gamma_b^{1-20\nu}\right)
\int_{A\leq|y|\leq2A}|\xi|^2\,dy\\
&\lesssim\Gamma_b^a\Gamma_b^{\frac12-10\nu}
\int_{A\leq|y|\leq2A}|\xi|^2\,dy,
\end{aligned}\end{equation*}
where we used \eqref{eq:conxi} in the last inequality.
\newline
For the third term on the right-hand side of \eqref{eq:chiAxi} we use Young's inequality and get
\begin{equation*}
\begin{aligned}
    \frac1A\left|(\nabla\xi,\ii(\nabla\chi)\xi)\right|
\leq\frac1A\|\nabla\xi\|_{L^2}
\left(\int_{A\leq|y|\leq2A}|\xi|^2\,dy\right)^{1/2}
& \leq\frac{40}{bA^2}\|\nabla\xi\|_{L^2}^2 \\
&+\frac{b}{40} \int_{A\leq|y|\leq2A}|\xi|^2\,dy.
\end{aligned}
\end{equation*}
Finally, we consider the contribution coming from the nonlinear damping. By recalling that $\sigma_2 > \sigma_* = 2s_c/(d - 2s_c)$, we have
\begin{equation*}
 s_c < \frac{d}{2} - \frac{d}{2\sigma_2 + 2} = \frac{d\sigma_2}{2\sigma_2 + 2}.
\end{equation*}
This implies that we can choose $s$ such that 
\begin{equation*}
 s_c < s < \frac{d\sigma_2}{2\sigma_2 + 2} = s(\sigma_2).
\end{equation*}
Consequently, we can interpolate the space $\dot H^{s(\sigma_2)}(\R^d)$ between $\dot H^1(\R^d)$ and $\dot H^s(\R^d)$ and obtain that 
\begin{equation}\label{eq:lowerS}
 \| \xi\|_{L^{2\sigma_2 +2}}^{2\sigma_2 + 2} \lesssim \| |\nabla|^{s(\sigma_2)}\xi\|_{L^2}^{2\sigma_2 + 2} \lesssim 
 \| \xi\|_{\dot{H}^1}^{\theta(2\sigma_2 + 2)} \| \xi\|_{\dot{H}^{s}}^{(1 - \theta)(2\sigma_2 + 2)}
\end{equation} 
for some $\theta(s) \in (0,1)$. 
Now from \eqref{eq:conxi} and \eqref{eq:conHs} it follows that
\begin{equation*}
 \| \xi\|_{\dot{H}^1}^{\theta(2\sigma_2 + 2)} \| \xi\|_{\dot{H}^{s}}^{(1 - \theta)(2\sigma_2 + 2)} \leq \Gamma_b^{(1 - 50\nu)(\sigma_2 + 1) + 30 \nu \theta (1 + \sigma_2)}.
\end{equation*}
Thus, by collecting everything, we have that
\begin{equation}\label{eq:xiSigma2}
 \begin{aligned}
 \frac{1}{2\lambda^{2s_c}}\frac{d}{d\tau} \left(\lambda^{2s_c} (\xi, \chi_A\xi)\right)
 & \geq \left(\frac{b}{8} - c \Gamma_b^{a} 
\Gamma_b^{\frac12-10\nu}- \frac{b}{40} \right) \int_{\{A\leq|y|\leq2A\}} |\xi|^2 dy \\
&- \Gamma_b^{2a} \| \nabla \xi\|_{L^2}^2 - \Gamma_b^{\frac{3}{2} - 10 \nu} \\ 
 & - \eta \lambda^{2 - 2\frac{\sigma_2}{\sigma_1}} \Gamma_b^{(1 - 50\nu)(\sigma_2 + 1) + 30 \nu \theta (1 + \sigma_2)}.
 \end{aligned}
\end{equation}
Inequality \eqref{eq:l2_flux} is a simple consequence of the choice of $\eta$ \eqref{eq:initEta}. 
\end{proof}
Let us emphasize that the assumption $\sigma_2>\sigma_*$, see \eqref{eq:sigma2}, is required to obtain the bound \eqref{eq:lowerS}. As already remarked, this assumption is related to the fact that it is not possible to control Sobolev norms of $\xi$ rougher than the critical norm $\dot{H}^{s_c}$. Consequently, our argument cannot be applied for instance to the linearly damped NLS equation (equation \eqref{eq:main} with $\sigma_2=0$) since we would need to estimate the term
\begin{equation*}
 \eta \lambda^2 \int \chi_A |\xi|^2 dy.
\end{equation*}
By using estimates \eqref{eq:refvir} and \eqref{eq:l2_flux}, it is possible to define a Lyapunov functional that provides an upper bound for $\dot b$. In the next lemma, it will be fundamental to exploit the decay of the total mass. We notice that this fact suggests that a different regularization of the focusing NLS dynamics could not yield the same result. 
\newline
In what follows we define
\begin{equation}\label{eq:Jclean}
 \begin{aligned}
 J(\tau)&= \int(1 - \chi_{A(\tau)} ) |\xi(\tau)|^2 dy + \|Q_{b(\tau)}\|_{L^2}^2 - \| Q\|_{L^2}^2 + 2(\xi(\tau), Q_{b(\tau)}) \\ &- b(\tau)f(\tau) + \int_0^{b(\tau)} f(v)\, dv,
 \end{aligned}
\end{equation}
where $f$ is defined in \eqref{eq:f1} and $Q$ is the unique positive solution to \eqref{eq:gsQ}.
\begin{lem} \label{thm:global_virial}
 There exist $a_1 = a_1(\nu) >0$
 such that for any $a < a_1$ and any $t \in [0,T_1)$, there exist $c >0$ such that
\begin{equation}\label{eq:second_monotonicity}
\begin{aligned}
 \frac{d}{d\tau} J(\tau) &\lesssim b(\tau) s_c + \Gamma_{b(\tau)}^{2a} \| \nabla \xi(\tau) \|_{L^2}^2 \\ &- b(\tau) \left( \Gamma_{b(\tau)} + \int |\nabla \tilde{\xi}(\tau)|^2 + |\tilde{\xi}(\tau)|^2 e^{-|y|} dy\right).
\end{aligned}
\end{equation}
\end{lem}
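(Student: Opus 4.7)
The plan is to combine the $L^2$ mass balance \eqref{eq:mass_grow}, the localized mass-flux inequality \eqref{eq:l2_flux}, and the refined virial bound \eqref{eq:refvir}, exploiting a cancellation of the outgoing flux $b\int_{A\leq|y|\leq 2A}|\xi|^2\,dy$ between the latter two.

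\textbf{Rewriting $J$.} Using $|Q_b+\xi|^2 = |Q_b|^2 + 2Re(\bar Q_b \xi) + |\xi|^2$ one verifies the identity $\int(1-\chi_A)|\xi|^2\,dy + \|Q_b\|_{L^2}^2 + 2(\xi, Q_b) = \|Q_b+\xi\|_{L^2}^2 - \int\chi_A|\xi|^2\,dy = \lambda^{-2s_c}\|\psi\|_{L^2}^2 - \int\chi_A|\xi|^2\,dy$. Since $\|Q\|_{L^2}^2$ is constant and the virial tail $-bf + \int_0^b f\,dv$ has $\tau$-derivative exactly $-b\dot f$, the task reduces to estimating $\frac{d}{d\tau}J = \frac{d}{d\tau}\bigl(\lambda^{-2s_c}\|\psi\|_{L^2}^2\bigr) - \frac{d}{d\tau}\int\chi_A|\xi|^2\,dy - b\dot f$.

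\textbf{Bounding the three contributions.} First, \eqref{eq:mass_grow} passed to scaled time gives $\frac{d}{d\tau}\|\psi\|_{L^2}^2 = -2\eta\lambda^2\|\psi\|_{L^{2\sigma_2+2}}^{2\sigma_2+2} \leq 0$, and expanding $\frac{d}{d\tau}(\lambda^{-2s_c}\|\psi\|^2) = -2s_c(\dot\lambda/\lambda)\|Q_b+\xi\|_{L^2}^2 + \lambda^{-2s_c}\frac{d}{d\tau}\|\psi\|^2$ together with \eqref{eq:bLest} yields $\frac{d}{d\tau}\bigl(\lambda^{-2s_c}\|\psi\|_{L^2}^2\bigr) \lesssim b\,s_c$, using the bootstrap bound $\lambda \leq \Gamma_b^{10}$ and $\Gamma_b \sim s_c$ to keep $\lambda^{-2s_c} = O(1)$, so that $\|Q_b+\xi\|_{L^2}^2 = \lambda^{-2s_c}M[\psi]$ is bounded. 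Second, expanding $\lambda^{-2s_c}\frac{d}{d\tau}(\lambda^{2s_c}\int\chi_A|\xi|^2)$ and invoking \eqref{eq:l2_flux}, $-\frac{d}{d\tau}\int\chi_A|\xi|^2\,dy \leq -\frac{b}{K_1}\int_{A\leq|y|\leq 2A}|\xi|^2\,dy + C\Gamma_b^{3/2-10\nu} + C\Gamma_b^{2a}\|\nabla\xi\|_{L^2}^2 + C\,b\,s_c$ (the last term coming from $2s_c(\dot\lambda/\lambda)\int\chi_A|\xi|^2$, controlled as above). Third, \eqref{eq:refvir} directly gives $-b\dot f \leq -bC\Bigl(\Gamma_b + \int|\nabla\tilde\xi|^2 + |\tilde\xi|^2 e^{-|y|}\,dy\Bigr) + b\,s_c + \frac{b}{K_2}\int_{A\leq|y|\leq 2A}|\xi|^2\,dy$.

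\textbf{Assembling and the flux cancellation.} Summing the three bounds, the mass-flux terms collect into $\bigl(-\frac{1}{K_1} + \frac{1}{K_2}\bigr)\,b\int_{A\leq|y|\leq 2A}|\xi|^2\,dy$; by choosing the constant in the refined virial large enough compared to the absolute geometric constant arising from the cutoff $\chi$ in \eqref{eq:l2_flux}, this coefficient is non-positive and the flux term is dropped. The residual errors reduce to $b\,s_c$, $\Gamma_b^{2a}\|\nabla\xi\|_{L^2}^2$, and $\Gamma_b^{3/2-10\nu}$; for the last, taking $a = a(\nu)$ small enough makes $\Gamma_b^{3/2-10\nu}\leq b\,\Gamma_b$ in the regime $b \sim -\pi/\ln(s_c)$, so that it is absorbed into the good negative term $-b\,\Gamma_b$, yielding exactly \eqref{eq:second_monotonicity}.

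\textbf{Main obstacle.} The sharp point is the outgoing flux cancellation: since both \eqref{eq:refvir} and \eqref{eq:l2_flux} are one-sided inequalities with constants depending on the cutoff geometry, one must verify that the flux term supplied by the refined virial is strictly dominated by the one gained from the localized $L^2$-identity. A secondary but conceptually essential observation is that the damping contribution to $\frac{d}{d\tau}(\lambda^{-2s_c}\|\psi\|^2)$ carries the favorable sign --- nonlinear damping dissipates mass rather than injecting it --- which is precisely why the Lyapunov analysis of the undamped case survives the introduction of the dissipative term, and does not require any further smallness of $\eta$ at this stage.
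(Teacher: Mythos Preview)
Your strategy coincides with the paper's: rewrite $J$ via the total mass, differentiate, and combine \eqref{eq:l2_flux} with \eqref{eq:refvir} so that the outgoing flux is absorbed. There is, however, a genuine gap.

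You bound $-2s_c(\dot\lambda/\lambda)\|Q_b+\xi\|_{L^2}^2$ and $2s_c(\dot\lambda/\lambda)\int\chi_A|\xi|^2\,dy$ \emph{separately} by $O(bs_c)$, asserting that $\|Q_b+\xi\|_{L^2}^2=\lambda^{-2s_c}M[\psi]$ stays bounded because ``$\lambda^{-2s_c}=O(1)$''. But the bootstrap hypothesis \eqref{eq:conL} gives only the \emph{upper} bound $\lambda\leq\Gamma_b^{10}$; there is no positive lower bound on $\lambda$, and in the regime under study $\lambda(t)\to0$, so $\lambda^{-2s_c}\to\infty$. Neither quantity is uniformly controlled, and no bootstrap bound on $\|\xi\|_{L^2}$ is available to rescue this. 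The paper fixes the issue by first \emph{adding} the two terms: their sum is
\[
-2s_c\frac{\dot\lambda}{\lambda}\Bigl(\int(1-\chi_A)|\xi|^2\,dy+\|Q_b\|_{L^2}^2+2(\xi,Q_b)\Bigr),
\]
and now Hardy's inequality $\int(1-\chi_A)|\xi|^2\,dy\lesssim A^2\|\nabla\xi\|_{L^2}^2$ together with \eqref{eq:conxi} and the restriction $a\leq\tfrac12(1-20\nu)$ makes the bracket uniformly $O(1)$, yielding the desired $O(bs_c)$.

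Regarding your ``Main obstacle'': the paper avoids any balancing of flux constants by multiplying \eqref{eq:refvir} by $b$ and then \emph{substituting} \eqref{eq:l2_flux} directly for the resulting $b\int_{A\leq|y|\leq 2A}|\xi|^2\,dy$. The flux is absorbed in a single step rather than appearing twice with opposite signs, so no relation between $K_1$ and $K_2$ is needed. Your proposed remedy of ``choosing the constant in the refined virial large enough'' is in any case not available, since that constant is fixed by Lemma~\ref{thm:refvirial}.
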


\begin{proof} 
By multiplying inequality \eqref{eq:refvir} by $b$, we obtain 
\begin{equation}\label{eq:kot7}
 C^2 b\left(\int |\nabla \tilde{\xi}|^2 + |\tilde{\xi}|^2 e^{-|y|} dy + \Gamma_b \right) \leq C b \left( \frac{d}{d\tau} f + s_c\right) + b\int_{\{A\leq|y|\leq2A\}} |\xi|^2 dy.
\end{equation}
The last term on the right-hand side of \eqref{eq:kot7} may be estimated by \eqref{eq:l2_flux}, so that
\begin{equation}\label{eq:massFlux}
 b \int_{\{A(\tau)\leq|y|\leq2A(\tau)\}}|\xi|^2dy \lesssim \lambda^{-2s_c} \frac{d}{d\tau}\left( \lambda^{2s_c} \int \chi_A |\xi|^2 dy \right) + \Gamma_b^{\frac{3}{2} - 10 \nu} + \Gamma_{b}^{2a} \| \nabla \xi \|_{L^2}^2.
\end{equation}
In order to find a satisfactory bound on the first term on the right-hand side of the inequality above, we exploit the fact that the total mass is non-increasing. By writing
\begin{equation*}
\begin{aligned}
 \frac{d}{d\tau} \| \psi \|_{L^2}^2 = \frac{d}{d\tau} \left( \lambda^{2s_c} \int(\chi_A + (1 - \chi_A) ) |\xi|^2 + |Q_b|^2 dy + 2\lambda^{2s_c}(\xi, Q_b) \right) \leq 0,
\end{aligned}
\end{equation*}
we have that
\begin{equation}\label{eq:dissImp}
\begin{aligned}
 \lambda^{-2s_c} \frac{d}{d\tau}\left( \lambda^{2s_c} \int \chi_A |\xi|^2 dy \right) &\leq - \frac{d}{d\tau} \left( \int(1 - \chi_A ) |\xi|^2 + |Q_b|^2 dy + 2(\xi, Q_b) \right) \\ &
 - 2s_c \frac{\dot{\lambda}}{\lambda} \left( \int(1 - \chi_A) |\xi|^2 + |Q_b|^2 dy + 2(\xi, Q_b) \right). 
\end{aligned}
\end{equation}
From the definition of $\chi_A$ given in \eqref{eq:chiA} and by Hardy's inequality, we have
\begin{equation*}
 \int(1 - \chi_A) |\xi|^2 \leq \int_{|y| \leq 3A} \frac{|y|^2}{|y|^2} |\xi|^2 dy \lesssim A^2 \| \nabla \xi\|_{L^2}^2. 
\end{equation*}
Similarly, we can obtain a comparable bound in dimensions one and two, see \cite[Appendix $C$]{MeRa05}.
In particular, for any dimension we conclude that
\begin{equation}\label{eq:hardyXi}
 \int(1 - \chi_A) |\xi|^2 \lesssim A^2 \int |\nabla \xi|^2 + |\xi| e^{-|y|} dy. 
\end{equation}
Thus, by using estimates \eqref{eq:bLest}, \eqref{eq:enest} and \eqref{eq:conxi} we can bound the second term on the right-hand side of \eqref{eq:dissImp} by
\begin{equation*}
\begin{aligned}
 &\left| s_c \frac{\dot{\lambda}}{\lambda} \left( \int(1 - \chi_A) |\xi|^2 + |Q_b|^2 dy + 2(\xi, Q_b) \right) \right| \\
 &\leq \left|s_c \left(\frac{\dot{\lambda}}{\lambda} + b\right) \left( \int(1 - \chi_A) |\xi|^2 + |Q_b|^2 dy + 2(\xi, Q_b) \right) \right| \\ 
 & + \left| s_c b \left( \int(1 - \chi_A) |\xi|^2 + |Q_b|^2 dy + 2(\xi, Q_b) \right) \right| \\
 & \lesssim s_c \left( \Gamma_b^{1 - 20 \nu} + b \right)\left( (1 + A^2)\int |\nabla \xi|^2 + |\xi| e^{-|y|} dy + \| Q_b\|_{L^2}^2 \right).
\end{aligned}
\end{equation*}
Now we want that
\begin{equation}\label{eq:hardyXi2}
 A^2 \int |\nabla \xi|^2 + |\xi| e^{-|y|} dy \leq A^2 \Gamma_b^{1 - 20\nu} = \Gamma_{b}^{-2a + 1 - 20\nu} \lesssim 1,
\end{equation}
where we used the definition of $A$ in \eqref{eq:A} and \eqref{eq:conxi}. That is $a$ must satisfy the following inequality 
\begin{equation*}
 a \leq \frac{1 - 20\nu}{2}. 
\end{equation*}
This implies that
\begin{equation*}
 \left| s_c \frac{\dot{\lambda}}{\lambda} \left( \int(1 - \chi_A) |\xi|^2 + |Q_b|^2 dy + 2(\xi, Q_b) \right) \right| \lesssim s_c b.
\end{equation*}
By combining \eqref{eq:massFlux} with \eqref{eq:dissImp}, we obtain
\begin{equation*}
 \begin{aligned}
 b \int_{\{A\leq|y|\leq2A\}} |\xi|^2dy &\lesssim - \frac{d}{d\tau} \left( \int(1 - \chi_A ) |\xi|^2 + |Q_b|^2 \, dy + 2(\xi, Q_b) \right) \\ &+ s_c b + \Gamma_b^{\frac{3}{2} - 10 \nu} + \Gamma_{b}^{2a} \| \nabla \xi\|_{L^2}^2. 
 \end{aligned}
\end{equation*}
By plugging the above inequality into \eqref{eq:kot7}, we infer
\begin{equation}\label{eq:primeJ}
 \begin{aligned}
 b\left(\int |\nabla \tilde{\xi}|^2 + |\tilde{\xi}|^2 e^{-|y|} dy + \Gamma_b \right) &\lesssim b \frac{d}{d\tau} f \\ 
 &- \frac{d}{d\tau} \left( \int(1 - \chi_A ) |\xi|^2 dy + \|Q_b\|_{L^2}^2 + 2(\xi, Q_b) \right) \\ 
 &+ s_c b + \Gamma_b^{\frac{3}{2} - 10 \nu} + \Gamma_{b}^{2a} \| \nabla \xi\|_{L^2}^2. 
 \end{aligned}
\end{equation}
Finally, let us consider $f=f(\tau)$ as defined in \eqref{eq:f1}. By the monotonicity property of $b$, see \eqref{eq:locvi} for instance, we denote - by some abuse of notation - $f=f(b(\tau))$. In this way, we may write
\begin{equation*}
 b \frac{d}{d\tau} f = \frac{d}{d\tau} (bf) - \frac{d}{d\tau} \int_0^b f(v)\, dv.
\end{equation*}
Let us now recall the definition of $J(\tau)$ given in \eqref{eq:Jclean}, we have
\begin{equation*}
\begin{aligned}
 J(\tau)&= \int(1 - \chi_{A(\tau)} ) |\xi(\tau)|^2 \, dy + \|Q_{b(\tau)}\|_{L^2}^2 - \| Q\|_{L^2}^2 + 2(\xi(\tau), Q_{b(\tau)}) \\ &- b(\tau)f(\tau) + \int_0^{b(\tau)} f(v)\, dv.
\end{aligned}
\end{equation*}
By using the previous identities and \eqref{eq:conb}, we see that \eqref{eq:primeJ} implies the following estimate
\begin{equation}\label{eq:secondJ}
 \begin{aligned}
 b\left(\int |\nabla \tilde{\xi}|^2 + |\tilde{\xi}|^2 e^{-|y|} \, dy + \Gamma_b \right) \lesssim - \frac{d}{d\tau}J + bs_c +\Gamma_{b}^{2a} \| \nabla \xi \|_{L^2}^2,
 \end{aligned}
\end{equation}
which readily gives \eqref{eq:second_monotonicity}.
\end{proof}	

Let us now discuss how the functional $J$ is related to the control parameter $b$. First, we define $K(\tau)$ as
\begin{equation}\label{eq:Kdef}
 K(\tau) = \|Q_{b(\tau)}\|_{L^2}^2 - \| Q \|_{L^2}^2 - b(\tau)f(\tau) + \int_0^{b(\tau)} f(v)\, dv 
\end{equation}
where $Q$ is the ground state profile to \eqref{eq:main}, see \eqref{eq:gsQ}. In this way, we obtain that
\begin{equation*}
\begin{aligned}
 J(\tau) - K(\tau) = \int(1 - \chi_{A(\tau)} ) |\xi(\tau)|^2 \, dy + 2(\xi(\tau), Q_{b(\tau)}).
\end{aligned}
\end{equation*}
Thus, by using \eqref{eq:hardyXi}
 and \eqref{eq:enest1} we get
\begin{equation}\label{eq:j-kupper}
 J(\tau) - K(\tau) \lesssim (1 + A^2) \int |\xi|^2 e^{-|y|} + |\nabla \xi|^2 \, dy + \Gamma_b^{1 - \nu}.
\end{equation}
We stress that to provide a satisfactory upper bound on the difference $J-K$, it is not sufficient to consider the weaker bound \eqref{eq:enest} in the inequality above. Indeed, in the proof of the bootstrap in the next section, we will first show that the control \eqref{eq:conxi} is not satisfactory to obtain the better bound \eqref{eq:butH1}. \newline
To obtain a lower bound we observe that, by using \eqref{eq:entok}, we have 
\begin{equation*}
 \begin{aligned}
 J - K & = \int(1 - \chi_{A} ) |\xi|^2 dy - 2\lambda^{2 - 2s_c} E[\psi] + 2E[Q_b] + \| \nabla \xi\|_{L^2}^2 \\
 & + 2\left(\ii s_c \beta \partial_b Q_b + \ii b \Lambda Q_b + \Psi_b,\xi \right) - \int R^{(2)}(\xi) dy
 \end{aligned}
\end{equation*}
where $R^{(2)}(\xi)$ is defined in \eqref{eq:R2}. We rewrite the equation above as 
\begin{equation}\label{eq:j-k11}
 \begin{aligned}
 J - K = - 2\lambda^{2 - 2s_c} E[\psi] + 2E[Q_b] + 2\left(\ii s_c \beta \partial_b Q_b + \Psi_b,\xi \right) \\ 
 + ( \mathcal{L}^{(1)} \xi, \xi) - \int \chi_A |\xi|^2\, dy - \frac{1}{\sigma_1 + 1} \int R^{(3)}(\xi) dy
 \end{aligned}
 \end{equation}
 where
 \begin{equation}\label{eq:linMR}
 \mathcal{L}^{(1)} \xi = - \Delta \xi + \xi - 2\sigma_1 Re(\xi \bar{Q}_b) |Q_b|^{2\sigma_1 - 2} Q_b - |Q_b|^{2\sigma_1}\xi
 \end{equation}
 and 
\begin{equation*}
 \begin{aligned}
 R^{(3)}(\xi) = |Q_b + \xi|^{2\sigma_1 + 2} - |Q_b|^{2\sigma_1 + 2} - (2\sigma_1 + 2)|Q_b|^{2\sigma_1} Re(Q_b \bar{\xi}) \\
 - (2\sigma_1 + 2) |Q_b|^{2\sigma_1 -2}\left(|Q_b|^2 |\xi|^2 + 2\sigma_1 Re(Q_b \bar{\xi})^2 \right).
 \end{aligned}
\end{equation*}
We recall the coercivity property 
\begin{equation*}
 ( \mathcal{L}^{(1)} \xi, \xi) - \int \chi_A |\xi|^2\, dy\geq C\int |\xi|^2 e^{-|y|} + |\nabla \xi|^2\, dy - \Gamma_b^{\frac{3}{2} - 10 \nu}
\end{equation*}
which was proved in \cite[Appendix $D$]{MeRa05}. Then in \eqref{eq:j-k11} by further using \eqref{eq:conEP}, \eqref{eq:Qbprop}, \eqref{eq:prod2} and \eqref{eq:prod3} to bound the rests as
\begin{equation*}
 \left|2\left(\ii s_c \beta \partial_b Q_b + \Psi_b,\xi \right) 
 - \frac{1}{\sigma_1 + 1} \int R^{(3)}(\xi) dy\right| \lesssim \Gamma_b
\end{equation*}
we obtain
\begin{equation}\label{eq:j-klower}
 J - K \gtrsim \int |\xi|^2 e^{-|y|} + |\nabla \xi|^2\, dy- \Gamma_b - s_c.
\end{equation}
By combining \eqref{eq:j-klower} and \eqref{eq:j-kupper}, we obtain that 
\begin{equation}\label{eq:j-k}
\begin{aligned}
 \int |\xi|^2 e^{-|y|} + |\nabla \xi|^2\, dy- \Gamma_b - s_c \lesssim J-K \lesssim (1 + A^2) \int |\xi|^2 e^{-|y|} + |\nabla \xi|^2\, dy+ \Gamma_b^{1 - \nu}.
\end{aligned}
\end{equation}
Now we want to prove that $K(\tau)$ is a small perturbation of $b^2(\tau)$. By slightly abusing notations again, we write $K(\tau) = K(b(\tau))$. The following lemma can be found in \cite[Section $4$]{MeRaSz10}.
\begin{lem}\label{eq:kPerB}
 There exists $b^{(1)} >0$ such that for any $0 < b < b_1 < b^{(1)}$, we have
 \begin{equation}\label{eq:k-k}
 K(b) - K(b_1) \lesssim s_c
 \end{equation}
 and 
 \begin{equation}\label{eq:k-b}
 b^2 - s_c \lesssim K(b) \lesssim b^2 + s_c. 
 \end{equation}
\end{lem}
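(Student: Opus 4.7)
My plan is to expand $K(b)$ to leading order in $b$ and $s_c$, recognise it as a positive multiple of $b^2$ plus an $O(s_c)$ error, and read off both estimates. Following the convention implicit in \eqref{eq:Kdef}, I treat $K$ as a function of $b$ alone, so that the $\tilde\xi$-dependence of $f(\tau)$ from \eqref{eq:f1} is suppressed and $\int_0^b f(v)\,dv$ is the antiderivative along the $b$-axis of the reduced profile $f(b) = -\tfrac{1}{2}(\ii\tilde Q_b, y\cdot\nabla\tilde Q_b)$.

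For the leading order of $f$ I would use the identity $(\Lambda Q_b,\ii Q_b) = -\tfrac{b}{2}\|xQ\|_{L^2}^2(1+\delta_1(s_c,b))$ from \eqref{eq:Qbprop}, together with the smallness \eqref{eq:zetaH1} of the localised outgoing radiation $\tilde\zeta_b$ and the exponential decay of $T_b$ in \eqref{eq:Tb}, to conclude
\begin{equation*}
f(b) \;=\; -\tfrac{b}{4}\,\|xQ\|_{L^2}^2\,(1+o(1)) + O(s_c).
\end{equation*}
Integrating in $v$ and combining with the mass expansion $\|Q_b\|_{L^2}^2-\|Q\|_{L^2}^2 = C(\sigma_1) b^2 + O(b^4) + O(s_c)$ coming from the proposition following \eqref{eq:Qb_0} then yields
\begin{equation*}
K(b) \;=\; \Bigl[C(\sigma_1) + \tfrac{1}{8}\|xQ\|_{L^2}^2\Bigr]\, b^2\,(1+o(1)) + O(s_c).
\end{equation*}
Since the bracketed constant is strictly positive, this already proves the two-sided bound \eqref{eq:k-b} once $b$ and $s_c$ are small enough. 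The monotonicity estimate \eqref{eq:k-k} then follows immediately: when $0<b<b_1<b^{(1)}$ we have $b^2-b_1^2<0$ and the $b^2$-coefficient above is positive, hence
\begin{equation*}
K(b)-K(b_1) \;=\; \Bigl[C(\sigma_1) + \tfrac{1}{8}\|xQ\|_{L^2}^2\Bigr](b^2-b_1^2)(1+o(1)) + O(s_c) \;\leq\; O(s_c).
\end{equation*}

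The main obstacle is justifying the expansion of $f(b)$ rigorously. One must carefully track three kinds of perturbations of the leading pairing $(\ii Q_b, y\cdot\nabla Q_b)$: (i) the $s_c T_b$ correction inside $Q_b$, which is exponentially localised by \eqref{eq:Tb}; (ii) the cross term $(\ii\tilde\zeta_b, y\cdot\nabla Q_b)$, controlled by Cauchy--Schwarz using \eqref{eq:zetaH1} against the polynomial-weighted exponential decay of $Q_b$ in \eqref{eq:polEst}; and (iii) the quadratic self-pairing $(\ii\tilde\zeta_b, y\cdot\nabla\tilde\zeta_b)$, bounded by $\|\tilde\zeta_b\|_{H^1}^2\lesssim \Gamma_b^{1-c\rho}$, which via \eqref{eq:crhoNU} is dominated by $O(s_c^{1-o(1)})$ and hence absorbed in the $O(s_c)$ error. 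Once these routine but delicate bilinear estimates are in place, what remains is elementary calculus in $b$.
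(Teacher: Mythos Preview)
Your approach is the same as the paper's (which in turn defers to \cite[Section 4]{MeRaSz10}): expand $K$ using the properties of $Q_b$ in \eqref{eq:Qbprop} and the smallness of $\tilde\zeta_b$ in \eqref{eq:zetaH1} to obtain $K(b)=c\,b^2(1+o(1))+O(s_c)$ with $c>0$, from which both \eqref{eq:k-b} and \eqref{eq:k-k} follow. However, there is a sign slip that undermines your positivity claim.

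From \eqref{eq:Qbprop} one has $(\ii Q_b, y\cdot\nabla Q_b)=(\Lambda Q_b,\ii Q_b)=-\tfrac{b}{2}\|xQ\|_{L^2}^2(1+\delta_1)$, so the reduced profile is
\[
f(b)\;=\;-\tfrac{1}{2}(\ii\tilde Q_b, y\cdot\nabla\tilde Q_b)\;=\;+\tfrac{b}{4}\,\|xQ\|_{L^2}^2(1+o(1))+O(s_c),
\]
with a plus sign, not a minus. Consequently $-bf(b)+\int_0^b f(v)\,dv=-\tfrac{1}{8}\|xQ\|_{L^2}^2\,b^2(1+o(1))+O(s_c)$ and
\[
K(b)\;=\;\Bigl[C(\sigma_1)-\tfrac{1}{8}\|xQ\|_{L^2}^2\Bigr]b^2(1+o(1))+O(s_c).
\]
The bracketed constant is no longer manifestly positive; you need the actual value of $C(\sigma_1)$. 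In the critical limit the computation in \cite{MeRa03,MeRa05} gives $C(\sigma_1)\to\tfrac{1}{4}\|xQ\|_{L^2}^2$, so the coefficient is $\tfrac{1}{8}\|xQ\|_{L^2}^2(1+o(1))>0$ and your argument goes through --- but this is precisely the nontrivial input you glossed over by writing the wrong sign.

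A secondary point: your ``convention'' of suppressing the $\tilde\xi$-dependence of $f$ is not just notational. The term $-(\ii\tilde\xi,\Lambda\tilde Q_b)$ in \eqref{eq:f1} is genuinely present in $K$ as defined in \eqref{eq:Kdef}; it is small only because the orthogonality $(\xi,\ii\Lambda Q_b)=0$ from \eqref{eq:ortho4} kills the leading contribution, leaving cross terms with $\tilde\zeta_b$ that are controlled by \eqref{eq:zetaH1}. You should invoke this explicitly rather than relegate it to a convention.
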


The proof of this lemma follows from the properties of $Q_b$ stated in \eqref{eq:Qbprop}, the definition of $f$ in \eqref{eq:f1} the decomposition \eqref{eq:deco2}, and the smallness of the outgoing radiation \eqref{eq:zetaH1}. Observe that by collecting together \eqref{eq:j-k} and \eqref{eq:k-b}, we have that $J(\tau)$ is close to $b^2(\tau)$ up to smaller order corrections and up to the term $A^2 \int |\xi|^2 e^{-|y|} + |\nabla \xi|^2 dy$. Consequently, if $a$ and $b$ are small enough, we obtain the following inequality
\begin{equation}\label{eq:j-b}
 \left| J(\tau) - db^2(\tau)\right| \lesssim \Gamma_{b(\tau)}^{1 - 20\nu} + A^2 \int |\xi|^2 e^{-|y|} + |\nabla \xi|^2\, dy\leq \nu b^2(\tau).
\end{equation}

\section{Proof of the Bootstrap} \label{ch:bootstrap}

In this section, we are going to prove Proposition \ref{thm:boot}. We proceed in the following order.

\begin{enumerate}
\item Using the monotonicity properties \eqref{eq:locvi} and \eqref{eq:second_monotonicity}, we refine the control over the remainder $\xi$ as in \eqref{eq:butH1}.
\item Inequalities \eqref{eq:locvi} and \eqref{eq:second_monotonicity} also imply the dynamical trapping of $b$ \eqref{eq:butb}. In particular, $b$ is almost constant and close to the value $b^*(s_c)>0$ defined in \eqref{eq:s_c}.
\item From \eqref{eq:bLest}, we obtain the equation for the scaling parameter
\begin{equation*}
	\frac{\dot{\lambda}}{\lambda} \sim - b + \Gamma_b.
\end{equation*}
The previous points yield \eqref{eq:butL} and a precise law for $\lambda$.
\item By finding suitable bounds on the time derivatives of the energy and the momentum, we will prove \eqref{eq:butEP}. 
\item Finally, we will deduce the $\dot{H}^s$-norm control of $\xi$ in \eqref{eq:butHs}.
\end{enumerate}

We shall stress that the first three points of our scheme are consequences of the local virial law \eqref{eq:locvi} and the monotonicity formula \eqref{eq:second_monotonicity} and their proofs are very similar to those in \cite{MeRaSz10}. On the other hand, in the undamped case, the fourth point comes naturally from the conservation of the energy and the momentum. In our case, we will show that the growth of the time-dependent energy and momentum (see equations \eqref{eq:momGrow} and \eqref{eq:en_grow}) can be controlled until the blow-up time by our choice of $\eta$. Finally, with respect to the proof in \cite{MeRaSz10}, the fifth point requires an additional change to treat the new dissipative term. For the convenience of the reader, we will restate the bootstrap proposition below.

\begin{prop} \label{thm:boot1}
There exists $s_c^*>0$, $s^*>s_c^*$, $\nu^* >0$ and $a^*(\nu^*)> a_*(\nu^*) >0$, such that for any $s_c < s_c^*$, $s_c < s < s^*$, $\nu < \nu^*$ and $ a_* < a < a^*$ and for any $t \in [0,T_1),$ the following inequalities are true: 
\begin{align}\label{eq:butb1}
&\Gamma_{b(t)}^{1 + \nu^4} \leq s_c \leq \Gamma_{b(t)}^{1 - \nu^4}, \\ \label{eq:butL1}
&0 \leq \lambda(t) \leq \Gamma_{b(t)}^{20}, \\ \label{eq:butEP1}
& \lambda^{2 - 2s_c} |E[\psi(t)]| + \lambda^{1 - 2s_c} |P[\psi(t)]| \leq \Gamma_{b(t)}^{3 - 10\nu}, \\ \label{eq:butHs1}
&\int ||\nabla|^{s} \xi(t)|^2 \leq \Gamma_{b(t)}^{1 - 45\nu}, \\ \label{eq:butH11}
& \int |\nabla \xi(t)|^2 + |\xi(t)|^2 e^{-|y|}\, dy\leq \Gamma_{b(t)}^{1 - 10\nu}.
\end{align}
\end{prop}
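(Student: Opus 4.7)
The approach follows verbatim the five-step roadmap outlined right after the statement. The global strategy is a continuity argument: since Proposition \ref{prp:deco} already provides the weaker bounds on $[0,T_1)$, strictly improving all of them on the same interval forces by continuity that $T_1$ can be extended, and a standard connectedness argument yields the desired validity on all of $[0,T_{max})$. The ordering is dictated by the chain of implications: $b$ and the $H^1$-bound on $\xi$ must be closed jointly using the virial machinery of Section \ref{ch:parameters}; once $b$ is trapped, the law for $\lambda$ follows from the modulation equation; the energy-momentum bound is the genuinely new ingredient; and the $\dot H^s$-bound is the final step.

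For the trapping of $b$ in \eqref{eq:butb1} and the improvement \eqref{eq:butH11}, I would combine the local virial lower bound \eqref{eq:locvi} for $\dot b$ with the monotonicity-type estimate \eqref{eq:second_monotonicity} for the Lyapunov functional $J$. Using \eqref{eq:j-b}, which identifies $J$ with $d b^2$ up to lower-order corrections, integrating \eqref{eq:second_monotonicity} in $\tau$ and absorbing the $\Gamma_b^{2a}\|\nabla\xi\|_{L^2}^2$ contribution by choosing $a$ large enough yields
\begin{equation*}
\int_{\tau_1}^{\tau_2} b(s)\left(\Gamma_{b(s)} + \int |\nabla\tilde\xi|^2 + |\tilde\xi|^2 e^{-|y|}\,dy\right) ds
\lesssim b^2(\tau_1) - b^2(\tau_2) + s_c\int_{\tau_1}^{\tau_2} b(s)\,ds.
\end{equation*}
A standard trapping argument as in \cite{MeRaSz10} then shows that $b(\tau)$ remains in a strictly improved neighborhood of $b^*(s_c)$ defined in \eqref{eq:s_c}, proving \eqref{eq:butb1}, and combining the monotonicity with the two-sided estimate \eqref{eq:j-k} yields the refined remainder bound \eqref{eq:butH11}. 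The scaling law \eqref{eq:butL1} is then obtained directly from \eqref{eq:bLest}, which reads $\dot\lambda/\lambda = -b + O(\Gamma_b^{1-20\nu})$, after reverting to the original time via $dt = \lambda^2\,d\tau$ and using \eqref{eq:initL}.

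The control of the energy and momentum \eqref{eq:butEP1} is the genuinely new ingredient and I expect it to be the main obstacle. Starting from the balance laws \eqref{eq:en_grow} and \eqref{eq:momGrow}, the Sobolev embedding $H^1 \hookrightarrow L^{2(\sigma_1+\sigma_2+1)}$---whose validity requires precisely $\sigma_2 \leq \sigma_{2,max}$---yields
\begin{equation*}
|E[\psi(t)]| \leq |E[\psi_0]| + C\eta \int_0^t \|\psi(s)\|_{H^1}^{2(\sigma_1+\sigma_2+1)}\,ds,
\end{equation*}
and an analogous inequality for $|P|$. Using $\|\psi(s)\|_{H^1}^2 \sim \lambda^{2(s_c-1)}(s)$ from the decomposition \eqref{eq:deco1} and the bound \eqref{eq:self-similare_speed}, the right-hand side can be rewritten as an integral in $\lambda$ with an explicit power-law behaviour dictated by $\sigma_1,\sigma_2$. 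Multiplying by $\lambda^{2-2s_c}$, exploiting the smallness $\eta \leq s_c^3$ from \eqref{eq:initEta} together with \eqref{eq:initEP} for the initial contribution, and using that $T_{max} \sim \lambda_0^2$ from the previous step, one shows the right-hand side remains below $\Gamma_b^{3-10\nu}$ on $[0,T_{max})$. The condition $\sigma_*<\sigma_2$ is used here in a subtle way to guarantee integrability of the damping contribution in $\lambda$ near $\lambda=0$.

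Finally, to close \eqref{eq:butHs1}, I would apply the fractional derivative $|\nabla|^s$ to equation \eqref{eq:xi1}, pair it with $|\nabla|^s\xi$, and control the resulting terms by a mixture of Strichartz estimates (Theorem \ref{thm:strichartz}) and the interpolation \eqref{eq:nnlin} between $\dot H^s$ and $\dot H^1$, using that $s$ is chosen in the range \eqref{eq:s_cond_in}. The novel contribution compared with \cite{MeRaSz10} is the forcing term $\ii\eta\lambda^{2-2\sigma_2/\sigma_1}|Q_b|^{2\sigma_2}Q_b$, whose $\dot H^s$ contribution is absorbed by the smallness of $\eta\lambda^{2-2\sigma_2/\sigma_1}$ together with the exponential decay \eqref{eq:polEst} of $Q_b$. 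A Grönwall argument on the resulting differential inequality then yields the strictly improved rate $\Gamma_b^{1-45\nu}$ required by \eqref{eq:butHs1}, closing the bootstrap.
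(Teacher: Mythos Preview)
Your five-step outline matches the paper's structure and uses the same ingredients (the virial lower bound \eqref{eq:locvi}, the Lyapunov functional $J$ via \eqref{eq:second_monotonicity} and \eqref{eq:j-k}, the modulation law \eqref{eq:bLest} for $\lambda$, the balance laws \eqref{eq:en_grow}--\eqref{eq:momGrow} for $E$ and $P$, and Strichartz for $\dot H^s$). Two points deserve correction.

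First, for \eqref{eq:butHs1} the paper does \emph{not} pair $|\nabla|^s$ of \eqref{eq:xi1} with $|\nabla|^s\xi$ and run a Gr\"onwall argument. That route is obstructed by the transport term $\ii b\Lambda\xi = \ii b(y\cdot\nabla\xi + \xi/\sigma_1)$ in the rescaled equation, whose $\dot H^s$ estimate loses a weight in $y$. Instead, the paper reverts to the \emph{original} variables, writes $\hat\xi = \psi - \hat Q$, applies Duhamel to $\ii\partial_t\hat\xi + \Delta\hat\xi = -\mathcal E(\hat Q) - N_1(\hat\xi) - \ii\eta N_2(\hat\xi)$, and bounds $\||\nabla|^s\hat\xi\|_{L^\infty_t L^2_x}$ by Strichartz (Theorem~\ref{thm:strichartz}), using the nonlinear estimate \eqref{eq:nonlinearities_finale}, the already-improved bound \eqref{eq:butH11}, and the explicit time integral \eqref{eq:lamint}. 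No Gr\"onwall is involved: the bootstrap hypothesis \eqref{eq:conHs} is fed directly into the Strichartz inhomogeneity.

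Second, a minor misattribution: the lower bound $\sigma_2>\sigma_*$ is not what makes the energy/momentum integral converge in step~4. It is used earlier, in \eqref{eq:lowerS}, to control $\|\xi\|_{L^{2\sigma_2+2}}$ in the $L^2$-flux estimate. In step~4 the paper simply uses the Sobolev embedding $H^1\hookrightarrow L^{2(\sigma_1+\sigma_2+1)}$ (this is where $\sigma_2\leq\sigma_{2,\max}$ enters), the scaling $\|Q_b+\xi\|_{H^1}\lesssim 1$, the formula $\int_0^t\lambda^\alpha\,dv\lesssim\lambda^{\alpha+2}$, and then $\eta\leq s_c^3$ to close \eqref{eq:butEP1}. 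Also, for steps~1--2 the paper proceeds by a contradiction argument with explicit entry/exit levels $\Gamma_b^{1-7\nu}$ and $\Gamma_b^{1-10\nu}$ rather than the integrated inequality you wrote, though the underlying mechanism is the same.
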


\begin{proof}
We choose $s_c^*$ to be the minimum of all the conditions for $s_c$ found in previous sections. Then we fix $s_c < s_c^*$ and we choose $s$ such that
\begin{equation*}
 s_c < s < \min \left(\frac{d\sigma_1}{2\sigma_1 + 2}, \frac{d\sigma_2}{2\sigma_2 + 2}, \frac{1}{2} \right)
\end{equation*}
 such that \eqref{eq:lowerS}, \eqref{eq:nnlin} and \eqref{eq:momest} are true. Now we choose an initial condition $\psi_0 \in \mathcal O$, where the set $\mathcal O$ is defined in \ref{def:deco}. Consequently, there exists a time interval $[0,T_1]$ where the estimates in Proposition \ref{prp:deco} are true. In particular, by choosing $s_c$ small enough we obtain that $b(t)$ is small for any $t \in [0,T_1]$ and also that $\Gamma_{b(t)} < 1$. \newline
We start by proving inequality \eqref{eq:butH11}. We proceed by contradiction. First, suppose that there exists $\tau_0 \in [0, \tau^{-1}(T_1))$ such that 
\begin{equation*}
 \int |\nabla \xi(\tau_0)|^2 + |\xi(\tau_0)|^2 e^{-|y|}\, dy> \Gamma_{b(\tau_0)}^{1 - 10 \nu}.
\end{equation*}
Notice that the initial condition satisfies \eqref{eq:initHs}, and in particular, since $\Gamma_b < 1$, for any $t \in [0,T_1)$, see \eqref{eq:conb}, we have
\begin{equation*}
 \int |\nabla \xi(0)|^2 + |\xi(0)|^2 e^{-|y|}\, dy < \Gamma_{b(0)}^{1 - 7 \nu},
\end{equation*}
Then by continuity of $\xi$, $b$ and $\Gamma_b$, there exists a time interval $[\tau_1, \tau_2] \subset [0, \tau_0)$ such that 
\begin{equation} \label{eq:s3_s4}
\begin{aligned}
 \int |\nabla \xi(\tau_1)|^2 + |\xi(\tau_1)|^2 e^{-|y|}\, dy= \Gamma_{b(\tau_1)}^{1 - 7 \nu}, \\ 
 \int |\nabla \xi(\tau_2)|^2 +|\xi(\tau_2)|^2 e^{-|y|}\, dy= \Gamma_{b(\tau_2)}^{1 - 10 \nu},
\end{aligned}
\end{equation}
and for any $\tau \in [\tau_1,\tau_2]$ 
\begin{equation} \label{eq:proof_1}
\int |\nabla \xi(\tau)|^2 + |\xi(\tau)|^2 e^{-|y|}\, dy\geq \Gamma_{b(\tau)}^{1 - 7\nu}. 
\end{equation} 
The virial estimate \eqref{eq:locvi}, controls \eqref{eq:conb}, \eqref{eq:conL} and \eqref{eq:conEP} imply that for any $\tau \in [\tau_1,\tau_2]$, we have
\begin{equation*}
\begin{split}
\dot{b} & \gtrsim s_c +\int |\nabla \xi(\tau)|^2 + |\xi(\tau)|^2 e^{-|y|}\, dy- \Gamma_b^{1 - \nu^6} \gtrsim \Gamma_b^{1 + \nu^2} + \Gamma_{b}^{1 - 7\nu} - \Gamma_b^{1 - \nu^6} > 0,
\end{split}
\end{equation*}
for $\nu$ small enough and hence
\begin{equation} \label{eq:monotonicity_b}
b(\tau_2) \geq b(\tau_1).
\end{equation}
We notice that the definition of $\Gamma_b$ \eqref{eq:gammaB1} implies also that
\begin{equation}\label{eq:monGamma}
 \Gamma_{b(\tau_2)} \geq \Gamma_{b(\tau_1)}.
\end{equation}
On the other hand, from the smallness of $\ZZ$ \eqref{eq:zetaH1}, we obtain the inequality 
\begin{equation}\label{eq:xitildeTOXI}
\int |\nabla \tilde{\xi}|^2 + |\tilde{\xi}|^2 e^{- |y|}\, dy\geq \frac{1}{2} \int |\nabla \xi|^2 + |\xi|^2 e^{-|y|} \, dy - \Gamma_b^{1 - \nu}
\end{equation}
which combined with the second monotonicity estimate \eqref{eq:second_monotonicity} and \eqref{eq:conb} yields 
\begin{displaymath}
\begin{aligned}
 \frac{d}{d\tau} J & \lesssim b s_c + \Gamma_{b}^{2a} \| \nabla \xi\|_{L^2}^2- b \Gamma_b - b \int |\nabla \tilde{\xi}(\tau)|^2 + |\tilde{\xi}(\tau)|^2 e^{-|y|}\, dy \\
 & \lesssim b\left(s_c - \Gamma_b - \frac{1}{2}\int |\nabla \xi|^2 + |\xi|^2 e^{-|y|}dy + \Gamma_b^{1 - \nu} + \frac{\Gamma_{b}^{2a}}{b} \| \nabla \xi\|_{L^2}^2 \right) \\
 & \lesssim b \left(\Gamma_b^{1 - \nu^2} - \Gamma_b - \Gamma_b^{1 - 7\nu} + \Gamma_b^{1 - \nu} \right) \leq 0,
\end{aligned}
\end{displaymath} 
for $\nu$ small enough where we have chosen $a>0$ so that
\begin{equation*}
 \frac{\Gamma_{b}^{2a}}{b} \leq \frac{1}{4}. 
\end{equation*}
Thus we obtain that
\begin{equation} \label{eq:monotonicity_J}
J(\tau_2) \leq J(\tau_1).
\end{equation} 
Next, using \eqref{eq:j-k} and \eqref{eq:monotonicity_J} we get 
\begin{equation*}
 \begin{aligned}
 K(\tau_2) + \int |\xi(\tau_2)|^2 e^{-|y|} + |\nabla \xi(\tau_2)|^2\, dy- \Gamma_{b(\tau_2)} - s_c \lesssim J(\tau_2) \leq J(\tau_1) \\ \lesssim K(\tau_1) + (1 + A^2) \int |\xi(\tau_1)|^2 e^{-|y|} + |\nabla \xi(\tau_1)|^2\, dy+ \Gamma_{b(\tau_1)}^{1 - \nu}.
 \end{aligned}
\end{equation*}
Equivalently, by using \eqref{eq:s3_s4} we get 
\begin{equation*}
 \begin{aligned}
 \int |\xi(\tau_2)|^2 e^{-|y|} + |\nabla \xi(\tau_2)|^2\, dy &= \Gamma_{b(\tau_2)}^{1 - 10\nu} \\ & \lesssim K(\tau_1) - K(\tau_2) + s_c + \Gamma_{b(\tau_2)} 
 + (1 + A^2) \Gamma_{b(\tau_1)}^{1 - 7\nu} + \Gamma_{b(\tau_1)}^{1 - \nu}.
 \end{aligned}
\end{equation*}
On the right-hand side of the equation above, we use \eqref{eq:k-k} to get
\begin{equation*}
 \left| K(\tau_1) - K(\tau_2)\right| \lesssim s_c.
\end{equation*}
Thus using the inequality $s_c \leq \Gamma_{b(\tau_1)}^{1 - \nu^2}$ \eqref{eq:conb} and the definition of $A$ \eqref{eq:A}, we obtain 
\begin{equation*}
 \Gamma_{b(\tau_2)}^{1 - 10\nu} \lesssim \Gamma_{b(\tau_1)}^{1 - \nu^2} + \Gamma_{b(\tau_2)} + \Gamma_{b(\tau_1)}^{1 - 7\nu - 2a} + \Gamma_{b(\tau_1)}^{1 - \nu}. 
\end{equation*}
Now we suppose that $a \leq \frac{\nu}{2}$. This implies that there exists $C>0$ such that
\begin{equation*}
 \Gamma_{b(\tau_2)}^{1 - 10\nu} \leq C \Gamma_{b(\tau_1)}^{1 - 8\nu}.
\end{equation*}
By exploiting \eqref{eq:monGamma}, and for $b$ small enough, this implies that
\begin{equation*}
 \Gamma_{b(\tau_2)}^{1 - 10\nu} \leq C \Gamma_{b(\tau_1)}^{1 - 8\nu} \leq C \Gamma_{b(\tau_2)}^{1 - 8\nu} = C \Gamma_{b(\tau_2)}^{\nu} \Gamma_{b(\tau_2)}^{1 - 9\nu} \leq \Gamma_{b(\tau_2)}^{1 - 9\nu}, 
\end{equation*}
which is a contradiction because $\Gamma_{b(\tau_2)} < 1$. 
\newline
As our next step, we will prove \eqref{eq:butb1}. Assume by contradiction that there exists $\tau_0 \in [0, \tau^{-1}(T_1)]$ such that $s_c > \Gamma_{b(\tau_0)}^{1- \nu^4}.$ By continuity, this implies that there exists $\tau_1 < \tau_0$ such that 
\begin{equation*}
 s_c = \Gamma_{b(\tau_1)}^{1 - \nu^5}
\end{equation*}
and for any $\tau \in [\tau_1,\tau_0]$, 
\begin{equation*}
 s_c > \Gamma_{b(\tau)}^{1 - \nu^5}.
\end{equation*}
In particular, we have that 
\begin{equation*}
 \frac{d}{d\tau} \Gamma_{b(\tau_1)} < 0
\end{equation*}
which is equivalent to $ \dot{b}(\tau_1) < 0$ from the definition of $\Gamma_b$ in \eqref{eq:gammaB1}. On the other hand, the local virial inequality \eqref{eq:locvi} implies that
\begin{displaymath}
\dot{b}(\tau_1) \gtrsim s_c - \Gamma_{b(s_6)}^{1 - \nu^6} \gtrsim \Gamma_{b(s_6)}^{1 - \nu^5} - \Gamma_{b(s_6)}^{1 - \nu^6} > 0,
\end{displaymath}
which is a contradiction. \newline
Similarly, suppose by contradiction that there exists $\tau_0 \in [0, \tau^{-1}(T_1))$ such that $s_c < \Gamma_{b(\tau_0)}^{1 + \nu^4}.$ Then, by using \eqref{eq:j-b}, we get that
\begin{equation*}
 s_c \leq \Gamma_{\sqrt{\frac{J(\tau_0)}{d}}}^{1 + \nu^5}
\end{equation*}
Now let $\tau_1\in [0,\tau_0]$ be the largest time such that 
\begin{equation*}
 s_c = \Gamma_{\sqrt{\frac{J(\tau_1)}{d}}}^{1 + \nu^6}
\end{equation*}
Thus, by definition of $\tau_1$ and $\Gamma_b$ \eqref{eq:gammaB1} we obtain that 
\begin{equation*}
 \frac{d}{d\tau}J(\tau_1) \geq 0,
\end{equation*}
while from the monotonicity formula \eqref{eq:second_monotonicity} and \eqref{eq:xitildeTOXI} we obtain 
\begin{equation*}
\begin{aligned}
 \frac{d}{d\tau} J(\tau_1) &\lesssim b(\tau_1)\left(s_c - \Gamma_{b(\tau_1)} - \frac{1}{2}\int |\nabla \xi(\tau_1)|^2 + |\xi(\tau_1)|^2 e^{-|y|}dy + \Gamma_{b(\tau_1)}^{1 - \nu} + \frac{\Gamma_{b(\tau_1)}^{2a}}{b(\tau_1)} \| \nabla \xi(\tau_1)\|_{L^2}^2 \right) \\
 &\lesssim b(\tau_1) \left( \Gamma_{\sqrt{\frac{J(\tau_1)}{d}}}^{1 + \nu^6} - \Gamma_{\sqrt{\frac{J(\tau_1)}{d}}}^{1 + \nu^2}\right) \leq 0,
\end{aligned}
\end{equation*}
which is a contradiction. \newline
The next step will be to prove inequality \eqref{eq:butL1}. From the control \eqref{eq:butb1}, it follows that the parameter $b$ is dynamically trapped around $b_0$, that is, for any $t \in [0,T_1)$, we have the following upper and lower bounds
\begin{displaymath}
 \frac{1 - \nu^4}{1 + \nu^{10}} b_0 \leq b(t) \leq \frac{1 + \nu^4}{1 - \nu^{10}} b_0.
\end{displaymath}
Thus, we notice that the control on the parameter \eqref{eq:bLest}, and inequality \eqref{eq:butH1} imply that for any $ t \in [0,T_1)$
\begin{displaymath}
0 < \frac{1 - 2\nu^4}{1 + \nu^{10}} b_0 \leq - \frac{\dot{\lambda}}{\lambda} = - \frac{1}{2} \frac{d}{dt}\lambda^2 \leq \frac{1 + 2\nu^4}{1 - \nu^{10}} b_0,
\end{displaymath}
where we used that 
\begin{equation*}
 \frac{d}{d\tau} \lambda = \dot{\lambda} = \lambda^2 \frac{d}{dt} \lambda 
\end{equation*}
from \eqref{eq:scaled_t}.
Equivalently, we obtain the law of the parameter $\lambda$
\begin{equation} \label{eq:L_equation}
\lambda_0^2 - 2\left(\frac{1 + 2\nu^4}{1 - \nu^{10}} \right) b_0 t \leq \lambda^2(t) \leq \lambda_0^2 - 2\left(\frac{1 - 2\nu^4}{1 + \nu^{10}} \right) b_0 t.
\end{equation}
in particular, $\lambda(t)$ is a non-increasing function of time. This estimate and the dynamical trapping of the parameter $b$ \eqref{eq:butb1} imply inequality \eqref{eq:butL1}. \newline
The next step is to obtain the bound on the total energy $E$ and momentum $P$ \eqref{eq:butEP1}. We start by deriving a suitable bound for the energy functional. We use \eqref{eq:en_grow} to obtain that
\begin{equation*}
\begin{aligned}
 E[\psi(t)] &= E[\psi_0] + \eta \int_0^t \int |\psi|^{2(\sigma_1 + \sigma_2+1)} - |\psi|^{2\sigma_2} |\nabla \psi|^2 - 2\sigma_2 |\psi|^{2\sigma_2 - 2} Re \left( \bar{\psi}\nabla \psi \right)^2 \, dx \, dv \\
 &\leq E[\psi_0] + \eta \int_0^t \lambda^{2s_c - 2\frac{\sigma_2}{\sigma_1} -2} \int |Q_b + \xi|^{2(\sigma_1 + \sigma_2 +1)}\, dy \, dv.
\end{aligned}
\end{equation*}
We notice that if $d\leq3$ and $ \sigma_2 \leq \sigma_1 < \frac{1}{(d -2)^+} $ or if $d \geq 4$, and $\sigma_2 < \sigma^*$, then $H^1(\R^d) \hookrightarrow L^{2(\sigma_1 + \sigma_2 +1)}(\R^d)$. 
Thus, for $s_c$ small enough we can use the Jensen inequality, \eqref{eq:butH1}, \eqref{eq:conHs} and interpolation 
\begin{equation*}
 \begin{aligned}
 \| \xi \|_{L^{2(\sigma_1 + \sigma_2 +1)}}^{2(\sigma_1 + \sigma_2 +1)} \lesssim \| \xi \|_{\dot H^1}^{2\theta(\sigma_1 + \sigma_2 +1)} \| \xi \|_{\dot H^s}^{2(1 - \theta)(\sigma_1 + \sigma_2 +1)} \lesssim 1
 \end{aligned}
\end{equation*}
for some $\theta(\sigma_1,\sigma_2,s) \in (0,1)$, to obtain 
\begin{equation*}
 \| Q_b + \xi \|_{L^{2(\sigma_1 + \sigma_2 +1)}}^{2(\sigma_1 + \sigma_2 +1)} \leq C \left( \| Q_b\|_{L^{2(\sigma_1 + \sigma_2 +1)}}^{2(\sigma_1 + \sigma_2 +1)} + \| \xi \|_{L^{2(\sigma_1 + \sigma_2 +1)}}^{2(\sigma_1 + \sigma_2 +1)} \right) \leq C
\end{equation*}
where $C = C(\sigma_1,\sigma_2,b^*)>0$. If follows that 
\begin{equation}\label{eq:enDerBu1}
	\begin{split}
			\lambda^{2 - 2s_c} E[\psi(t)] &\leq \lambda^{2 - 2s_c} E[\psi_0] + C \eta \lambda^{2 - 2s_c}\int_0^t \lambda^{2s_c - 2\frac{\sigma_2}{\sigma_1} -2}\, dv.
	\end{split}
\end{equation} 
Now we observe that, from \eqref{eq:L_equation}, there exists a constant $0< c=c(\nu, b_0)\ll1$ such that, for any $t \in [0,T_1)$, 
\begin{equation*}
	\lambda_0^2 - 2(1 + c)t \leq \lambda^2(t) \leq \lambda_0^2 - 2(1 - c) t.
\end{equation*} 
Integrating $\lambda$ in time, it is straightforward to see that for any $\alpha \in \R$, we have
\begin{equation}\label{eq:lamint}
\int_0^t \lambda(\tau)^\alpha\, dv \lesssim \lambda(t)^{\alpha + 2}.
\end{equation}
Thus, from \eqref{eq:enDerBu1}, it follows that
\begin{equation}\label{eq:enDerbu2}
\begin{aligned}
 \lambda^{2 - 2s_c} E[\psi(t)] &\leq \lambda^{2 - 2s_c} E[\psi_0] + C \eta \lambda^{2 -2\frac{\sigma_2}{\sigma_1}}.
\end{aligned}
\end{equation}
From \eqref{eq:butL} and \eqref{eq:initEP} we have
\begin{equation*}
 \lambda^{2 - 2s_c} \left| E[\psi_0] \right| \leq \Gamma_b^{50}.
\end{equation*}
Moreover, we bound the last term using the smallness of $\eta$ \eqref{eq:initEta}, \eqref{eq:butb} and \eqref{eq:butb} 
\begin{equation*}
 C \eta \lambda^{2 -2\frac{\sigma_2}{\sigma_1}} \leq Cs_c^3 \leq \Gamma_b^{3 - 9 \nu}
\end{equation*}
for $\nu$ small enough. 
This implies the bootstrapped control on the energy
\begin{equation*}
		\lambda^{2 - 2s_c} |E[\psi(t)]| \leq \Gamma_b^{3 - 10\nu}.
\end{equation*}
We use the same procedure to bound the momentum functional $P$. From \eqref{eq:momGrow}, we have
\begin{equation*}
\begin{aligned}
 P[\psi(t)] & = P[\psi_0] + 2\eta \int_0^t \int |\psi|^{2\sigma_2} Im ( \bar{\psi} \nabla \psi)\, dx \, d\tau \\
 & = P[\psi_0] + 2\eta \int_0^t \lambda^{-2\frac{\sigma_2}{\sigma_1} + 2s_c -1} \| Q_b + \xi\|_{L^{4\sigma_2 + 2}}^{2\sigma_2 + 1} \| \nabla (Q_b + \xi)\|_{L^2}\, dv.
\end{aligned}
\end{equation*}
Again, if $d\leq3$, $s_c$ is small enough and $ \sigma_2 \leq \sigma_1$ or if $d \geq 4$, and $\sigma_2 < \sigma^*$, then we use use the Jensen inequality, \eqref{eq:butH1}, \eqref{eq:conHs}, interpolation and \eqref{eq:lamint} to prove that there exists $C = C(\sigma_2, b^*)>0$ such that 
\begin{equation}\label{eq:momDerbu2}
\begin{aligned}
 \lambda^{1 - 2s_c} P[\psi(t)] \leq \lambda^{1 - 2s_c} P[\psi_0] + C \eta \lambda^{2 - 2\frac{\sigma_2}{\sigma_1}}.
\end{aligned}
\end{equation}
Consequently, \eqref{eq:initEP}, \eqref{eq:initEta} and \eqref{eq:butb} imply that
\begin{equation*}
 \lambda^{1 - 2s_c} P[\psi] \leq \Gamma_b^{3 - 10\nu},
\end{equation*}
for $\nu$ small enough.
This concludes the proof of \eqref{eq:butEP}. \newline
The last step is to obtain the $\dot{H}^s$-norm control \eqref{eq:butHs1}. We define
\begin{displaymath}
\begin{aligned}
 &\hat{Q}(t,x) = \lambda^{-\frac{1}{\sigma_1}} Q_b\left(\frac{x - x(t)}{\lambda}\right) e^{\ii \gamma}, \\ 
 &\hat{\xi}(t,x) = \lambda^{-\frac{1}{\sigma_1}} \xi\left(t,\frac{x - x(t)}{\lambda}\right) e^{\ii \gamma},
\end{aligned}
\end{displaymath}
we decompose the solution as 
\begin{equation*}
 \psi = \hat{Q}+ \hat{\xi}.
\end{equation*}
From \eqref{eq:main}, we see that the function $\hat{\xi}$ satisfies the equation
\begin{equation}\label{eq:main_tilde}
\ii\hat{\xi}_t + \Delta \hat{\xi} = - \mathcal{E}(\hat{Q}) -N_1(\hat{\xi}) - \ii \eta N_2(\hat{\xi}) , 
\end{equation}
where $\mathcal E$ does not depend on $\hat{\xi}$ 
\begin{align*}
& \mathcal{E}(\hat{Q}) = \ii \partial_t \hat{Q}+ \Delta \hat{Q}+ |\hat{Q}|^{2\sigma_1} \hat{Q}+ \ii \eta |\hat{Q}|^{2\sigma_2}\tilde{Q}
\end{align*}
and 
\begin{align*}
N_1(\hat{\xi}) = |\hat{Q}+ \hat{\xi}|^{2\sigma_1}( \hat{Q}+ \hat{\xi}) - |\hat{Q}|^{2\sigma_1} \hat{Q}, 
\end{align*}
\begin{equation*}
	 N_2(\hat{\xi}) = |\hat{Q}+ \hat{\xi}|^{2\sigma_2}(\hat{Q}+ \hat{\xi}) - |\hat{Q}|^{2\sigma_2}\hat{Q}.
\end{equation*}
Using the equation satisfied by $Q_b$ \eqref{eq:Qb}, we obtain that
\begin{align*}
 \mathcal{E}(\hat{Q}) &= \frac{1}{\lambda^{2 + \frac{1}{\sigma_1}}} e^{\ii \gamma} \left( -\Psi_b + \ii (\dot{b} - \beta s_c) \partial_b Q_b -\ii\left(\frac{\dot{\lambda}}{\lambda} + b \right) \Lambda Q_b -\ii\frac{\dot{x}}{\lambda} \cdot \nabla Q_b\right. \\ &- (\dot{\gamma} -1) Q_b 
+ \left. \ii \eta \lambda^{2-\frac{2\sigma_2}{\sigma_1}} |Q_{b}|^{2\sigma_2}Q_{b} \right).
\end{align*}
We notice that inequality \eqref{eq:butHs1} is equivalent to
\begin{equation}\label{eq:butHs_2}
\int ||\nabla|^{s} \hat{\xi}|^2 \, dx \leq {\lambda^{2(s_c - s)}} \Gamma_b^{1 - 45\nu}
\end{equation}
since 
\begin{equation*}
 \int ||\nabla|^{s} \hat{\xi}|^2\, dx = {\lambda^{2(s_c - s)}} \int ||\nabla|^{s} \xi|^2 dy,
\end{equation*}
and thus we will now prove \eqref{eq:butHs_2}. 
We define for $j=1,2$ where
\begin{equation}\label{eq:strichartz_pair}
r_{j} = \frac{d(2\sigma_{j}+ 2)}{d + 2s\sigma_j}, \quad \gamma_j = \frac{4(\sigma_j+ 1)}{\sigma_j(d - 2s)}, \quad \frac{2}{\gamma_j} = \frac{d}{2} - \frac{d}{r_j}.
\end{equation}
We notice that $(\gamma_j,r_j)$ are Strichartz admissible pairs (see Definition \ref{def:strich}). 
Now we write equation \eqref{eq:main_tilde} in the integral form
\begin{equation*}
 \hat{\xi}(t) = e^{\\i \Delta t} \hat{\xi}(0) + \int_0^t e^{\ii (t-\tau) \Delta} \left(\mathcal{E} (Q_b) + N_1(\hat{\xi}) + \ii \eta N_2(\hat{\xi}) \right) d\tau.
\end{equation*}
and we use the Strichartz estimates in Theorem \ref{thm:strichartz} to obtain
\begin{equation}\label{eq:Hs_strichartz}
 \begin{aligned}
 \| |\nabla|^{s} \hat{\xi}\|_{L^\infty([0,T_1],L^2)} & \lesssim \| |\nabla|^{s} \hat{\xi}_0 \|_{L^2} + \| |\nabla|^{s} \mathcal{E} \|_{L^1([0,T_1],L^2)} \\ &+ \| |\nabla|^{s} N_1(\hat{\xi}) \|_{L^{\gamma_1}[0,T_1],L^{r_1}}
 + \eta \| |\nabla|^{s}N_2(\hat{\xi}) \|_{L^{\gamma_2}([0,t],L^{r_2})}.
 \end{aligned}
\end{equation}
From the initial bound \eqref{eq:initHs}, we have 
\begin{equation*}
 \lambda^{2(s - s_c)} \int ||\nabla|^{s} \hat{\xi}(0)|^2dx = \int ||\nabla|^{s} \xi_0|^2dy \leq \Gamma_{b_0}^{1 - \nu},
\end{equation*}
and thus the bootstrapped controls on b, see \eqref{eq:butb} and $\lambda$, see \eqref{eq:butL} imply
\begin{equation}\label{eq:i_c_Hs}
\| |\nabla|^{s} \hat{\xi}_0 \|_{L^2}^2 \leq \Gamma_{b_0}^{1 - \nu}{ \lambda^{2(s_c - s)}}.
\end{equation} 
We claim that the remaining terms in \eqref{eq:Hs_strichartz} are bounded as
\begin{equation}\label{eq:big_epsilon}
\| |\nabla|^{s} \mathcal{E} \|_{L^1([0,T_1],L^2)}^2 \leq \Gamma_{b}^{1 - 15 \nu}\lambda^{2(s_c - s)},
\end{equation}
and
\begin{equation}\label{eq:nonlinearities_Hs}
\| |\nabla|^{s} N_1(\hat{\xi}) \|_{L^{\gamma_1}([0,T_1],L^{r_1})} + \eta \| |\nabla|^{s}N_2(\hat{\xi}) \|_{L^{\gamma_2}([0,T_1],L^{r_2})} \leq \Gamma_{b}^{\frac{1}{2}(1 - 41 \nu)} \lambda^{s_c-s}.
\end{equation}
Notice that \eqref{eq:big_epsilon} and the bound on $N_1(\hat{\xi}) $ in \eqref{eq:nonlinearities_Hs} has been already proven in \cite[Section $4$]{MeRaSz10}, up to the term coming from the damping in $\mathcal{E}$. In particular, from the estimates on the parameters \eqref{eq:gxest}, \eqref{eq:bLest}, the bound on $\Psi_b$ \eqref{eq:psiB}, and the bootstrap bounds for $b$, $\lambda$ and $\xi$, see \eqref{eq:butb}, \eqref{eq:butL} and \eqref{eq:butH1} respectively, and from the smallness condition on $\eta$ in \eqref{eq:initEta} there holds for any $t \in [0,T_1)$ 
\begin{equation}\label{eq:big_E_bound}
	\begin{split}
		\| |\nabla|^{s} \mathcal{E}(\hat{Q}) \|_{L^2} & \lesssim \frac{1}{\lambda^{2 + s - s_c}} \left( \int |\nabla \xi|^2\, dy+ \int |\xi|^2 e^{-|y|}\, dy+ \Gamma_b^{1 - 11\nu} + \eta\lambda^{2 - 2\sigma_2/\sigma_1} \right)^\frac{1}{2} \\
		& \lesssim \frac{\Gamma_{b}^{\frac{1}{2}(1 - 12 \nu)}}{ \lambda^{2 + s - s_c}}.
	\end{split}
\end{equation}
We use inequality \eqref{eq:lamint} to integrate \eqref{eq:big_E_bound} in time which yields 
\begin{equation*}
\int_0^t \| |\nabla|^{s} \mathcal{E}(\hat{Q}) \|_{L^2} d\tau \lesssim \int_0^t \frac{\Gamma_{b_0}^{\frac{1}{2}(1 - 12 \nu)}}{ \lambda^{2 + (s - s_c)}} d\tau \lesssim \frac{\Gamma_{b_0}^{\frac{1}{2}(1 - 12 \nu)}}{ \lambda^{s - s_c}}. 
\end{equation*}
Lastly, we will show inequality \eqref{eq:nonlinearities_Hs}. The main ingredient is the inequality 
\begin{equation}\label{eq:nonlinearities_finale}
 \| |\nabla|^{s} N_j(\hat{\xi}) \|_{L^{r'_j}} \lesssim \lambda^{(2\sigma_j +1)(s_c - s + 2/\gamma_j)} \| |\nabla|^{s + 2/\gamma_j } \xi \|_{L^2}
\end{equation}
for any $j= 1,2$ which has been proven in \cite[Appendix]{MeRaSz10} and will be shown in Appendix $C$. 
Notice that since
\begin{equation*}
 s + 2/\gamma_j = s + \frac{(d-2s_c)\sigma_j}{2\sigma_j +2},
\end{equation*}
 for any $j=1,2$, then if we choose $s$ to be close enough to $s_c$, we have
$s < s + 2/\gamma_j < 1$. In particular, we can interpolate and use \eqref{eq:butH1} and \eqref{eq:conHs} to obtain 
\begin{displaymath}
\| |\nabla|^{s + 2/\gamma_j} \xi \|_{L^2}^2 \lesssim \| |\nabla|^{s} \xi\|_{L^2}^{2\theta_j} \| \nabla \xi\|_{L^2}^{2(1 - \theta_j)} \leq \Gamma_b^{ 1 -(10 +40 \theta_j) \nu}
\end{displaymath}
where 
\begin{equation*}
 \theta_j = \frac{1 - s - 2/\gamma_j}{1 - s}.
\end{equation*}
For $j=1$, we choose $s_c\ll 1$ small enough and $s$ close enough to $s_c$ to have
\begin{displaymath}
\| |\nabla|^{s + 2/\gamma_j} \xi \|_{L^2} \lesssim \Gamma_{b_0}^{\frac{1}{2} (1 - 40 \nu)}. 
\end{displaymath}
 It follows that
\begin{equation*}
 \| |\nabla|^{s} N_1(\hat{\xi}) \|_{L^{\gamma_1}[0,T_1],L^{r_1}} \lesssim \Gamma_{b_0}^{\frac{1}{2} (1 - 40 \nu)} \left( \int \lambda^{\gamma_1'(2\sigma_1 +1)(s_c - s + 2/\gamma_j)} \right)^\frac{1}{\gamma_1'}.
\end{equation*}
Since
\begin{equation*}
 \gamma_1' (2\sigma_1 + 1) (s + 2/\gamma_1) = 2 + \gamma_1' (s - s_c),
\end{equation*}
we can use \eqref{eq:lamint} to integrate in time and obtain the first inequality in \eqref{eq:nonlinearities_Hs}
\begin{displaymath}
\| |\nabla|^{s} N_1(\hat{\xi}) \|_{L^{\gamma_1}[0,T_1],L^{r_1}} \lesssim \frac{\Gamma_{b_0}^{\frac{1}{2}(1 - 41 \nu)}}{ \lambda^{(s - s_c)}}.
\end{displaymath}
Moreover, we make the same computations for $j =2$ and use the control on $\eta$ \eqref{eq:initEta} and \eqref{eq:butb} to conclude that
\begin{equation}\label{eq:etaSmall}
 \eta \| |\nabla|^{s}N_2(\hat{\xi}) \|_{L^{\gamma_2}([0,T_1],L^{r_2})} \leq \eta \Gamma_b^{ 1 -(10 +40 \theta_2) \nu} \lambda^{s_c-s} \leq \Gamma_b^{ 2 -50 \nu} \lambda^{s_c-s}.
\end{equation}
\end{proof}

Thus Proposition \ref{thm:boot} has been proven for any $t \in [0,T_1)$. Consequently, we can extend by continuity the time interval of the self-similar regime, that is the time interval where the bounds in Proposition \ref{prp:deco} are true. Recursively, we extend the self-similar regime to the whole time interval $[0, T_{max})$ where $T_{max}(\psi_0)$ is the maximal time of existence of the solution stemming from $\psi_0$. Now we show that $\psi$ experiences a collapse in finite time.

\begin{cor}
There exists a time $T_{max}<\infty$ such that
\begin{displaymath}
\lim_{t \rightarrow T_{max}} \lambda(t) = 0.
\end{displaymath}
Moreover, there exists $x_\infty \in \R^d$ such that 
\begin{displaymath}
\lim_{t \rightarrow T_{max}} x(t) \rightarrow x_\infty,
\end{displaymath}
where $x(t)$ is defined in \eqref{eq:y}.
\end{cor}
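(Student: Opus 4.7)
The statement splits naturally into two claims: a blow-up-time and modulus-size statement for $\lambda$, and a convergence statement for the translation parameter $x$. Both are now essentially corollaries of the bootstrap output, but the second requires a careful passage from the scaled time $\tau$ to physical time $t$.

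The plan for the first claim is to read off the behavior of $\lambda$ from the extended bootstrap law. By Proposition \ref{thm:boot1} and the continuity argument that follows it, the inequality \eqref{eq:L_equation} holds on the whole interval $[0,T_{max})$. Its upper bound,
\begin{equation*}
 \lambda^{2}(t)\leq \lambda_{0}^{2}-2\Big(\frac{1-2\nu^{4}}{1+\nu^{10}}\Big)b_{0}\,t,
\end{equation*}
forces the right-hand side to vanish at the explicit finite time $t^{*}=\lambda_{0}^{2}(1+\nu^{10})/(2(1-2\nu^{4})b_{0})$. Since the decomposition \eqref{eq:deco1} requires $\lambda(t)>0$ throughout $[0,T_{max})$, we must have $T_{max}\leq t^{*}<\infty$. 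The same law shows $\lambda$ is monotone decreasing on $[0,T_{max})$, hence admits a limit $\ell\geq 0$ as $t\to T_{max}$. If $\ell>0$, then since the bootstrap keeps $b(t)$ bounded and $\|\nabla\xi(t)\|_{L^{2}}$ small by \eqref{eq:butH1}, the factor $\|\nabla(Q_{b(t)}+\xi(t))\|_{L^{2}}$ is uniformly bounded, and consequently $\|\nabla\psi(t)\|_{L^{2}}^{2}=\lambda^{2(s_{c}-1)}\|\nabla(Q_{b(t)}+\xi(t))\|_{L^{2}}^{2}$ would remain bounded, contradicting the blow-up alternative of Theorem \ref{thm:locEx}. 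Hence $\ell=0$, giving $\lambda(t)\to 0$ as $t\to T_{max}$.

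For the second claim I would use the modulation estimate \eqref{eq:gxest} for $\dot x/\lambda$ in scaled time. Combining it with \eqref{eq:butH1} and the fact that $\Gamma_{b(\tau)}$ stays uniformly bounded via \eqref{eq:butb}, we get
\begin{equation*}
 \Big|\frac{\dot{x}(\tau)}{\lambda(\tau)}\Big|\lesssim \delta_{2}\,\Gamma_{b(\tau)}^{(1-10\nu)/2}+\Gamma_{b(\tau)}^{1-20\nu}\lesssim 1.
\end{equation*}
Using the scaling $d\tau/dt=1/\lambda^{2}$, this converts into the physical-time bound
\begin{equation*}
 \Big|\frac{dx}{dt}(t)\Big|=\frac{|\dot{x}(\tau(t))|}{\lambda^{2}(t)}\lesssim \frac{1}{\lambda(t)}.
\end{equation*}
The lower bound in \eqref{eq:L_equation} yields $\lambda(t)\gtrsim (T_{max}-t)^{1/2}$ near $T_{max}$, so $1/\lambda\in L^{1}((0,T_{max}))$. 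For $0<t_{1}<t_{2}<T_{max}$ we therefore have $|x(t_{2})-x(t_{1})|\lesssim \int_{t_{1}}^{t_{2}}\lambda^{-1}(s)\,ds\to 0$ as $t_{1},t_{2}\to T_{max}$, so by Cauchy's criterion $x(t)$ converges to some $x_{\infty}\in\R^{d}$.

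The only subtle point is the passage from the scaled-time modulation bound on $\dot x/\lambda$ to a physical-time bound that is integrable up to $T_{max}$; once the self-similar law $\lambda^{2}\sim T_{max}-t$ is in hand, the square-root integrable singularity of $1/\lambda$ makes the conclusion for $x(t)$ automatic, and the finite-time vanishing of $\lambda$ is immediate from the explicit affine upper bound on $\lambda^{2}$ combined with the blow-up alternative of Theorem \ref{thm:locEx}.
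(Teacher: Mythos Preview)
Your proof is correct and follows essentially the same route as the paper: both extract the finite blow-up time from the affine law \eqref{eq:L_equation} for $\lambda^2$, and both obtain the convergence of $x(t)$ by converting the scaled-time bound \eqref{eq:gxest} into the physical-time estimate $|dx/dt|\lesssim 1/\lambda$ and integrating the square-root singularity. The only cosmetic difference is that you invoke the blow-up alternative to force $\ell=0$, whereas the paper reads off $\lambda\to 0$ directly from the upper bound and then checks that $\|\nabla\psi\|_{L^2}\to\infty$ to identify this time with $T_{max}$; also, your reference to ``the lower bound in \eqref{eq:L_equation}'' for $\lambda(t)\gtrsim (T_{max}-t)^{1/2}$ is really using the underlying differential inequality $-\tfrac{d}{dt}\lambda^2\leq 2(1+c)b_0$ integrated from $t$ to $T_{max}$, not the integrated form from $0$ as stated --- but the conclusion is the same.
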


\begin{proof}
Let $\psi_0 \in \mathcal{O}$ where $\mathcal O$ is defined in \ref{def:deco} and let $T_{max} \leq \infty$ be the maximal time of existence of the corresponding solution to \eqref{eq:main}. Then Proposition \ref{thm:boot} implies that for any $t\in [0,T_{max})$, the bounds on the scaling parameter in \eqref{eq:L_equation} are true, namely, there exists $0< c=c(\nu, b_0)\ll1$ such that
\begin{equation}\label{eq:lambEq}
	\lambda_0^2 - 2(1 + c)t \leq \lambda^2(t) \leq \lambda_0^2 - 2(1 - c) t.
\end{equation}
Consequently, there exists a time $0<T=T(\lambda_0,\nu,b_0) <\infty $ such that $\lim_{t \to T} \lambda(t) = 0$. Furthermore, by using decomposition \eqref{eq:deco1} we also obtain that 
\begin{equation*}
 \lim_{t \to T} \| \nabla \psi(t)\|_{L^2}^2 = \lim_{t \to T} \lambda^{2(s_c - 1)}(t) \| \nabla (Q_{b(t)} + \xi(t))\|_{L^2}^2 = \infty.
\end{equation*}
Indeed, in the limit above, the exponent $2(s_c - 1)$ is negative because $s_c <1$, and for any $t \in [0,T]$, $Q_{b(t)} \in H^1(\R^d)$,
\begin{equation*}
 \big\| \nabla Q_{b(t)} \big\|_{L^2} \geq C >0,
\end{equation*}
while \eqref{eq:butb1} and \eqref{eq:butH11} imply that 
\begin{equation*}
 \big\| \nabla \xi(t) \big\|_{L^2}^2 \leq \Gamma_{b}^{1 - 10\nu} \lesssim s_c^\frac{1 - 10\nu}{1 + \nu^4}.
\end{equation*}
in particular, the maximal time of existence is given by $T < \infty$. Finally, we prove the convergence to a blow-up point. By exploiting \eqref{eq:gxest} and \eqref{eq:lambEq}, we get
\begin{displaymath}
\begin{aligned}
 &\big|x(T) - x(t)\big| = \left|\int_t^{T} \frac{dx}{ds} ds \right| \leq \int_t^{T} \frac{1}{\lambda} \left| \frac{1}{\lambda} \frac{d}{d\tau} x \right| ds \\
 & \lesssim \left\|\left( \int |\nabla \xi(t)|^2 + |\xi(t)|^2 e^{-|y|}\, dy\right)^\frac{1}{2} + \Gamma_{b(t)}^{1 -20\nu} \right\|_{L^\infty([0,T])} \int_t^{T} \left(\lambda_0^2 - 2(1 +c) s\right)^{-\frac{1}{2}} ds.
\end{aligned}
\end{displaymath}
We use \eqref{eq:butb} and \eqref{eq:butH1} to obtain
\begin{equation*}
 \left\|\left( \int |\nabla \xi(t)|^2 + |\xi(t)|^2 e^{-|y|}\, dy\right)^\frac{1}{2} + \Gamma_{b(t)}^{1 -20\nu} \right\|_{L^\infty([0,T])} \lesssim 1.
\end{equation*}
Moreover, we compute the integral as follows
\begin{equation*}
 \int_t^{T} \left(\lambda_0^2 - 2(1 +c) s\right)^{-\frac{1}{2}} ds = -\frac{2}{2 + c} (\lambda_0^2 - 2(1 +c) s)^{\frac{1}{2}}|_t^T.
\end{equation*}
 This implies that 
\begin{equation*}
 \lim_{t \to T} \big|x(T) - x(t)\big| \lesssim \lim_{t \to T} (\lambda_0^2 - 2(1 +c) s)^{\frac{1}{2}}|_t^T = 0.
\end{equation*}
\end{proof}

\section{The case $\sigma_2 < \sigma_1$} \label{sec:concl}

In this section, we discuss the case where the exponent of the damping term is strictly smaller than that of the power-type nonlinearity. \newline
First, we observe that the condition which we have chosen for the damping parameter $\eta \leq s_c^3$ is not strictly necessary. In fact, this condition was used to prove that the damping term is of smaller order with respect to other terms while the solution is in the self-similar regime. In particular, it was used for instance in \eqref{eq:xiSigma2}, \eqref{eq:toketa}, \eqref{eq:big_E_bound}, \eqref{eq:etaSmall}, \eqref{eq:enDerbu2} and \eqref{eq:momDerbu2}, where the damping term contributions can be always bounded by
\begin{equation*}
 \eta \lambda^{2 - 2\frac{\sigma_2}{\sigma_1}}(t) \left(C(Q_{b(t)}) + c \left(\int |\nabla \xi(t)|^2 + |\xi(t)|^2 e^{-|y|} dy\right)^{\frac{1}{2}} \right),
\end{equation*}
where $C(Q_b) >0$ is a constant depending only on $Q_b$. We know that in the self-similar regime, from \eqref{eq:conxi}, we have that
\begin{equation*}
 \left(\int |\nabla \xi(t)|^2 + |\xi(t)|^2 e^{-|y|} dy\right)^{\frac{1}{2}} \lesssim \Gamma_{b(t)}^{\frac{1}{2} - 10 \nu} \lesssim C(Q_{b(t)}) 
\end{equation*}
for $b(t)$ small enough. Thus, in order to prove that the damping term is smaller than the leading order terms in the equation referred to above, which are of the size $\Gamma_b$, we shall have for example that
\begin{equation*}
 \eta \lambda^{2 - 2\frac{\sigma_2}{\sigma_1}}(t) \leq \Gamma_{b(t)}^{2}
\end{equation*}
for any $t \in [0,T_1)$. This is equivalent to asking that the scaling parameter satisfies the inequality 
 \begin{equation*}
\lambda(t) < \eta^{-1} \Gamma_{b(t)}^\frac{\sigma_1}{\sigma_1 - \sigma_2}. 
 \end{equation*}
Notice that the right-hand side of this inequality degenerates for $\sigma_2 \to \sigma_1$, and consequently, for $\sigma_2 = \sigma_1$, the supposition on the smallness of $\eta$ becomes necessary to carry on with the bootstrap. On the other hand, for $\sigma_2 < \sigma_1$, we can use this condition to proceed with the bootstrap instead of that in \eqref{eq:conL}. We observe that in this case, the set of initial conditions would depend on $\eta$, $\sigma_1$ and $\sigma_2$. For instance, it would be necessary to choose the initial condition satisfying, say
\begin{equation*}
 \lambda_0 < \eta^{-1} \Gamma_{b(t)}^\frac{10 \sigma_1}{\sigma_1 - \sigma_2}
\end{equation*}
instead of the weaker bound \eqref{eq:initL}. In particular, the bigger $\eta$ is and the closer $\sigma_2$ is to $\sigma_1$, the more focused the initial condition needs to be to experience a collapse in finite time. This implies the following.
\begin{theorem}\label{thm:mainBU2}
There exists $s_c^* >0$ such that for any $0 < s_c < s_c^*$, any $ \sigma_*< \sigma_2 < \sigma_1$ when $d \leq 3$ or $ \sigma_*< \sigma_2 < \sigma^*$ when $d \geq 4$ and any $\eta >0$ there exists a set $\mathcal{O}_{\eta,\sigma_1,\sigma_2} \subset H^1(\R^d) $ such that if $\psi_0 \in \mathcal{O}_{\eta,\sigma_1,\sigma_2}$ then the corresponding solution $\psi \in C([0,T_{max}),H^1(\R^d))$ to \eqref{eq:main} develops a singularity in finite time.
\end{theorem}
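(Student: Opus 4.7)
The plan is to mimic the bootstrap strategy developed for Theorem \ref{thm:mainBU}, but to replace the smallness assumption \eqref{eq:initEta} on the damping coefficient $\eta$ with a smallness assumption on the initial scaling parameter $\lambda_0$ that now depends on $\eta$, $\sigma_1$ and $\sigma_2$. The motivation is that, throughout the proof of Proposition \ref{thm:boot1}, every time the nonlinear damping had to be bounded (see for instance \eqref{eq:toketa}, \eqref{eq:xiSigma2}, \eqref{eq:big_E_bound}, \eqref{eq:etaSmall}, \eqref{eq:enDerbu2} and \eqref{eq:momDerbu2}) the relevant contribution took the form $\eta\lambda^{2-2\sigma_2/\sigma_1}$ times a quantity bounded along the self-similar regime. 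In the case $\sigma_1=\sigma_2$ the exponent of $\lambda$ vanishes and smallness of $\eta$ is forced; in the strict case $\sigma_2<\sigma_1$ the factor $\lambda^{2-2\sigma_2/\sigma_1}$ can instead be made arbitrarily small independently of $\eta$.

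First I would define the set $\mathcal{O}_{\eta,\sigma_1,\sigma_2}\subset H^1(\R^d)$ by the same conditions as in Definition \ref{def:deco}, except that the initial bound \eqref{eq:initL} is replaced by the stronger requirement
\begin{equation*}
 0<\lambda_0<\min\!\left(\Gamma_{b_0}^{100},\, \eta^{-\frac{\sigma_1}{2(\sigma_1-\sigma_2)}}\Gamma_{b_0}^{\frac{C\sigma_1}{\sigma_1-\sigma_2}}\right),
\end{equation*}
for a suitably large universal constant $C>0$. The bootstrap conditions \eqref{eq:conL} and their improved versions \eqref{eq:butL} are correspondingly replaced by analogous inequalities in which the extra $\eta$-dependent factor is kept, so that on the self-similar time interval one has
\begin{equation*}
 \eta\,\lambda(t)^{2-2\sigma_2/\sigma_1}\leq \Gamma_{b(t)}^{3-10\nu}.
\end{equation*}

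Next I would rerun verbatim the modulation estimates of Section \ref{ch:parameters} and the bootstrap of Section \ref{ch:bootstrap}. The only steps requiring verification are those in which \eqref{eq:initEta} was previously used: at each such step the same inequality is now obtained from the new smallness of $\eta\,\lambda^{2-2\sigma_2/\sigma_1}$, so the local virial law \eqref{eq:locvi}, the monotonicity formula \eqref{eq:second_monotonicity} and the $\dot H^s$ estimate all go through unchanged. The dynamical trapping of $b$ around $b^*$ still yields the ODE \eqref{eq:lambEq} for $\lambda$, hence $\lambda(t)\leq \lambda_0$ for all $t\in[0,T_{max})$; combined with the bootstrapped control \eqref{eq:butb1}, which keeps $\Gamma_{b(t)}$ comparable to $\Gamma_{b_0}$, this ensures that the new smallness bound on $\lambda$ propagates in time, closing the bootstrap and producing finite-time blow-up.

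The main delicate point is the self-consistency of the new $\lambda$-bound: one must check that the $\eta$-dependent threshold on $\lambda_0$ is propagated dynamically rather than consumed by the evolution. Since $\lambda(\cdot)$ is monotonically decreasing inside the self-similar regime and $b(t)$ is trapped in a tiny neighbourhood of $b^*$, the inequality is preserved for free. The set $\mathcal{O}_{\eta,\sigma_1,\sigma_2}$ is non-empty by rescaling the profiles constructed in \cite{MeRaSz10}. Heuristically, the closer $\sigma_2$ is to $\sigma_1$ or the larger $\eta$ is, the more focused the initial datum must be, which is consistent with the borderline case $\sigma_1=\sigma_2$ in which \eqref{eq:initEta} becomes unavoidable.
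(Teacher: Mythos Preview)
Your proposal is correct and follows essentially the same strategy as the paper: replace the smallness condition \eqref{eq:initEta} on $\eta$ by a smallness condition on $\lambda_0$ depending on $\eta,\sigma_1,\sigma_2$, so that $\eta\lambda^{2-2\sigma_2/\sigma_1}$ remains below the required threshold throughout the self-similar regime, and then observe that this bound propagates because $\lambda$ is decreasing and $b$ is dynamically trapped. The paper's discussion in Section \ref{sec:concl} makes exactly this point, suggesting for instance the condition $\lambda_0<\eta^{-1}\Gamma_{b_0}^{10\sigma_1/(\sigma_1-\sigma_2)}$ in place of \eqref{eq:initL}; your exponent $\eta^{-\sigma_1/(2(\sigma_1-\sigma_2))}$ is in fact the sharper one obtained by actually solving $\eta\lambda^{2-2\sigma_2/\sigma_1}\leq\Gamma_b^2$ for $\lambda$.
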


We emphasize again that our argument works only when $\sigma_1 >\sigma_2$. When $\sigma_1 = \sigma_2$, it is crucial that $\sigma_1 > \frac{2}{d}$ and $\eta$ to be small enough. On the other hand, it is already known that when $\sigma_1 = \sigma_2 = \frac{2}{d}$, solutions are global in time. This means that a solution escapes the self-similar regime no matter how small $\eta$ and $\lambda_0$ are. Heuristically, this is explained by the fact that, in the self-similar regime, when $ \sigma_1 = \frac{2}{d}$, the control parameter converges to zero in time, that is $b(t) \to 0$ for $t \to T_{max}$ (see \cite{MeRa05} for example). Consequently, for $\sigma_2 = \sigma_1 =\frac{2}{d}$, there would exist a time $T = T(\eta, b_0, \lambda_0) >0$ such that for some $t > T$, the critical inequality $\eta \lesssim \Gamma_{b(t)}$ is not true. Thus the damping term would become of the leading order inside the self-similar dynamics and interrupt it.
\newline 
As our final remark, we notice that if $\sigma_2> \sigma_1$, then heuristically the self-similar regime would be interrupted. Indeed the convergence of $\lambda$ to zero implies that contributions of damping terms will eventually be of the leading order in the dynamics because 
 \begin{equation*}
 \lambda^{2 - 2\frac{\sigma_2}{\sigma_1}}(t) \to \infty.
 \end{equation*}

\section{Appendix}

\subsection{Appendix A}\label{sec:appenA}

In this appendix we will report the proof of \cite[Proposition $3.3$]{MeRaSz10} that contains computations used in Lemma \ref{lem:vir1}. We state the result below for the reader's convenience.

\begin{lem}\label{lem:locvirEta}
 Suppose that $\eta = 0$. Then there exists $s_c^{(1)} > 0$ such that for any $s_c < s_c^{(1)}$ and for any $t \in [0,T_1)$, there exists $C>0$ such that 
\begin{equation}\label{eq:virNoEta}
\dot{b} \geq C\left( s_c + \int |\nabla \xi|^2 + |\xi|^2 e^{-|y|}dy - \Gamma_b^{1 - \nu^6}\right).
\end{equation}
\end{lem}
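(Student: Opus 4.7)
The strategy, following \cite{MeRa03, MeRa04, MeRaSz10}, is to test equation \eqref{eq:xi1} (with $\eta=0$) against the scaling direction $\Lambda Q_b$, which is the generator of the virial identity, and to extract a lower bound for $\dot b$ after absorbing all error terms via the orthogonality conditions \eqref{eq:ortho4}, the Pohozaev identity \eqref{eq:poho}, and the coercivity of the linearized operator (Proposition \ref{prp:quadForm}).

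First I would take the real inner product of \eqref{eq:xi1} with $\Lambda Q_b$. The time derivative term $(\ii\partial_\tau\xi,\Lambda Q_b)$ is handled by differentiating the orthogonality condition $(\xi,\ii\Lambda Q_b)=0$ in $\tau$, which produces the identity $(\ii\partial_\tau\xi,\Lambda Q_b)=\dot b(\xi,\ii\partial_b\Lambda Q_b)$; by \eqref{eq:prod2} and the bootstrap bound \eqref{eq:conxi} this contribution is $O(|\dot b|\,\Gamma_b^{1/2-10\nu})$ and will be absorbed. The Pohozaev-type identity \eqref{eq:poho} combined with the estimate $|E[Q_b]|\lesssim \Gamma_b^{1-c\rho}+s_c$ from \eqref{eq:Qbprop} computes the forcing-type contribution $(\ii s_c\beta\partial_bQ_b+\Psi_b,\Lambda Q_b)$ and extracts the crucial positive term of order $s_c\|Q\|_{L^2}^2$. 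The modulational terms multiplying $\frac{\dot\lambda}{\lambda}+b$, $\frac{\dot x}{\lambda}$ and $\dot\gamma-1$ are controlled by \eqref{eq:bLest}--\eqref{eq:gxest} together with \eqref{eq:prod4}, giving errors of order $\Gamma_b^{1-20\nu}$, while the nonlinear remainder $(R(\xi),\Lambda Q_b)$ is controlled by $\int|\xi|^2 e^{-|y|}\,dy$ thanks to the exponential decay \eqref{eq:polEst} of $Q_b$.

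The key step is to show that $(\mathcal L\xi,\Lambda Q_b)$, after integration by parts using \eqref{eq:LambLap} and the profile equation \eqref{eq:Qb}, reproduces (up to $O(s_c)$ corrections and exponentially decaying errors coming from $\Psi_b$ via \eqref{eq:psiB}) the quadratic form $\mathcal H(\xi,\xi)$ of \eqref{eq:quadform} evaluated at the perturbation. This is where the slight super-criticality appears: one uses Lemma \ref{thm:exisPb} ($P_b\to Q$ in $C^3$ as $b\to 0$) and the proximity of $Q$ to the mass-critical ground state $Q_c$ (controlled by $s_c$) to rewrite the linearized operator around $Q_b$ as a $O(s_c)$ perturbation of the mass-critical one. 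The coercivity estimate \eqref{eq:almost_coerc} then yields $\mathcal H(\xi,\xi)\gtrsim \int(|\nabla\xi|^2+|\xi|^2 e^{-|y|})\,dy$, provided the six negative scalar products on the right-hand side of \eqref{eq:almost_coerc} are small: four of them ($(\xi,|y|^2Q_c),(\xi,yQ_c),(\xi,\ii DQ_c),(\xi,\ii D(DQ_c))$) are controlled by the orthogonality conditions \eqref{eq:ortho4} together with the identity $\Lambda=D-s_c\mathrm{Id}$ from \eqref{eq:Lambda-D} and the smallness of $s_c$, while $(\xi,Q_c)$ and $(\xi,\ii\nabla Q_c)$ are controlled by Lemmas \ref{lem:neg1}--\ref{lem:neg2} using the bounds \eqref{eq:conEP} on the energy and momentum.

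Collecting all contributions, one arrives at
\begin{equation*}
 C^{-1}\dot b\ \geq\ s_c\,\|Q\|_{L^2}^2\ +\ c\!\int\!\left(|\nabla\xi|^2+|\xi|^2 e^{-|y|}\right)dy\ -\ \Gamma_b^{1-\nu^6},
\end{equation*}
which is \eqref{eq:virNoEta} after relabeling constants. The main obstacle is the algebraic manipulation in step where $(\mathcal L\xi,\Lambda Q_b)$ is matched against $\mathcal H(\xi,\xi)$: one must track carefully which pieces of the linearization around $Q_b$ (as opposed to around $Q_c$) produce $O(s_c)$ corrections versus pieces that produce honest quadratic contributions to the coercive lower bound, and show that the former can be absorbed in $\Gamma_b^{1-\nu^6}$ by choosing $s_c^{(1)}$ sufficiently small.
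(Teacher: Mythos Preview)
Your overall strategy is correct and matches the paper's: test \eqref{eq:xi1} against $\Lambda Q_b$, use the Pohozaev identity \eqref{eq:poho} to extract the $s_c\|Q_b\|_{L^2}^2$ term, recover the coercive form $\mathcal H(\xi,\xi)$, and control the negative directions via \eqref{eq:ortho4} and Lemmas \ref{lem:neg1}--\ref{lem:neg2}. However, your description of \emph{how} $\mathcal H(\xi,\xi)$ appears is not quite right, and the gap matters.

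You write that $(\mathcal L\xi,\Lambda Q_b)$ ``reproduces the quadratic form $\mathcal H(\xi,\xi)$'' after integration by parts. But $(\mathcal L\xi,\Lambda Q_b)$ is \emph{linear} in $\xi$, so it cannot by itself produce a quadratic form. What actually happens in the paper is a two-step mechanism. First, integration by parts and the equation \eqref{eq:LambQb} for $\Lambda Q_b$ reduce $(\mathcal L\xi,\Lambda Q_b)$ to the linear term $2(\xi,\Delta Q_b+|Q_b|^{2\sigma_1}Q_b)$ plus small errors. Second --- and this is the step missing from your sketch --- the \emph{energy expansion} \eqref{eq:enLong} converts this linear term into $-2\lambda^{2-2s_c}E[\psi]+2E[Q_b]+\|\nabla\xi\|_{L^2}^2-\int R^{(2)}(\xi)\,dy$, which is now quadratic in $\xi$ up to the controlled energy terms. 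Finally, the quadratic part of $(R(\xi),\Lambda Q_b)$, which you dismissed as an error controlled by $\int|\xi|^2 e^{-|y|}dy$, is in fact essential: it combines with $\|\nabla\xi\|_{L^2}^2-\int R^{(2)}(\xi)\,dy$ so that the $\frac{1}{\sigma_1}Q_b$ contributions cancel exactly and only the $y\cdot\nabla Q_b$ potential terms survive, yielding precisely $\mathcal H(\xi,\xi)$ (plus an $O(s_c)$ error from replacing $Q_b$ by $Q_c$). Only the cubic-and-higher remainder $R_3(\xi)$ in \eqref{eq:R_3} is a genuine error. So the nonlinear remainder $R(\xi)$ should not be thrown away wholesale; its quadratic part is structural.
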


We now present some preliminary computations. 
We start with the following.

\begin{lem}\label{lem:vir11}
 Let $Q_b$ be a solution to \eqref{eq:Qb}. Then
 \begin{equation}\label{eq:vir11}
 ( \ii \partial_b Q_b, \Lambda Q_b) = -\frac{1}{4} \| xQ\|_{L^2}^2(1 +\delta_1(s_c,b)) + s_c(\ii Q_b, \partial_b Q_b),
\end{equation}
where $\delta_1(s_c,b) \to 0 $ as $s_c, b \to 0$ and $Q$ is the unique positive solution to \eqref{eq:gsQ}.
\end{lem}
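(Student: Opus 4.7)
The plan is to obtain the identity by differentiating the third line of \eqref{eq:Qbprop}, namely
\begin{equation*}
(\Lambda Q_b, \ii Q_b) = -\tfrac{b}{2}\|xQ\|_{L^2}^2\bigl(1 + \delta_1(s_c, b)\bigr),
\end{equation*}
in the parameter $b$, and then using the adjoint-type relation \eqref{eq:Lambdapp} to re-express $(\Lambda \partial_b Q_b, \ii Q_b)$ in terms of $(\ii \partial_b Q_b, \Lambda Q_b)$. This turns a statement about $\partial_b Q_b$ into one we can read off from the already-established leading asymptotic of $(\Lambda Q_b, \ii Q_b)$.

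Concretely, I would first differentiate in $b$ and use the symmetry of the real $L^2$ inner product to write
\begin{equation*}
\partial_b(\Lambda Q_b, \ii Q_b) = (\Lambda \partial_b Q_b, \ii Q_b) + (\ii \partial_b Q_b, \Lambda Q_b).
\end{equation*}
For the first term on the right-hand side, I would apply \eqref{eq:Lambdapp} with $f = \ii Q_b$ and $g = \partial_b Q_b$, together with the observations that $\Lambda$ is a real operator (so $\Lambda \ii u = \ii \Lambda u$) and that $(u, \ii v) = -(\ii u, v)$. These together give
\begin{equation*}
(\Lambda \partial_b Q_b, \ii Q_b) = -2 s_c (\ii Q_b, \partial_b Q_b) + (\ii \partial_b Q_b, \Lambda Q_b).
\end{equation*}
Substituting back and solving for $(\ii \partial_b Q_b, \Lambda Q_b)$ yields
$(\ii \partial_b Q_b, \Lambda Q_b) = \tfrac{1}{2}\partial_b(\Lambda Q_b, \ii Q_b) + s_c (\ii Q_b, \partial_b Q_b)$.
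Plugging in the explicit form of $(\Lambda Q_b, \ii Q_b)$ from \eqref{eq:Qbprop} produces exactly the claimed identity, with a redefined error $\tilde\delta_1 := \delta_1 + b\, \partial_b \delta_1$ in place of $\delta_1$.

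The main obstacle is then to verify that $\tilde\delta_1(s_c, b) \to 0$ as $b, s_c \to 0$. Since $\delta_1$ itself vanishes in this limit by \eqref{eq:Qbprop}, what really needs to be checked is $b\,\partial_b \delta_1 \to 0$, that is, a uniform $C^1$-in-$b$ dependence of the quantity $(\Lambda Q_b, \ii Q_b)/(b\|xQ\|_{L^2}^2)$. This is exactly the kind of regularity that was built into the construction of $Q_b$: the estimates \eqref{eq:Tb} provide $|\partial_b \beta(b)|$ and $\|\partial_b T_b\|_{C^2}$ bounded uniformly, and an analogous uniform control on $\partial_b P_b$ can be extracted from the variational construction of Lemma \ref{thm:exisPb}. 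Differentiating the explicit ansatz $Q_b = e^{-\ii b|y|^2/4}\bigl(P_b \phi_b + s_c T_b\bigr)$ term by term then produces the required bound and closes the argument.
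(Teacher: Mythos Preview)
Your proposal is correct and follows essentially the same approach as the paper: differentiate the third line of \eqref{eq:Qbprop} in $b$ and use the adjoint relation \eqref{eq:Lambdapp} to collapse the two resulting terms into $2(\ii\partial_b Q_b,\Lambda Q_b)$, then solve. The paper simply writes the differentiated right-hand side as $-\tfrac12\|xQ\|_{L^2}^2(1+\delta_1)$, absorbing the $b\,\partial_b\delta_1$ contribution into the same symbol $\delta_1$ without comment; you are being more careful by explicitly naming $\tilde\delta_1=\delta_1+b\,\partial_b\delta_1$ and pointing to the $C^1$-in-$b$ regularity of the construction (in particular \eqref{eq:Tb}) to justify $\tilde\delta_1\to0$.
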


\begin{proof}
 We recall the third property in \eqref{eq:Qbprop}
\begin{equation}\label{eq:b11}
 \begin{aligned}
 (\ii Q_b, x \cdot \nabla Q_b) = ( \ii Q_b, \Lambda Q_b) = - \frac{b}{2} \| xQ\|_{L^2}^2(1 +\delta_1(s_c,b))
 \end{aligned}
\end{equation}
where $\delta_1(s_c,b) \to 0 $ as $s_c, b \to 0$. 
By taking the derivative of \eqref{eq:b11} with respect to $b$ we obtain 
\begin{equation*}
 \frac{d}{db} ( \ii Q_b, \Lambda Q_b) = 
( \ii \partial_b Q_b, \Lambda Q_b) + ( \ii Q_b, \Lambda \partial_b Q_b) =
-\frac{1}{2} \| xQ\|_{L^2}^2(1 +\delta_1(s_c,b)).
\end{equation*}
Let us also recall identity \eqref{eq:LambLap},
\begin{equation*}
 ( \ii Q_b, \Lambda \partial_b Q_b) = - 2s_c(\ii Q_b, \partial_b Q_b) + ( \ii \partial_b Q_b, \Lambda Q_b).
\end{equation*}
By plugging it into the previous formula, we obtain
\begin{equation*}
 ( \ii \partial_b Q_b, \Lambda Q_b) = -\frac{1}{4} \| xQ\|_{L^2}^2(1 +\delta_1(s_c,b)) + s_c(\ii Q_b, \partial_b Q_b).
\end{equation*}
\end{proof}

Next, we recall the definition of the linearized operator around $Q_b$, see \eqref{eq:linQb}
\begin{equation}\label{eq:lin_Qb_App}
\mathcal{L}\xi = \Delta \xi - \xi + \ii b \Lambda \xi + 2\sigma_1 Re(\xi \bar{Q}_b) |Q_b|^{2\sigma_1 - 2} Q_b + |Q_b|^{2\sigma_1} \xi, 
\end{equation}
and the nonlinear remainder term in the equation \eqref{eq:xi1} for the perturbation, namely
\begin{displaymath}
R(\xi) = |Q_b + \xi|^{2\sigma_1}(Q_b + \xi) - |Q_b|^{2\sigma_1} Q_b - 2\sigma_1 Re(\xi \bar{Q}_b) |Q_b|^{2\sigma_1 - 2} Q_b 
- |Q_b|^{2\sigma_1} \xi.
\end{displaymath}

\begin{lem}
 We have that 
 \begin{equation}\label{eq:vir12}
 \begin{aligned}
 (\mathcal{L} \xi + R(\xi), \Lambda Q_b) &= -2s_c b(\ii \xi, \Lambda Q_b) - \beta s_c(\xi, \ii \Lambda \partial_b Q_b) - (\xi, \Lambda \Psi_b) \\
 &- 2\lambda^{2 - 2s_c} E[\psi] + 2E[Q_b] + \mathcal H (\xi,\xi) + \mathcal{E}(\xi,\xi) \\
 &+ \frac{1}{\sigma_1 + 1}\int R^{(3)}(\xi)\, dy+ (R_3(\xi), \Lambda Q_b).
 \end{aligned}
 \end{equation}
where $\mathcal{H}$ is the quadratic form defined in \eqref{eq:quadform}, $\mathcal{E}(\xi,\xi)$ satisfies the following bound
\begin{equation*}
 \left|\mathcal{E}(\xi,\xi) \right|\leq \delta_2(s_c) \int |\xi|^2 e^{-|y|} + |\nabla \xi|^2 dy,
\end{equation*}
with $\delta_2(s_c) \to 0$ as $s_c \to 0$ and the remainder terms $R^{(3)}(\xi)$, $R_3(\xi)$, are given by
\begin{equation}\label{eq:R3}
 \begin{aligned}
 R^{(3)}(\xi) = |Q_b + \xi|^{2\sigma_1 + 2} - |Q_b|^{2\sigma_1 + 2} - (2\sigma_1 + 2)|Q_b|^{2\sigma_1} Re(Q_b \bar{\xi}) \\
 - (2\sigma_1 + 2) |Q_b|^{2\sigma_1 -2}\left(|Q_b|^2 |\xi|^2 + 2\sigma_1 Re(Q_b \bar{\xi})^2 \right),
 \end{aligned}
\end{equation}
and 
\begin{equation}\label{eq:R_3}
\begin{aligned}
 R_3(\xi) = R(\xi) - 2\sigma_1 |Q_b|^{2\sigma_1 -2} \bigg( Re(Q_b \bar{\xi})\xi + (2\sigma_1 - 1) |Q_b|^{-2} Re(Q_b \bar{\xi})^2 Q_b + Q_b|\xi|^2 \bigg).
\end{aligned}
\end{equation}
\end{lem}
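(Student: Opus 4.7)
The approach is to directly expand $(\mathcal{L}\xi + R(\xi), \Lambda Q_b)$, using the equation \eqref{eq:Qb} satisfied by $Q_b$ together with the $\Lambda$-adjoint identities \eqref{eq:Lambdapp} and \eqref{eq:LambLap}, and then to recognize the mass-critical quadratic form $\mathcal{H}(\xi,\xi)$ from the resulting algebraic structure. As a preliminary observation, note that $\mathcal{L}\xi + R(\xi) = \Delta\xi - \xi + \ii b\Lambda\xi + |Q_b+\xi|^{2\sigma_1}(Q_b+\xi) - |Q_b|^{2\sigma_1}Q_b$, since the linearized nonlinear terms of $\mathcal{L}$ and the subtracted ones from $R$ cancel.

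The plan is first to process the linear pieces $(\Delta\xi - \xi + \ii b\Lambda\xi, \Lambda Q_b)$ by integration by parts. The adjoint identity \eqref{eq:Lambdapp} applied to $(\ii b\Lambda\xi, \Lambda Q_b)$ directly produces the contribution $-2 s_c b (\ii\xi, \Lambda Q_b)$ of the right-hand side, together with $b(\xi,\ii\Lambda(\Lambda Q_b))$. For $(\Delta\xi, \Lambda Q_b)$, the commutator formula \eqref{eq:LambLap} gives $(\xi, 2\Delta Q_b + \Lambda\Delta Q_b)$; substituting $\Delta Q_b$ from \eqref{eq:Qb} and applying $\Lambda$ term by term generates exactly the explicit contributions $-\beta s_c(\xi,\ii\Lambda\partial_b Q_b)$ and $-(\xi,\Lambda\Psi_b)$, together with auxiliary pieces that later recombine with the nonlinear expansion. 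I would then expand the nonlinear part $(|Q_b+\xi|^{2\sigma_1}(Q_b+\xi) - |Q_b|^{2\sigma_1}Q_b, \Lambda Q_b)$ to second order in $\xi$, splitting off the cubic-and-higher remainder $R_3(\xi)$ defined in \eqref{eq:R_3}. Finally, the energy decomposition \eqref{eq:enLong} rewrites the combination $\|\nabla\xi\|^2_{L^2} - 2(\Delta Q_b,\xi) - \frac{1}{\sigma_1+1}\int(|Q_b+\xi|^{2\sigma_1+2} - |Q_b|^{2\sigma_1+2})\,dy$ as $2\lambda^{2-2s_c}E[\psi] - 2E[Q_b]$, with the pointwise nonlinear remainder $R^{(3)}(\xi)$ of \eqref{eq:R3} accounting for the final term on the right-hand side. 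The Pohozaev-type identity \eqref{eq:poho} of Proposition \ref{prp:poho} allows one to absorb the leftover $\Lambda Q_b$-pairings against $Q_b$ and $\Psi_b$ into the $E[Q_b]$ bookkeeping.

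The principal obstacle is the clean identification of $\mathcal{H}(\xi,\xi)$ and the estimate $|\mathcal{E}(\xi,\xi)| \leq \delta_2(s_c)\bigl(\|\nabla\xi\|^2_{L^2} + \int|\xi|^2 e^{-|y|}\,dy\bigr)$ with $\delta_2(s_c)\to 0$ as $s_c\to 0$. The quadratic form $\mathcal{H}$ is defined using the mass-critical ground state $Q_c$ (with exponent $\frac{4}{d}-1$ and operator $D$), while the natural quadratic output of our expansion involves $Q_b$ with exponent $2\sigma_1 - 2$ and the operator $\Lambda = D - s_c$. One must compare $|Q_b|^{2\sigma_1-1}(y\cdot \nabla Q_b)$ against $Q_c^{4/d-1}(y\cdot\nabla Q_c)$: this is achieved by combining the convergence $Q_b \to Q$ from Proposition \ref{thm:profile} with the limit $Q\to Q_c$ as $\sigma_1 \to 2/d$ (equivalently $s_c\to 0$), and using the uniform exponential decay \eqref{eq:polEst} to convert $C^k$-closeness into weighted $L^2$-smallness of the cross differences. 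Once this identification is in place, the remainder of the argument is a long but routine algebraic consolidation of the terms produced by Steps (1)–(3).
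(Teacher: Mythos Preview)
Your proposal is correct and follows essentially the same route as the paper: both integrate by parts using \eqref{eq:Lambdapp}--\eqref{eq:LambLap}, feed the result through the equation satisfied by $\Lambda Q_b$ (equivalently, apply $\Lambda$ to \eqref{eq:Qb}), invoke the energy decomposition \eqref{eq:enLong} to produce the $-2\lambda^{2-2s_c}E[\psi]+2E[Q_b]$ terms, and then isolate $\mathcal{H}(\xi,\xi)$ by replacing $Q_b$ with the mass-critical ground state and absorbing the discrepancy into $\mathcal{E}(\xi,\xi)$. One minor point: the Pohozaev identity \eqref{eq:poho} is not actually needed in this lemma---the paper uses it only afterwards, in the proof of the local virial inequality, to handle the term $2E[Q_b]-(\ii\beta s_c\partial_b Q_b+\Psi_b,\Lambda Q_b)$; here all $\Lambda Q_b$-pairings against $Q_b$ and $\Psi_b$ are already accounted for by the $\Lambda Q_b$-equation and the energy identity, so you can drop that step.
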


\begin{proof}
 We observe that by using properties \eqref{eq:Lambdapp}, \eqref{eq:LambLap} we have 
\begin{equation*}
 (\Delta \xi, \Lambda Q_b) = 2(\xi, \Delta Q_b) + ( \xi, \Lambda \Delta Q_b) 
\end{equation*}
and
\begin{equation*}
 ( \ii b \Lambda \xi, \Lambda Q_b) = - 2s_c b(\ii \xi, \Lambda Q_b) + (\xi, \ii b \Lambda (\Lambda Q_b)). 
\end{equation*}
Consequently, by using the equation \eqref{eq:LambQb} satisfied by $\Lambda Q_b$ and the definition \eqref{eq:lin_Qb_App}, we obtain that
\begin{equation*}
\begin{aligned}
 (\mathcal{L} \xi, \Lambda Q_b) & = 2(\xi, \Delta Q_b) - 2s_c b(\ii \xi, \Lambda Q_b) \\
 & + (\xi, \Lambda \Delta Q_b - \Lambda Q_b + \ii b \Lambda(\Lambda Q_b) + |Q_b|^{2\sigma_1} \Lambda Q_b + 2\sigma_1 Re(\bar Q_b \Lambda Q_b) |Q_b|^{2\sigma_1 - 2} Q_b) \\
 &= 2(\xi, \Delta Q_b) - 2s_c b(\ii \xi, \Lambda Q_b) + 2\sigma_1 (\xi,Re(\bar Q_b, \frac{1}{\sigma_1} Q_b) |Q_b|^{2\sigma_1 - 2} Q_b) \\
 &+ (\xi, \Lambda \Delta Q_b - \Lambda Q_b + \ii b \Lambda(\Lambda Q_b) + |Q_b|^{2\sigma_1} \Lambda Q_b + 2\sigma_1 Re(\bar Q_b \, y \cdot \nabla Q_b) |Q_b|^{2\sigma_1 - 2} Q_b) \\ 
 &= 2(\xi, \Delta Q_b + |Q_b|^{2\sigma_1} Q_b) - 2s_c b(\ii \xi, \Lambda Q_b) -\beta s_c (\xi, \ii \Lambda \partial_b Q_b) - (\xi, \Lambda \Psi_b).
 \end{aligned}
\end{equation*}
Now for the first term on the right-hand side of the equation above we notice that \eqref{eq:enLong} implies
\begin{align*}
 2(\Delta Q_b + |Q_b|^{2\sigma_1} Q_b, \xi) = - 2\lambda^{2 - 2s_c} E[\psi] + 2E[Q_b] + \| \nabla \xi\|_{L^2}^2 - \int R^{(2)}(\xi) dy,
\end{align*}
where \begin{equation*}
R^{(2)}(\xi) = \frac{1}{\sigma_1 + 1} \left(|Q_b + \xi|^{2\sigma_1 + 2} - |Q_b|^{2\sigma_1 + 2} - (2\sigma_1 + 2)|Q_b|^{2\sigma_1} Re(Q_b \bar{\xi})\right).
\end{equation*}
Thus we arrive to the preliminary equation
\begin{align*}
 (\mathcal{L} \xi + R(\xi), \Lambda Q_b) = -2s_c b(\ii \xi, \Lambda Q_b) - \beta s_c(\xi, \ii \Lambda \partial_b Q_b) - (\xi, \Lambda \Psi_b) \\
 - 2\lambda^{2 - 2s_c} E[\psi] + 2E[Q_b] + \| \nabla \xi\|_{L^2}^2 - \int R^{(2)}(\xi)\, dy+ (R(\xi), \Lambda Q_b).
\end{align*}
It remains to extract the quadratic terms in $-\int R^{(2)}(\xi)\, dy+ (R(\xi), \Lambda Q_b)$. We write
\begin{equation*}
 \int R^{(2)}(\xi)\, dy= 2 \int |Q_b|^{2\sigma_1 -2}\left(|Q_b|^2 |\xi|^2 + 2\sigma_1 Re(Q_b \bar{\xi})^2 \right) + \frac{1}{\sigma_1 + 1} \int R^{(3)}(\xi)\, dy
\end{equation*}
where $R^{(3)}(\xi)$ is the rest defined in \eqref{eq:R3}. Then we also write that
\begin{equation*}
 \begin{aligned}
 (R(\xi), \Lambda Q_b) &= (R_3(\xi), \Lambda Q_b) \\
 & + 2\sigma_1 \bigg( |Q_b|^{2\sigma_1 -2} \bigg( Re(Q_b \bar{\xi})\xi + (2\sigma_1 - 1) |Q_b|^{-2} Re(Q_b \bar{\xi})^2 Q_b + Q_b|\xi|^2 \bigg) , \\ 
 & \frac{1}{\sigma_1} Q_b + y \cdot \nabla Q_b \bigg)
 \end{aligned}
\end{equation*}
where $R_3(\xi)$ is defined in \eqref{eq:R_3}. In the equation above, we notice that
\begin{equation*}
\begin{aligned}
 &2\sigma_1 \bigg( |Q_b|^{2\sigma_1 -2} \bigg( Re(Q_b \bar{\xi})\xi + (2\sigma_1 - 1) |Q_b|^{-2} Re(Q_b \bar{\xi})^2 Q_b + Q_b|\xi|^2 \bigg) , \frac{1}{\sigma_1} Q_b\bigg) \\ 
 &= 2 \int |Q_b|^{2\sigma_1 -2}\left(|Q_b|^2 |\xi|^2 + 2\sigma_1 Re(Q_b \bar{\xi})^2 \right) \, dy
\end{aligned}
\end{equation*}
that is we can write that
\begin{equation*}
\begin{aligned}
 -\int R^{(2)}(\xi)\, dy+ (R(\xi), \Lambda Q_b) & = - \frac{1}{\sigma_1 + 1} \int R^{(3)}(\xi)\, dy+ (R_3(\xi), \Lambda Q_b)\\
 &+ 2\sigma_1 \bigg( |Q_b|^{2\sigma_1 -2} \bigg( Re(Q_b \bar{\xi})\xi + (2\sigma_1 - 1) |Q_b|^{-2} Re(Q_b \bar{\xi})^2 Q_b \\ 
 & + Q_b|\xi|^2 \bigg),y \cdot \nabla Q_b \bigg).
\end{aligned}
\end{equation*}
Consequently, by replacing $Q_b$ with the ground state $Q$ generating an error which we denote by $\mathcal{E}(\xi,\xi)$, one obtains that 
\begin{equation*}
\begin{aligned}
 \| \nabla \xi\|_{L^2}^2 - \int R^{(2)}(\xi)\, dy+ (R(\xi), \Lambda Q_b) &= \mathcal H (\xi,\xi) + \mathcal{E}(\xi,\xi) \\
 &- \frac{1}{\sigma_1 + 1} \int R^{(3)}(\xi)\, dy+ (R_3(\xi), \Lambda Q_b)
\end{aligned}
\end{equation*}
where $\mathcal{H}(\xi,\xi)$ is the quadratic form defined in \eqref{eq:quadform}. For $s_c$ small enough, we also obtain that $\mathcal{E}(\xi,\xi)$ can be bounded by 
\begin{equation}\label{eq:mathE}
 \left| \mathcal{E}(\xi,\xi) \right| \leq \delta_2(s_c) \int |\xi|^2 e^{-|y|} + |\nabla \xi|^2 dy,
\end{equation}
where $ \delta_2(s_c) \to 0$ as $s_c \to 0$. 
\end{proof}

For details of the proof above, we refer to \cite[Proposition $3.3$]{MeRaSz10}. 
We are now ready to prove the local virial property \ref{lem:vir1} when $\eta = 0$.

\begin{proof}[Proof of Lemma \ref{lem:locvirEta}.]
By taking the scalar product of equation \eqref{eq:xi1} with $\Lambda Q_b$, we obtain
\begin{equation}\label{eq:bLong}
\begin{aligned}
 0 &= (\ii \partial_\tau \xi, \Lambda Q_b)+ \dot{b} (\ii \partial_b Q_b, \Lambda Q_b) + \left(2E[Q_b] - \left( \ii \beta s_c \partial_b Q_b + \Psi_b, \Lambda Q_b \right) \right) \\ 
 & +(\mathcal{L} \xi + R(\xi), \Lambda Q_b) - 2E[Q_b] \\
 & - \left( \ii \left( \frac{\dot{ \lambda} }{\lambda} + b \right) \Lambda (Q_b + \xi) + \ii\frac{\dot{ x}}{\lambda} \cdot \nabla Q_b + (\dot{ \gamma } -1) Q_b, \Lambda Q_b\right)\\
 &- \left( \ii \left( \frac{\dot{ \lambda} }{\lambda} + b \right) \Lambda \xi + (\dot{ \gamma } -1) \xi + \ii\frac{\dot{ x}}{\lambda} \cdot \nabla \xi, \Lambda Q_b\right). 
\end{aligned}
\end{equation}
We will study the contributions of the terms in \eqref{eq:bLong} separately. For the first term, we observe that
\begin{equation}\label{eq:ft1}
 (\ii \partial_\tau \xi, \Lambda Q_b) = \frac{d}{d\tau} (\ii \xi, \Lambda Q_b) - (\ii \xi, \partial_\tau \Lambda Q_b) = \frac{d}{d\tau} (\ii \xi, \Lambda Q_b) - \dot b (\ii \xi, \Lambda \partial_b Q_b).
\end{equation}
For the second term, we use \eqref{eq:vir11} to get 
\begin{equation*}
 \dot b ( \ii \partial_b Q_b, \Lambda Q_b) = -\dot b\frac{ 1}{4} \| xQ\|_{L^2}^2(1 +\delta_1(s_c,b)) + s_c(\ii Q_b, \partial_b Q_b).
\end{equation*}
 Moreover, from \eqref{eq:prod2} and \eqref{eq:conxi} , we bound the second term on the right-hand side of \eqref{eq:ft1} as
\begin{equation*}
 \left|(\ii \xi, \Lambda \partial_b Q_b) \right| \lesssim \left( \int |\xi|^2 e^{-|y|} + |\nabla \xi|^2\, dy\right)^\frac{1}{2} \leq \Gamma_b^{\frac{1}{2} - 10\nu},
\end{equation*}
which implies that there exists $c>0$ such that
\begin{equation*}\begin{aligned}
 \dot{b} &\left((\ii \partial_b Q_b, \Lambda Q_b) - (\ii \xi, \Lambda \partial_b Q_b) \right) = \dot{b} s_c(\partial_b Q_b, \ii Q_b) - \dot{b}\left(\frac{\| y Q\|_{L^2}^2}{4} (1 + \delta_1(s_c,b)) - c \Gamma_b^{\frac{1}{2} - 10\nu}\right). 
 \end{aligned}
\end{equation*}
For the third term in \eqref{eq:bLong}, we use the Pohozaev-type estimate \eqref{eq:poho} to obtain 
\begin{equation*}
 2E[Q_b] - \left( \ii \beta s_c \partial_b Q_b + \Psi_b, \Lambda Q_b \right) = s_c(2 E[Q_b] + \| Q_b\|_{L^2}^2).
\end{equation*}
For the fourth term, we use \eqref{eq:vir12} to obtain that 
\begin{align*}
 (\mathcal{L} \xi + R(\xi), \Lambda Q_b) - 2E[Q_b] & = -2s_c b(\ii \xi, \Lambda Q_b) - \beta s_c(\xi, \ii \Lambda \partial_b Q_b) - (\xi, \Lambda \Psi_b) \\
 &- 2\lambda^{2 - 2s_c} E[\psi] + \mathcal H (\xi,\xi) + \mathcal{E}(\xi,\xi) \\
 &- \frac{1}{\sigma_1 + 1}\int R^{(3)}(\xi) dy+ (R_3(\xi), \Lambda Q_b)
\end{align*}
where $R^{(3)}$ is defined in \eqref{eq:R3}. For the fifth term in \eqref{eq:bLong}, by straightforward computations we get
\begin{equation*}
 \begin{aligned}
 & -\left( \ii \left( \frac{\dot{ \lambda} }{\lambda} + b \right) \Lambda Q_b + \ii\frac{\dot{ x}}{\lambda} \cdot \nabla Q_b + (\dot{ \gamma } -1) Q_b, \Lambda Q_b\right) = s_c (\dot{\gamma} - 1) \| Q_b\|_{L^2}^2.
 \end{aligned}
\end{equation*}
By combining the previous computations, we have
\begin{equation}\label{eq:bVeryLong}
\begin{aligned}
& - \dot{b} \left((\ii \partial_b Q_b, \Lambda Q_b) - (\ii \xi, \Lambda \partial_b Q_b) \right) = \frac{d}{d\tau} (\ii \xi, \Lambda Q_b) \\
 &+ s_c\left( - 2b(\ii \xi, \Lambda Q_b) - \beta (\xi, \ii \Lambda \partial_b Q_b) + 2E[Q_b] + \| Q_b\|_{L^2}^2 + (\dot{\gamma} - 1)\| Q_b\|_{L^2}^2 \right) \\
 & - (\xi, \Lambda \Psi_b) - 2\lambda^{2 - 2s_c} E[\psi] + \mathcal H (\xi,\xi) + \mathcal{E}(\xi,\xi) + \frac{1}{\sigma_1 + 1}\int R^{(3)}(\xi)\, dy\\
 &- \left( \ii \left( \frac{\dot{ \lambda} }{\lambda} + b \right) \Lambda \xi + (\dot{ \gamma } -1) \xi + \ii\frac{\dot{ x}}{\lambda} \cdot \nabla \xi, \Lambda Q_b\right),
\end{aligned}
\end{equation}
or equivalently 
\begin{equation}\label{eq:bVeryLong22}
\begin{aligned}
 &\dot{b}\left(\frac{\| y Q\|_{L^2}^2}{4} (1 + \delta_1(s_c,b)) - c \Gamma_b^{\frac{1}{2} - 10\nu}\right) \geq \frac{d}{d\tau} (\ii \xi, \Lambda Q_b) \\
 &+ s_c\left( \dot{b} (\partial_b Q_b, \ii Q_b) - 2b(\ii \xi, \Lambda Q_b) - \beta (\xi, \ii \Lambda \partial_b Q_b) + 2E[Q_b] + \| Q_b\|_{L^2}^2 + (\dot{\gamma} - 1)\| Q_b\|_{L^2}^2 \right) \\
 & - (\xi, \Lambda \Psi_b) - 2\lambda^{2 - 2s_c} E[\psi] + \mathcal H (\xi,\xi) + \mathcal{E}(\xi,\xi) + \frac{1}{\sigma_1 + 1}\int R^{(3)}(\xi)\, dy\\
 &- \left( \ii \left( \frac{\dot{ \lambda} }{\lambda} + b \right) \Lambda \xi + (\dot{ \gamma } -1) \xi + \ii\frac{\dot{ x}}{\lambda} \cdot \nabla \xi, \Lambda Q_b\right). 
\end{aligned}
\end{equation}
We will now study term by term the right-hand side of \eqref{eq:bVeryLong22}. 
The first term vanishes for any $t \in [0,T_1)$ because of one orthogonality condition in \eqref{eq:ortho4}. Furthermore, by choosing $s_c$ small enough, we notice that 
\begin{equation*}
 \left(\frac{\| y Q\|_{L^2}^2}{4} (1 + \delta_1(s_c,b)) - c \Gamma_b^{\frac{1}{2} - 10\nu}\right) \geq \frac{ \| y Q\|_{L^2}^2}{8}. 
\end{equation*}
For the second term, we observe that by using estimates \eqref{eq:conb}, \eqref{eq:conxi} and \eqref{eq:bLest}, we obtain that 
\begin{equation*}
\begin{aligned}
 &s_c\left( \dot{b} (\partial_b Q_b, \ii Q_b) - 2b(\ii \xi, \Lambda Q_b) - \beta (\xi, \ii \Lambda \partial_b Q_b) + 2E[Q_b] + \| Q_b\|_{L^2}^2 + (\dot{\gamma} - 1)\| Q_b\|_{L^2}^2 \right) \\ &\geq s_c \frac{\| Q_b\|_{L^2}^2}{2},
\end{aligned}
\end{equation*}
for $b$ small enough. Furthermore, we use \eqref{eq:prod2} to bound the term 
\begin{equation*}
 \left| (\xi, \Lambda \Psi_b) \right| \lesssim \Gamma_b^{1 - \nu},
\end{equation*}
and the control on the energy \eqref{eq:conEP} to bound
\begin{equation*}
 \left|2\lambda^{2 - 2s_c} E[\psi] \right| \lesssim \Gamma_b^2. 
\end{equation*}
All the rests are bounded using \eqref{eq:mathE} and \eqref{eq:prod1} 
\begin{equation*}
 \left|\mathcal{E}(\xi,\xi) - \frac{1}{\sigma_1 + 1}\int R^{(3)}(\xi)\, dy+(R_3(\xi), \Lambda Q_b) \right| \leq \delta_4(s_c) \int |\xi|^2 e^{-|y|} + |\nabla \xi|^2\, dy
\end{equation*}
for some $\delta_4(s_c) >0$ with $\delta_4(s_c) \to 0$ as $s_c \to 0$. For the last term on the right-hand side of \eqref{eq:bVeryLong}, we notice that using \eqref{eq:Lambdapp} and two of the orthogonality conditions in \eqref{eq:ortho4}, we have
\begin{equation*}
 (\ii \Lambda \xi, \Lambda Q_b) = - (\ii \xi, \Lambda(\Lambda Q_b)) - 2s_c (\ii \xi, \Lambda Q_b) = 0.
\end{equation*}
Moreover, by using \eqref{eq:gxest}, Cauchy-Schwartz inequality and \eqref{eq:conxi}, we obtain
\begin{equation*}
\begin{aligned}
 \left| \left( \ii \frac{\dot{ x}}{\lambda} \cdot \nabla \xi, \Lambda Q_b \right)\right| &\lesssim \| \nabla \xi\|_{L^2} \left(\delta_2(s_c) \left( \int |\nabla \xi|^2 + |\xi|^2 e^{-|y|}\, dy\right)^\frac{1}{2} + \int |\nabla \xi|^2 dy+ \Gamma_b^{1 -11\nu}\right) \\
 &\lesssim \delta_2(s_c) \Gamma_b^{1 - 20\nu} + \Gamma_b^{\frac{3}{2} - 31\nu}.
 \end{aligned}
\end{equation*}
In the same way, we observe that
\begin{equation*}
\begin{aligned}
 - \left( (\dot{ \gamma } -1) \xi , \Lambda Q_b\right) & = - \left( \left( (\dot{ \gamma} - 1) - \frac{1}{\| \Lambda Q_b\|_{L^2}^2} (\xi, \mathcal{L} \Lambda( \Lambda Q_b) \right) \xi , \Lambda Q_b \right) \\ 
 &- \left( \left( \frac{1}{\| \Lambda Q_b\|_{L^2}^2} (\xi, \mathcal{L} \Lambda( \Lambda Q_b) \right) \xi , \Lambda Q_b \right)
 \end{aligned}
\end{equation*}
and we use again \eqref{eq:gxest} to obtain that 
\begin{equation*}
\begin{aligned}
 \left|- \left( \left( (\dot{ \gamma} - 1) - \frac{1}{\| \Lambda Q_b\|_{L^2}^2} (\xi, \mathcal{L} \Lambda( \Lambda Q_b) \right) \xi , \Lambda Q_b \right)\right| \lesssim \Gamma_b.
 \end{aligned}
\end{equation*}
Now we use the coercivity property in Proposition \ref{prp:quadForm}
to obtain that there exists $c>0$ such that
\begin{equation*}
\begin{aligned}
 H(\xi,\xi) - \left( (\dot{ \gamma } -1) \xi , \Lambda Q_b\right) & \geq c \int|\xi|^2e^{-|y|} + |\nabla \xi|^2\, dy\\
 &- c \big( (\xi,Q)^2 + (\xi, |y|^2Q)^2 + (\xi,yQ)^2 \\
 &+ (\xi, \ii DQ)^2 + (\xi, \ii D(DQ))^2 + (\xi, \ii \nabla Q)^2 \big),
 \end{aligned}
\end{equation*}
where $D\xi$ is defined in \eqref{eq:LambdaD}. By using property \eqref{eq:Lambda-D}, \eqref{eq:conxi} and the closeness of $Q_b$ with $Q$, we have that there exists a constant $\delta_5(s_c) >0$ such that
\begin{equation*}
\begin{aligned}
 &(\xi,Q)^2 + (\xi, |y|^2Q)^2 + (\xi,yQ)^2 + (\xi, \ii DQ)^2 + (\xi, \ii D(DQ))^2 + (\xi, \ii \nabla Q)^2 \\
 &\geq(\xi,Q_b)^2 + (\xi, |y|^2Q_b)^2 + (\xi,yQ_b)^2 + (\xi, \ii \Lambda Q_b)^2 + (\xi, \ii \Lambda(\Lambda Q_b))^2 + (\xi, \ii \nabla Q_b)^2 \\
 &- \delta_5(s_c)(s_c + \Gamma_b)
\end{aligned}
\end{equation*}
where $\delta_5(s_c) \to 0$ as $s_c \to 0$. Finally, we use the orthogonality conditions \eqref{eq:ortho4}, inequalities \eqref{eq:enest} and \eqref{eq:momest} to obtain 
\begin{align*}
 \left|(\xi,Q_b)^2 + (\xi, |y|^2Q_b)^2 + (\xi,yQ_b)^2 + (\xi, \ii \Lambda Q_b)^2 + (\xi, \ii \Lambda(\Lambda Q_b))^2 + (\xi, \ii \nabla Q_b)^2 \right|\lesssim \Gamma_b^{\frac{3}{2} - 20\nu}. 
\end{align*}
Thus \eqref{eq:bVeryLong} implies \eqref{eq:virNoEta}.
\end{proof}

\subsection{Appendix B}\label{sec:appenB}

In this appendix we report the computations needed in Lemma \ref{thm:refvirial}, based on \cite[Section $3.3$]{MeRaSz10} 
and \cite[Lemma $6$]{MeRa05} and stated in Lemma \ref{thm:refvirial1} below. We recall that the remainder term $\tilde{\xi}$ in \eqref{eq:deco2} satisfies 
\begin{equation} \label{eq:xi21}
	\begin{split}
		& \ii \partial_\tau \tilde{\xi} + \mathcal{\tilde{L}}\tilde{\xi} + \ii ( \dot{ b }- \beta s_c) \partial_b \QQ - \ii \left( \frac{\dot{ \lambda} }{\lambda} + b \right) \Lambda (\QQ + \tilde{\xi})- \ii\frac{\dot{ x}}{\lambda} \cdot \nabla (\QQ + \tilde{\xi}) \\
 & - (\dot{ \gamma } -1) (\QQ + \tilde{\xi})
	 + \ii \eta \lambda^{2 - 2\frac{\sigma_2}{\sigma_1}} |\QQ + \tilde{\xi}|^{2\sigma_2}(\QQ + \tilde{\xi}) + \tilde{R}(\tilde{\xi}) - \tilde{\Phi}_b + F = 0,
	\end{split}
\end{equation}
where
\begin{align*}
\mathcal{\tilde{L}}\tilde{\xi} = \Delta \tilde{\xi} - \tilde{\xi} + \ii b \Lambda \tilde{\xi} + 2\sigma_1 Re(\tilde{\xi} \overline{\tilde{Q}}_b) |\QQ|^{2\sigma_1 - 2} \QQ + |\QQ|^{2\sigma_1} \tilde{\xi}, 
\end{align*}
\begin{displaymath}
\tilde{R}(\tilde{\xi}) = |\QQ + \tilde{\xi}|^{2\sigma_1}(\QQ + \tilde{\xi}) - |\QQ|^{2\sigma_1} \QQ - 2\sigma_1 Re(\tilde{\xi}\overline{\tilde{Q}}_b) |\QQ|^{2\sigma_1 - 2} \QQ 
- |\QQ|^{2\sigma_1} \tilde{\xi}.
\end{displaymath}
\begin{equation}\label{eq:F1}
F= (\Delta \phi_A) \zeta_b + 2\nabla \phi_A \cdot \nabla \zeta_b + \ii b y \cdot \nabla \phi_A \zeta_b,
\end{equation}
and
\begin{equation}\label{eq:phiTilde1}
 \tilde{\Phi}_b = - \Phi_b - \ii s_c b \ZZ + \ii \beta \partial_b \ZZ + |Q_b + \ZZ|^{2\sigma_1} (Q_b + \ZZ) - |Q_b|^{2\sigma_1} Q_b.
\end{equation}
Here $\Phi_b$ is defined in \eqref{eq:psi+phi}, $\zeta_b$ is the outgoing radiation of Lemma \ref{lem:rad}, $\tilde{\zeta}_b = \phi_A \zeta_b$ is the localized outgoing radiation where $\phi_A$ is a smooth cut-off defined in the beginning of Subsection \ref{sec:outgo}. By construction, $\ZZ$ satisfies the following inequality
\begin{equation}\label{eq:zetaLong}
 \left\|(1 + |y|)^{10} ( |\ZZ| + |\nabla \ZZ|^2 \right\|_{L^2} + \left\|(1 + |y|)^{10} ( |\partial_b \ZZ| + |\nabla \partial_b \ZZ|^2\right\|_{L^2}^2 \leq \Gamma_b^{1 - c\rho},
\end{equation}
where $c\rho \ll 1$ is defined in Lemma \ref{lem:rad}. 
By exploiting equation \eqref{eq:xi21} and reproducing the steps in Lemma \ref{lem:locvirEta}, we obtain the following.

\begin{lem} \label{thm:refvirial1}
Let $\eta = 0$. Then for any $t\in [0,T_1)$, there exists $C>0$ such that 
\begin{equation}\label{eq:refvir1}
\begin{split}
\frac{d}{d\tau}f(\tau) & \geq C \left( \int |\nabla\tilde{\xi}|^2 + |\tilde{ \xi}|^2 e^{-|y|}\, dy+ \Gamma_b \right) - \frac{1}{C} \left( s_c + \int_A^{2A} |\xi|^2\, dy\right), 
\end{split}
\end{equation}
where
\begin{equation}\label{eq:f11}
\begin{aligned}
 f = - \frac{1}{2}Im \left( \int y \cdot \nabla \tilde{Q}_b \overline{\tilde{Q}_b}\, dy\right) - \left( \tilde{\zeta}_b, \ii \Lambda \tilde{Q}_b \right) + (\xi, \ii \Lambda \tilde{\zeta}).
\end{aligned} 
\end{equation}
\end{lem}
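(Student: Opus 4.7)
The plan is to follow the same scheme used in Lemma \ref{lem:locvirEta} for the undamped local virial law, but working with the refined decomposition \eqref{eq:deco2} and the equation \eqref{eq:xi21} satisfied by $\tilde\xi$, so that the outgoing radiation produces the crucial extra positive term $\Gamma_b$ on the right--hand side. Take the scalar product of \eqref{eq:xi21} with $\Lambda \tilde Q_b$. The time derivative term can be written, using $\partial_\tau \tilde Q_b = \dot b\,\partial_b\tilde Q_b$, as
\begin{equation*}
(\ii\partial_\tau\tilde\xi,\Lambda\tilde Q_b)=\frac{d}{d\tau}(\ii\tilde\xi,\Lambda\tilde Q_b)-\dot b\,(\ii\tilde\xi,\Lambda\partial_b\tilde Q_b),
\end{equation*}
and combined with $\dot b\,(\ii\partial_b\tilde Q_b,\Lambda\tilde Q_b)$ it reconstructs, up to lower order corrections of size $s_c$ and $\Gamma_b^{1/2-10\nu}$ that are controlled by \eqref{eq:bLest} and Lemma \ref{lem:vir11} (applied to $\tilde Q_b$), the time derivative of $f$ in \eqref{eq:f11}.

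Second, I would process the algebraic block coming from the linear and nonlinear parts of \eqref{eq:xi21}. By integrating by parts and using the equation \eqref{eq:LambQb} (adapted to $\tilde Q_b$), one rewrites $(\tilde{\mathcal L}\tilde\xi+\tilde R(\tilde\xi),\Lambda\tilde Q_b)$ as $2E[\tilde Q_b]-2\lambda^{2-2s_c}E[\psi]+\mathcal H(\tilde\xi,\tilde\xi)+\mathcal E(\tilde\xi,\tilde\xi)$ plus cubic remainders, in complete analogy with \eqref{eq:vir12}. The Pohozaev--type identity for $\tilde Q_b$ obtained by taking $(\cdot,\Lambda\tilde Q_b)$ of \eqref{eq:QQ} replaces \eqref{eq:poho} and yields
\begin{equation*}
2E[\tilde Q_b]-(\ii\beta s_c\partial_b\tilde Q_b-\tilde\Phi_b+F,\Lambda\tilde Q_b)=s_c(2E[\tilde Q_b]+\|\tilde Q_b\|_{L^2}^2),
\end{equation*}
and the key new point is that $E[\tilde Q_b]$ differs from $E[Q_b]$ by an explicit contribution of order $\Gamma_b$ arising from $\tilde\zeta_b$: using \eqref{eq:zetaLong} and the cancellation structure of \eqref{eq:zetaTilde}, a direct computation (as in \cite{MeRaSz10}, Lemma $3.5$) shows that this contribution is bounded below by $c\,\Gamma_b$, which is precisely the gain that the refined profile provides with respect to the local virial estimate \eqref{eq:virNoEta}.

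Third, the forcing term $F$ is supported in $\{A\leq|y|\leq 2A\}$ by the definition of $\phi_A$, hence by Cauchy--Schwarz and \eqref{eq:F1}, $|(F,\Lambda\tilde Q_b)|\lesssim\int_{A\leq|y|\leq 2A}|\zeta_b||\Lambda\tilde Q_b|\,dy$, which, combined with the decomposition $\zeta_b=\xi-\tilde\xi$ on this annulus (up to a harmless piece of $\tilde\zeta_b$) and the exponential decay of $\Lambda\tilde Q_b$ outside the core, is absorbed by the mass flux term $\int_{A\leq|y|\leq 2A}|\xi|^2\,dy$ plus a harmless $\Gamma_b^{1-c\rho}$ piece. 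The modulation terms in \eqref{eq:xi21} are handled exactly as in Lemma \ref{lem:locvirEta} via \eqref{eq:bLest}--\eqref{eq:gxest}, using that the three orthogonality conditions of $\xi$ in \eqref{eq:ortho4} transfer to $\tilde\xi$ modulo exponentially small corrections thanks to \eqref{eq:zetaLong}.

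Finally, the coercivity bound comes from Proposition \ref{prp:quadForm} applied to $\tilde\xi$: the six negative directions are controlled by \eqref{eq:enest1}, \eqref{eq:momest}, the orthogonality conditions \eqref{eq:ortho4} of $\xi$ (transferred to $\tilde\xi$ up to $\Gamma_b^{1-c\rho}$ errors using $(\tilde\zeta_b,Q_b\text{-directions})\lesssim\Gamma_b^{1-c\rho}$ from \eqref{eq:zetaLong}), and property \eqref{eq:Lambda-D} together with the smallness of $s_c$. Collecting all contributions, one obtains $\dot f\geq C(\int|\nabla\tilde\xi|^2+|\tilde\xi|^2e^{-|y|}\,dy+\Gamma_b)-\frac1C(s_c+\int_{A\leq|y|\leq 2A}|\xi|^2\,dy)$, which is \eqref{eq:refvir1}. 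The main obstacle will be the rigorous extraction of the positive $\Gamma_b$ contribution from $E[\tilde Q_b]$: it requires a careful bookkeeping of the quadratic cross terms $(\zeta_b,Q_b)$ and the boundary behaviour of $\zeta_b$ encoded in \eqref{eq:Gammab}--\eqref{eq:gammaB1}, and it is where the logarithmic tail of the outgoing radiation genuinely enters the estimate.
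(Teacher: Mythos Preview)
Your overall scheme---taking the scalar product of \eqref{eq:xi21} with $\Lambda\tilde Q_b$ and mimicking the proof of Lemma~\ref{lem:locvirEta}---is the same as the paper's, and most of the bookkeeping you describe (time derivative term, modulation terms, coercivity via Proposition~\ref{prp:quadForm} with orthogonality transferred from $\xi$ to $\tilde\xi$ up to $\Gamma_b^{1-c\rho}$ errors) is correct in spirit. However, there is a genuine gap in your argument: you misidentify the origin of both the positive $\Gamma_b$ term and the mass flux term $\int_A^{2A}|\xi|^2\,dy$.

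You claim the positive $\Gamma_b$ arises from $E[\tilde Q_b]$ differing from $E[Q_b]$ by a contribution of order $\Gamma_b$. This cannot work: after the Pohozaev step, $E[\tilde Q_b]$ only survives in \eqref{eq:bVeryLong1} with an $s_c$ prefactor, so any $\Gamma_b$-sized piece of $E[\tilde Q_b]$ would yield at best $s_c\Gamma_b\ll\Gamma_b$. In the paper, the crucial difference between \eqref{eq:bVeryLong1} and \eqref{eq:bVeryLong} is the term $-(\tilde\xi,\Lambda(\tilde\Phi_b-F))$ replacing $-(\xi,\Lambda\Psi_b)$. One bounds $|(\tilde\xi,\Lambda\tilde\Phi_b)|\lesssim\Gamma_b^{1+\nu^4}$ directly from \eqref{eq:phiTilde1} and \eqref{eq:zetaLong}. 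The decisive term is $(\tilde\xi,\Lambda F)$, which one splits as $(\xi,\Lambda F)-(\tilde\zeta_b,\Lambda F)$ using $\tilde\xi=\xi-\tilde\zeta_b$. The positive $\Gamma_b$ then comes from the radiation self-interaction $-(\tilde\zeta_b,DF)\geq C_1\Gamma_b$, obtained from the asymptotic $|y|^d|\zeta_b|^2\sim\Gamma_b$ in \eqref{eq:gammaB1} and the support of $F$ in $\{A\leq|y|\leq 2A\}$. The mass flux term arises from the companion piece $|(\xi,DF)|\lesssim(\int_A^{2A}|\xi|^2)^{1/2}(\int_A^{2A}|F|^2)^{1/2}$, which Young's inequality converts into $\frac{1}{C_2}\int_A^{2A}|\xi|^2+\frac{C_1}{2}\Gamma_b$. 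Your third paragraph attributes both effects to the wrong scalar product $(F,\Lambda\tilde Q_b)$; that term is harmless (it is absorbed by the Pohozaev identity), and the mechanism you need sits entirely in $(\tilde\xi,\Lambda F)$.
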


\begin{proof}
By taking the scalar product of equation \eqref{eq:xi21} with $\Lambda \QQ$, we obtain that 
\begin{equation}\label{eq:bLong1} 
\begin{aligned}
 0 &= (\ii \partial_\tau \tilde{\xi}, \Lambda \QQ)+ \dot{b} (\ii \partial_b \QQ, \Lambda \QQ) + \left(2E[\QQ] - \left( \ii \beta s_c \partial_b \QQ + \tilde{\Phi}_b - F, \Lambda \QQ \right) \right) \\ 
 & +(\tilde{\mathcal{L}} \tilde{\xi} + \tilde{R}(\tilde{\xi}), \Lambda \QQ) - 2E[\QQ] \\
 & - \left( \ii \left( \frac{\dot{ \lambda} }{\lambda} + b \right) \Lambda (\QQ + \tilde{\xi}) + \ii\frac{\dot{ x}}{\lambda} \cdot \nabla \QQ + (\dot{ \gamma } -1) \QQ, \Lambda \QQ\right)\\
 &- \left( \ii \left( \frac{\dot{ \lambda} }{\lambda} + b \right) \Lambda \tilde{\xi} + (\dot{ \gamma } -1) \tilde{\xi} + \ii\frac{\dot{ x}}{\lambda} \cdot \nabla \tilde{\xi}, \Lambda \QQ\right). 
\end{aligned}
\end{equation}
Now we repeat the steps in Lemma \ref{lem:locvirEta}, and arrive to the preliminary estimate 
\begin{equation}\label{eq:bVeryLong1}
\begin{aligned}
 &- \dot{b} \left((\ii \partial_b \QQ, \Lambda \QQ) - (\ii \tilde{\xi}, \Lambda \partial_b \QQ) \right) - \frac{d}{d\tau} (\ii \tilde{\xi}, \Lambda \QQ) \\
 &= s_c\left( - 2b(\ii \tilde{\xi}, \Lambda \QQ) - \beta (\tilde{\xi}, \ii \Lambda \partial_b \QQ) + 2E[\QQ] + \| \QQ\|_{L^2}^2 + (\dot{\gamma} - 1)\| \QQ\|_{L^2}^2 \right) \\
 & - (\tilde{\xi}, \Lambda (\tilde{\Phi}_b - F)) - 2\lambda^{2 - 2s_c} E[\psi] + \mathcal H (\tilde{\xi},\tilde{\xi}) + \mathcal{E}(\tilde{\xi},\tilde{\xi}) + \frac{1}{\sigma_1 + 1}\int R^{(3)}(\tilde{\xi})\, dy\\
 &- \left( \ii \left( \frac{\dot{ \lambda} }{\lambda} + b \right) \Lambda \tilde{\xi} + (\dot{ \gamma } -1) \tilde{\xi} + \ii\frac{\dot{ x}}{\lambda} \cdot \nabla \tilde{\xi}, \Lambda \QQ\right). 
\end{aligned}
\end{equation}
The most important difference between \eqref{eq:bVeryLong1} and \eqref{eq:bVeryLong} is the leading order term $(\tilde{\xi}, \Lambda (\tilde{\Phi}_b - F))$ instead of $(\tilde{\xi}, \Lambda \Psi_b)$. 
First, by integration by parts and by using \eqref{eq:Lambdapp}, we have that
\begin{equation*}
\begin{aligned}
 (\ii \partial_b \QQ, \Lambda \QQ) & = \partial_b (\ii \QQ, \Lambda \QQ) - (\ii \QQ, \Lambda \partial_b \QQ) \\
 &= \partial_b (\ii \QQ, y \cdot \nabla \QQ) + (\ii \Lambda \QQ, \partial_b \QQ) + 2s_c( \ii \QQ, \partial_b \QQ),
\end{aligned}
\end{equation*}
which equivalently implies that
\begin{equation*}
 \begin{aligned}
 - \dot{b} (\ii \partial_b \QQ, \Lambda \QQ) = - \frac{1}{2} \frac{d}{d\tau} (\ii \QQ, y \cdot \nabla \QQ) - \dot b s_c ( \ii \QQ, \partial_b \QQ).
 \end{aligned}
\end{equation*}
Thus we can write the left-hand side of \eqref{eq:bVeryLong1} as 
\begin{equation*}
 \begin{aligned}
 - \dot{b} \left((\ii \partial_b \QQ, \Lambda \QQ) - (\ii \tilde{\xi}, \Lambda \partial_b \QQ) \right) - \frac{d}{d\tau} (\ii \tilde{\xi}, \Lambda \QQ) =
 \frac{d}{d\tau} f(\tau) + \dot b (\ii \tilde{\xi}, \Lambda \partial_b \QQ) - \dot b s_c ( \ii \QQ, \partial_b \QQ)
 \end{aligned}
\end{equation*}
where 
\begin{equation*}
 f(\tau) = - \frac{1}{2} (\ii \QQ, y \cdot \nabla \QQ) - (\ii \tilde{\xi}, \Lambda \QQ) .
\end{equation*}
We notice that by using the estimate on $|\dot b|$ \eqref{eq:bLest}, the smallness of $\ZZ$ \eqref{eq:zetaLong}, \eqref{eq:prod1} and the controls \eqref{eq:conxi}, \eqref{eq:conb} we obtain 
 \begin{equation*}
 \begin{aligned}
 - \dot b (\ii \tilde{\xi}, \Lambda \partial_b \QQ) &=- \dot b (\ii (\xi - \ZZ), \Lambda \partial_b (Q_b + \ZZ)) \\ 
 & \gtrsim \Gamma_{b}^{1 - 20\nu} \left(\left( \int|\nabla \xi|^2 + |\xi|^2 e^{-|y|}\, dy\right)^\frac{1}{2} + \Gamma_b^{1 - c\rho} \right)
 \gtrsim \Gamma_b^{\frac{3}{2} - 31\nu}. 
 \end{aligned}
 \end{equation*}
Next, all the terms on the right-hand side of \eqref{eq:bVeryLong1} are bounded in the same way as in Lemma \ref{lem:locvirEta} except for the scalar product $(\tilde{\xi}, \Lambda (\tilde{\Phi}_b - F))$. In this way, we obtain that there exists a constant $C>0$ such that
\begin{equation}\label{eq:fsmall}
 \begin{aligned}
 \frac{d}{d\tau}f \geq C\int |\tilde{\xi}|^2 e^{-|y|} + |\nabla \tilde{\xi}|^2\, dy- C (s_c + \Gamma_b^{\frac{3}{2} - 50\nu}) - (\tilde{\xi}, \Lambda (\tilde{\Phi}_b - F)).
 \end{aligned}
\end{equation}
and $\Phi_b$ is defined in \eqref{eq:Qb} and $F$ in \eqref{eq:F}. We notice that from the definition of $\tilde{\Phi}_b$ in \eqref{eq:phiTilde} and the bound on $\Phi_b$ in \eqref{eq:psiB} and the control on the outgoing radiation \eqref{eq:zetaLong} we get
\begin{equation*}
 \left\|(1 + |y|^{10})(|\tilde{\Phi}_b| + |\nabla \tilde{\Phi}_b| ) \right\|_{L^2}^2 \leq \Gamma_b^{1 + \nu^4} + s_c,
\end{equation*}
which implies that 
\begin{equation*}
 \left|( \tilde{\xi}, \Lambda \tilde{\Phi}_b) \right| \lesssim \Gamma_b^{1 + \nu^4}.
\end{equation*}
Finally, it remains to study the last term $(\tilde{\xi}, \Lambda F)$ in \eqref{eq:fsmall}. We observe that
\begin{equation*}
 (\tilde{\xi}, \Lambda F) = ( \xi - \tilde{\zeta}_b, \Lambda F) = ( \xi, \Lambda F) - (\tilde{\zeta}_b, \Lambda F).
\end{equation*}
For the second term on the right-hand side of the equation above, we notice that
\begin{equation*}
 - (\tilde{\zeta}_b, \Lambda F) = - (\tilde{\zeta}_b, DF) + s_c(\tilde{\zeta}_b, F)
\end{equation*}
and from \eqref{eq:zetaLong}
\begin{equation*}
 \left| s_c(\tilde{\zeta}_b, F) \right| \leq s_c\Gamma^{1 - c\rho} \leq \Gamma^{\frac{3}{2}}, 
\end{equation*}
while using \eqref{eq:gammaB1}
\begin{equation*}
 - (\tilde{\zeta}_b, DF) \geq C_1 \Gamma_b
\end{equation*}
for some $C_1 >0$. Finally, we use the Young inequality and \eqref{eq:zetaLong} to obtain
\begin{equation*}
\begin{aligned}
 \left|( \xi, D F)\right| &\lesssim \left( \int_A^{2A} |\xi|^2\, dy\right)^\frac{1}{2} \left( \int_A^{2A} |F|^2 \right)^\frac{1}{2} \leq \frac{1}{C_2} \int_A^{2A} |\xi|^2\, dy+ \frac{C_1}{2} \Gamma_b. 
\end{aligned}
\end{equation*}
for some $C_2 >0$. Collecting everything we see that from \eqref{eq:fsmall} we obtain
\begin{equation*}
 \begin{aligned}
 \frac{d}{d\tau}f & \geq C\int |\tilde{\xi}|^2 e^{-|y|} + |\nabla \tilde{\xi}|^2\, dy- C (s_c + \Gamma_b^{1 + \nu^2}) - (\tilde{\xi}, \Lambda (\tilde{\Phi}_b - F)) \\
 & \geq C\int |\tilde{\xi}|^2 e^{-|y|} + |\nabla \tilde{\xi}|^2\, dy+ C_1 \Gamma_b - \frac{C_1}{2} \Gamma_b - C_3\left(s_c + \int_A^{2A} |\xi|^2\, dy\right)
 \end{aligned}
\end{equation*}
for some $C_3 >0$. This is equivalent to \eqref{eq:refvir1}
 \end{proof}

\subsection{Appendix C}

In this appendix, we will report the proof inequality \eqref{eq:nonlinearities_finale} which has already been done in the appendix of \cite{MeRaSz10}.
\begin{proof}
 We give a proof only for the term $N_1(\xi)$, since that for $N_2(\xi)$ is equivalent. We define the function $F: \C \to \C$ as
\begin{displaymath}
F(z) = |z|^{2\sigma_1}z. 
\end{displaymath}
From the definition of $r_1$ \eqref{eq:strichartz_pair} it follows that
\begin{displaymath}
\| |\nabla|^{s} N_1(\hat{\xi}) \|_{L^{r'}} \lesssim \frac{1}{\lambda^{(2\sigma_1 + 1)(\tilde{s} - s_c)}} \| |\nabla|^{s} (F(Q_b+ \xi) - F(Q_b)) \|_{L^{r'}}
\end{displaymath}
where
\begin{equation}\label{eq:def_s}
\tilde{s} = s + \frac{d}{2} - \frac{d}{r}.
\end{equation}
We claim the following estimate:
\begin{equation*}
 \| |\nabla|^{s} (F(Q_b+ \xi) - F(Q_b)) \|_{L^{r'}} \lesssim \| |\nabla|^{\tilde{s}} \xi\|_{L^2}.
\end{equation*}
Indeed observe that 
\begin{equation} \label{eq:nnlin:lagrange}
F(Q_b + \xi) - F(Q_b) = \left( \int_0^1 \partial_z F(Q_b + \tau \xi) d\tau \right) \xi + \left( \int_0^1 \partial_{\bar{z}} F(Q_b + \tau \xi) d\tau \right) \bar{\xi}.
\end{equation}
Both terms on the right-hand side of \eqref{eq:nonlinearities_finale} are treated in the same way. We define $q \in \R$ such that
\begin{equation}\label{eq:def_q}
\frac{1}{q} = \frac{1}{r'} - \frac{1}{r}.
\end{equation}
For any function $h$, by the definition of $r$ \eqref{eq:strichartz_pair}, $\tilde{s}$ \eqref{eq:def_s} and by Sobolev embedding we have
\begin{equation}\label{eq:sobolev}
\| h\|_{L^{2\sigma_1q}} \lesssim \| |\nabla|^{s} h\|_{L^r} \lesssim \||\nabla|^{\tilde{s}} h \|_{L^2}.
\end{equation}
By the fractional Leibniz rule (see for example \cite{Gr09}), it follows that
\begin{align}\label{eq:KP}
\bigg \| |\nabla|^{s} \left( \xi \int_0^1 \partial_z F(Q_b + \tau \xi) d\tau \right) \bigg \|_{L^{r'}} &\lesssim \| |\nabla|^{s} \xi\|_{L^2} \bigg \| \int_0^1 \partial_z F(Q_b + \tau \xi) d\tau \bigg \|_{L^q} \\ \nonumber
& + \| \xi\|_{L^{2\sigma_1 q}} \bigg \| | \nabla |^{s} \int_0^1 \partial_z F(Q_b + \tau \xi) d\tau \bigg \|_{L^u},
\end{align}
where 
\begin{equation}\label{eq:def_u}
\frac{1}{u} = \frac{1}{r'} - \frac{1}{2\sigma_1 q}.
\end{equation}
Then, from \eqref{eq:sobolev}, we have
\begin{align}\label{eq:KP_1}
\bigg \| |\nabla|^{s} \left( \xi \int_0^1 \partial_z F(Q_b + \tau \xi) d\tau \right) \bigg \|_{L^{r'}} &\lesssim \| |\nabla|^{\tilde{s}} \xi\|_{L^2} \bigg ( \int_0^1 \| \partial_z F(Q_b + \tau \xi) \|_{L^q} d\tau\\ \nonumber
& + \int_0^1 \| | \nabla |^{s}\partial_z F(Q_b + \tau \xi)\|_{L^u} d\tau \bigg ),
\end{align}
Now it remains to prove that 
\begin{equation}
\bigg ( \int_0^1 \| \partial_z F(Q_b + \tau \xi) \|_{L^q}d\tau
+ \int_0^1 \| | \nabla |^{s}\partial_z F(Q_b + \tau \xi)\|_{L^u} d\tau \bigg ) \lesssim 1.
\end{equation}
By homogeneity 
\begin{displaymath}
\forall \, \tau \in [0, 1], \ |\partial_z F(Q_b + \tau \xi)|\lesssim |Q_b|^{2\sigma_1} + |\xi|^{2\sigma_1},
\end{displaymath}
and so 
\begin{align*}
\int_0^1 \| \partial_z F(Q_b + \tau \xi) \|_{L^q}d\tau \lesssim \int_0^1 (|Q_b|^{2\sigma_1} + |\xi|_{L^{2\sigma_1}q}^{2\sigma_1}) d\tau \lesssim \int_0^1 (|1 + |\xi|_{L^2}^{2\sigma_1}) d\tau \lesssim 1.
\end{align*}

Moreover, recall \cite{Gr09} the equivalent definition of the homogeneous Besov norm:
\begin{displaymath}
\forall \, 0 < \tilde{s} < 1, \ \| u\|_{\dot{B}^{\tilde{s}}_{q,2}}^2 \sim \int_0^\infty \big ( R^{-\tilde{s}} \sup_{|y|\leq R} | u(. - y) - u(.) |_{L^q} \big )^2 \frac{1}{R} dR.
\end{displaymath}
Recall also that $\| |\nabla|^{\tilde{s}} \psi \|_{L^q} \lesssim \| \psi \|_{\dot{B}^{\tilde{s}}_{q,2}}.$
\newline
Observe that, for $ 1 \leq d \leq 3$, $2\sigma_1 > 2$ and it follows by homogeneity, 
\begin{displaymath}
|\partial_z F(u) - \partial_z F(v) |\lesssim |u - v| (|u|^{2\sigma_1 - 1} + |v|^{2\sigma_1 - 1}).
\end{displaymath}
We define 
\begin{displaymath}
h_\tau = Q_b + \tau \xi, \quad 0 \leq \tau \leq 1.
\end{displaymath}
We estimate from H\"older and \eqref{eq:sobolev} 
\begin{align*}
 |\partial_z F(h_\tau) (. - y) - \partial_z F(h_\tau) (.) \|_{L^u} 
&\lesssim \big \| ((h_\tau)(. - y) - (h_\tau)(.)|\, )( |h_\tau(.-y)|^{2\sigma_1 - 1} + |h_\tau(.)|^{2\sigma_1 + 1} )\big \|_{L^u} \\
& \lesssim \| h_\tau (. - y) - h_\tau (.)\|_{L^r} \| h_\tau\|_{L^{2\sigma_1 q}}^{2\sigma_1 - 1} \\
&\lesssim \| h_\tau (. - y) - h_\tau (.)\|_{L^r} \| |\nabla|^{\tilde{s}}h_\tau\|_{L^2}^{2\sigma_1 - 1},
\end{align*}
and so we have that 
\begin{align*}
 \| |\nabla|^{s} \partial_z F(h_\tau) \|_{L^u} &\lesssim \int_0^\infty \big ( R^{-\tilde{s}} \sup_{|y|\leq R} | \partial_z F(h_\tau) (. - y) - \partial_z F(h_\tau) (.) |_{L^u} \big )^2 \frac{1}{R} dR \\ 
& \lesssim \| |\nabla|^{\tilde{s}}h_\tau\|_{L^2}^{2\sigma_1 - 1} \| |\nabla|^{s} h_\tau \|_{L^r} \lesssim \| |\nabla |^{\tilde{s}} h_\tau \|_{L^2}^{2\sigma_1}, 
\end{align*}
and this concludes the proof of \eqref{eq:nonlinearities_finale}. 
\end{proof}

\bibliographystyle{plain}
\bibliography{supercritical}

\end{document}